\newtheorem{theorem}{Theorem}[section]
\newtheorem{lemma}[theorem]{Lemma}
\newtheorem{proposition}[theorem]{Proposition}
\newtheorem{corollary}[theorem]{Corollary}
\theoremstyle{definition}
\newtheorem{definition}[theorem]{Definition}
\definecolor{A}{rgb}{.75,1,.75}
\numberwithin{equation}{section}
\begin{document}

\title[Iwahori-Matsumoto presentation]{An Iwahori-Matsumoto presentation of affine Yokonuma-Hecke algebras}
\author[Weideng Cui]{Weideng Cui}
\maketitle{\begin{center}
{\it \text{Dedicated to Professor George Lusztig on his seventieth birthday}}
\end{center}}

\begin{abstract}
We first present an Iwahori-Matsumoto presentation of affine Yokonuma-Hecke algebras $\widehat{Y}_{r,n}(q)$ to give a new proof of the fact, which was previously proved by Chlouveraki and S\'{e}cherre, that $\widehat{Y}_{r,n}(q)$ is a particular case of the pro-$p$-Iwahori-Hecke algebras defined by Vign\'eras; meanwhile, we give one application. Using the new presentation, we then give a third presentation of $\widehat{Y}_{r,n}(q),$ from which we immediately get an unexpected result, that is, the extended affine Hecke algebra of type $A$ is a subalgebra of the affine Yokonuma-Hecke algebra.
\end{abstract}



\maketitle
\medskip
\section{Introduction}
\subsection{}
Affine Hecke algebras $\widehat{\mathcal{H}}_{n}(q)$ were introduced by Iwahori and Matsumoto in [IM], in which they constructed an isomorphism between the convolution algebra $\mathbb{C}[I\backslash G(\mathbb{Q}_{p})/I]$ of $I$-bi-invariant compactly supported functions on $G(\mathbb{Q}_{p})$ and the specialization algebra $\widehat{\mathcal{H}}_{n}(q)|_{q=p},$ where $I$ is an Iwahori subgroup of $G(\mathbb{Q}_{p}).$ Later on, Bernstein found a totally different presentation of $\widehat{\mathcal{H}}_{n}(q)$ in terms of an alternative set of generators and relations, which is a $q$-analogue of the presentation of the extended affine Weyl group, as a semi-direct product of the finite Weyl group and a lattice of translations. The representation theory of affine Hecke algebras is very important, and has been studied extensively over the past few decades; see [KL, CG, Xi] and so on.

\subsection{}
When considering the convolution algebra $\mathbb{C}[I(1)\backslash GL_{n}(F)/I(1)]$ of compactly supported functions on $GL_{n}(F)$, where $F$ is a local non-Archimedean field and $I(1)$ is the pro-$p$-radical of an Iwahori subgroup, Vign\'eras [Vi1] introduced the pro-$p$-Iwahori-Hecke algebras. In a recent series of papers [Vi2-4], Vign\'eras defined and studied the pro-$p$-Iwahori-Hecke algebras associated to $p$-adic reductive groups of arbitrary type. In particular, she gave the Iwahori-Matsumoto presentation and Bernstein presentation of them, described their centers, and classified their supersingular simple modules among other things. Pro-$p$-Iwahori-Hecke algebras play an important role in the study of mod-$p$ representations of $p$-adic reductive groups.


\subsection{}
Yokonuma-Hecke algebras were introduced by Yokonuma \cite{Yo} as a centralizer algebra associated to the permutation representation of a finite Chevalley group $G$ with respect to a maximal unipotent subgroup of $G$. In order to study the representations of Yokonuma-Hecke algebras, Chlouveraki and Poulain d'Andecy [ChPA] defined and studied affine Yokonuma-Hecke algebras $\widehat{Y}_{r,n}(q)$. When $q^{2}$ is a power of a prime number $p$ and $r=q^{2}-1,$ one can verify that $\widehat{Y}_{r,n}(q)$ is isomorphic to the specialized pro-$p$-Iwahori-Hecke algebra associated to $GL_{n}(F).$ Later on, Chlouveraki and S\'{e}cherre [ChS] proved that the affine Yokonuma-Hecke algebra is a particular case of the pro-$p$-Iwahori-Hecke algebras by using their Bernstein presentations.

In [CW], we gave the classification of the simple $\widehat{Y}_{r,n}(q)$-modules as well as the classification of the simple modules of the cyclotomic Yokonuma-Hecke algebras over an algebraically closed field $\mathbb{K}$ of characteristic $p$ such that $p$ does not divide $r.$ Moreover, We [C1] and also Poulain d'Andecy [PA] proved that the affine Yokonuma-Hecke algebra is in fact isomorphic to a direct sum of matrix algebras over tensor products of extended affine Hecke algebras of type $A.$ Recently, we [C2] established an explicit categorical equivalence between affine Yokonuma-Hecke algebras and quiver Hecke algebras associated to disjoint copies of quivers of (affine) type $A,$ generalizing Rouquier's categorical equivalence theorem.

\subsection{}
In order to give a Frobenius type formula for the characters of Ariki-Koike algebras, Shoji [S] first defined a variation of the Ariki-Koike algebra, called the modified Ariki-Koike algebra in [SS], as a way of approximating the usual Ariki-Koike algebra. In general the two algebras are not isomorphic, but they are isomorphic if a certain separation condition holds. Later on, Espinoza and Ryom-Hansen [ER] proved that the Yokonuma-Hecke algebra is isomorphic to the modified Ariki-Koike algebra. Thus, they gave a new proof of the isomorphism theorem for Yokonuma-Hecke algebras, previously proved by Lusztig [Lu2] and also by Jacon-Poulain d'Andecy [JaPA], by using the isomorphism theorem for modified Ariki-Koike algebras established by Sawada-Shoji [SS] and independently by Hu-Stoll [HS].

\subsection{}
In this paper, in Section 2, we first present an Iwahori-Matsumoto presentation of affine Yokonuma-Hecke algebras $\widehat{Y}_{r,n}(q)$ to give a new proof of the fact, which was previously proved by Chlouveraki and S\'{e}cherre, that $\widehat{Y}_{r,n}(q)$ is a particular case of the pro-$p$-Iwahori-Hecke algebras defined by Vign\'eras; meanwhile, we develop one application, that is, we follow Jacon-Poulain d'Andecy's approach in [JaPA] to give a new proof of the isomorphism theorem for affine Yokonuma-Hecke algebras. In Section 3, we then introduce a third presentation of the affine Yokonuma-Hecke algebra using its Iwahori-Matsumoto presentation, from which we immediately obtain an unexpected result, that is, the extended affine Hecke algebra of type $A$ is a subalgebra of the affine Yokonuma-Hecke algebra.

This is the first paper of a series. In two papers [C3] and [C4] which will come very soon, we shall define and study two types of affine Yokonuma-Schur algebras associated to the two presentations of affine Yokonuma-Hecke algebras given in this paper; for example, we shall define the standard bases and canonical bases of them and classify the simple modules of them among other things.

\section{An Iwahori-Matsumoto presentation of $\widehat{Y}_{r,n}(q)$}

\subsection{Affine Yokonuma-Hecke algebras}
Let $r, n\in \mathbb{N},$ $r, n\geq1.$ Let $q$ be an indeterminate and let $\mathcal{R}=\mathbb{Z}[\frac{1}{r}][q,q^{-1}].$

\begin{definition}
The affine Yokonuma-Hecke algebra, denoted by $\widehat{Y}_{r,n}=\widehat{Y}_{r,n}(q)$, is an $\mathcal{R}$-associative algebra generated by the elements $\bar{t}_{1},\ldots,\bar{t}_{n},g_{1},\ldots,g_{n-1},X_{1}^{\pm1},$ in which the generators $\bar{t}_{1},\ldots,\bar{t}_{n},g_{1},$ $\ldots,g_{n-1}$ satisfy the following relations:
\begin{align}
g_ig_j&=g_jg_i \qquad \qquad\qquad\quad\hspace{0.3cm}\mbox{for all $i,j=1,\ldots,n-1$ such that $\vert i-j\vert \geq 2$;}\label{rel-def-Y1}\\[0.1em]
g_ig_{i+1}g_i&=g_{i+1}g_ig_{i+1} \qquad \quad\qquad\hspace{0.05cm}\mbox{for all $i=1,\ldots,n-2$;}\label{rel-def-Y2}\\[0.1em]
\bar{t}_i\bar{t}_j&=\bar{t}_j\bar{t}_i \qquad\qquad\qquad\qquad  \mbox{for all $i,j=1,\ldots,n$;}\label{rel-def-Y3}\\[0.1em]
g_i\bar{t}_j&=\bar{t}_{s_i(j)}g_i \quad \quad\qquad\qquad\hspace{0.25cm}\mbox{for all $i=1,\ldots,n-1$ and $j=1,\ldots,n$;}\label{rel-def-Y4}\\[0.1em]
\bar{t}_i^r&=1 \quad \qquad\qquad\qquad\qquad\mbox{for all $i=1,\ldots,n$;}\label{rel-def-Y5}\\[0.2em]
g_{i}^{2}&=1+(q-q^{-1})\bar{e}_{i}g_{i} \quad \hspace{0.48cm}\mbox{for all $i=1,\ldots,n-1$;}\label{rel-def-Y6}
\end{align}
where $s_{i}$ is the transposition $(i,i+1)$, and for each $1\leq i\leq n-1$,
$$\bar{e}_{i} :=\frac{1}{r}\sum\limits_{s=0}^{r-1}\bar{t}_{i}^{s}\bar{t}_{i+1}^{-s},$$
together with the following relations concerning the generators $X_{1}^{\pm1}$:
\begin{align}
X_{1}X_{1}^{-1}&=X_{1}^{-1}X_{1}=1;\label{rel-def-Y7}\\[0.1em]
g_{1}X_{1}g_{1}X_{1}&=X_{1}g_{1}X_{1}g_{1};\label{rel-def-Y8}\\[0.1em]
g_{i}X_{1}&=X_{1}g_{i} \qquad \qquad \quad\mbox{for all $i=2,\ldots,n-1$;}\label{rel-def-Y9}\\[0.1em]
\bar{t}_{j}X_{1}&=X_{1}\bar{t}_{j} \qquad \qquad\quad\mbox{for all $j=1,\ldots,n$.}\label{rel-def-Y10}
\end{align}
\end{definition}

By definition, we see that the elements $\bar{e}_{i}$'s are idempotents in $\widehat{Y}_{r,n},$ and the elements $g_{i}$'s are invertible with the inverse given by
\begin{equation}\label{inverse}
g_{i}^{-1}=g_{i}-(q-q^{-1})\bar{e}_{i}\quad\mbox{for~all}~i=1,\ldots,n-1.
\end{equation}

For each $w\in \mathfrak{S}_{n},$ let $w=s_{i_1}\cdots s_{i_{r}}$ be a reduced expression of $w.$ By Matsumoto's lemma, the element $g_{w} :=g_{i_1}g_{i_2}\cdots g_{i_{r}}$ does not depend on the choice of the reduced expression of $w$.

Let $i, k\in \{1,2,\ldots,n\}$ and set
\begin{equation}
\bar{e}_{i,k} :=\frac{1}{r}\sum\limits_{s=0}^{r-1}\bar{t}_{i}^{s}\bar{t}_{k}^{-s}.
\end{equation}
Note that $\bar{e}_{i,i}=1,$ $\bar{e}_{i,k}=\bar{e}_{k,i},$ and that $\bar{e}_{i,i+1}=\bar{e}_{i}.$ It can be easily checked that the following holds:
\begin{equation}\label{egge}
g_{i}\bar{e}_{j,k}=\bar{e}_{s_{i}(j),s_{i}(k)}g_{i}\quad\mbox{for $i=1,\ldots,n-1$ and $j,k=1,\ldots,n$}.
\end{equation}
In particular, we have $g_{i}\bar{e}_{i}=\bar{e}_{i}g_{i}$ for all $i=1,\ldots,n-1.$

We define the elements $X_{2},\ldots,X_{n}$ in $\widehat{Y}_{r,n}$ by induction:
\begin{equation}
X_{i+1} :=g_{i}X_{i}g_{i}\quad\mathrm{for}~i=1,\ldots,n-1.\label{X2n}
\end{equation}
Then it is proved in [ChPA, Lemma 1] that we have, for any $1\leq i\leq n-1$,
\begin{equation}
g_{i}X_{j}=X_{j}g_{i}\quad\mathrm{for}~j=1,2,\ldots,n~\mathrm{such~that}~j\neq i, i+1.\label{giXj}
\end{equation}
Moreover, by [ChPA, Proposition 1], we have that the elements $\bar{t}_{1},\ldots, \bar{t}_{n}, X_{1},\ldots, X_{n}$ form a commutative family, that is,
\begin{equation}
xy=yx\quad\mathrm{for~any}~x,y\in \{\bar{t}_{1},\ldots, \bar{t}_{n}, X_{1},\ldots, X_{n}\}.\label{xyyx}
\end{equation}

\subsection{An Iwahori-Matsumoto presentation}
Let $\widehat{W}$ be the extended affine Weyl group of type $A$, which is generated by $\rho,$ $s_{i}, 0\leq i\leq n-1$ satisfying the relations:
\begin{align}
\label{ECoxeter1}
s_{i}^{2}&=1,
\qquad\qquad\qquad\hspace{1.5mm}
s_{i}s_{j}=s_{j}s_{i}\hspace{3mm} \text{if $i-j\not\equiv \pm 1$ (mod $n$)};\\
\label{ECoxeter2}
\rho s_{\overline{i}}&=s_{\overline{i-1}}\rho,
\qquad\quad
s_{\overline{i}}s_{\overline{i+1}}s_{\overline{i}}=s_{\overline{i+1}}s_{\overline{i}}s_{\overline{i+1}}\hspace{3mm} \text{for $0\leq i\leq n-1$},
\end{align}
where $\overline{i}\in \{0,1,\ldots, n-1\}$ with $\overline{i}\equiv i\text{ (mod }n).$

Set $\mathfrak{X} :=\mathbb{Z}^{n},$ which is identified with a free abelian group generated by $X_{1},\ldots,X_{n}$ such that each element of $\mathfrak{X}$ can be written in the form $X^{\lambda} :=X_{1}^{\lambda_{1}}\cdots X_{n}^{\lambda_{n}}$ for $\lambda=(\lambda_{1},\ldots,\lambda_{n})\in \mathbb{Z}^{n}.$ The following lemma is well-known.
\begin{lemma}\label{coxeter-lemma-iso1}
We have an isomorphism of groups $\widehat{W}\cong \mathfrak{X}\rtimes \mathfrak{S}_{n},$ which is given by
\begin{align}
s_{i}&\mapsto s_{i}\quad\text{ for }1\leq i\leq n-1,\notag\\
s_{0}&\mapsto s_{n-1}\cdots s_{2}s_{1}s_{2}\cdots s_{n-1}X_{1}X_{n}^{-1},\label{coxeter-lemma-iso2}\\
\rho &\mapsto s_{n-1}\cdots s_{1}X_{1}.\notag
\end{align}
Its inverse sends $X_{1}$ to $s_{1}\cdots s_{n-1}\rho$ and $s_{i}$ to $s_{i}$ for $1\leq i\leq n-1.$
\end{lemma}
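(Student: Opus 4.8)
The plan is to define a group homomorphism $\phi\colon \widehat{W}\to \mathfrak{X}\rtimes\mathfrak{S}_{n}$ by the formulas in \eqref{coxeter-lemma-iso2}, to verify that the images satisfy the defining relations \eqref{ECoxeter1}--\eqref{ECoxeter2} of $\widehat{W}$, and then to exhibit a two-sided inverse. First I would rewrite the permutation parts of the images in transparent form: reading the given reduced words as permutations, $s_{n-1}\cdots s_{2}s_{1}s_{2}\cdots s_{n-1}=(1,n)$, and $c:=s_{n-1}\cdots s_{2}s_{1}$ is the cyclic permutation with $c(i)=i-1$ for $2\le i\le n$ and $c(1)=n$. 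Hence in $\mathfrak{X}\rtimes\mathfrak{S}_{n}$ the images are $\phi(s_{i})=s_{i}$ for $1\le i\le n-1$, $\phi(s_{0})=(1,n)\,X_{1}X_{n}^{-1}$, and $\phi(\rho)=c\,X_{1}$.

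The relations in \eqref{ECoxeter1}--\eqref{ECoxeter2} involving only $s_{1},\dots,s_{n-1}$ hold automatically, since these generators land in the $\mathfrak{S}_{n}$-factor. The content is in the relations involving $s_{0}$ and $\rho$, which I would check by direct computation in the semidirect product using the conjugation rule $w X^{\lambda} w^{-1}=X^{w(\lambda)}$ and the commutativity of translations. For $2\le i\le n-1$ one has $\phi(\rho)\phi(s_{i})\phi(\rho)^{-1}=c\,s_{i}\,c^{-1}=s_{i-1}$, because $X_{1}$ commutes with $s_{i}$ for $i\ge2$. The boundary cases $i=1$ and $i=0$ are the delicate ones: there $X_{1}$ no longer commutes past $s_{i}$, so the translation part must be tracked, and the identities $c\,(X_{1}s_{1}X_{1}^{-1})\,c^{-1}=(1,n)X_{1}X_{n}^{-1}=\phi(s_{0})$ and $\phi(\rho)\phi(s_{0})\phi(\rho)^{-1}$, which reduces to $c\,(1,n)\,c^{-1}=s_{n-1}$, realize $\rho s_{\overline{i}}=s_{\overline{i-1}}\rho$. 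This bookkeeping of translation parts is the main obstacle; the remaining checks ($\phi(s_{0})^{2}=1$, the commutation in \eqref{ECoxeter1} for $s_{0}$, and the braid relations $s_{0}s_{1}s_{0}=s_{1}s_{0}s_{1}$ and $s_{n-1}s_{0}s_{n-1}=s_{0}s_{n-1}s_{0}$) are of the same type and follow once the coefficient $(1,n)X_{1}X_{n}^{-1}$ is handled consistently.

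For the inverse, I would use that $\mathfrak{X}\rtimes\mathfrak{S}_{n}$ is generated by $X_{1},s_{1},\dots,s_{n-1}$, since $X_{i+1}=s_{i}X_{i}s_{i}$ produces all remaining translations; a presentation on these generators consists of the $\mathfrak{S}_{n}$-relations, the commutations $s_{i}X_{1}=X_{1}s_{i}$ for $i\ge2$, and the commutativity of the translations, whose nonformal instance is $X_{1}X_{2}=X_{2}X_{1}$. I then set $\psi\colon\mathfrak{X}\rtimes\mathfrak{S}_{n}\to\widehat{W}$, $\psi(s_{i})=s_{i}$ and $\psi(X_{1})=s_{1}\cdots s_{n-1}\rho$, and check that $\psi$ respects these relations, the only nonformal verification being that $\psi(X_{1})$ commutes with $s_{1}\psi(X_{1})s_{1}$. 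Finally, since $s_{0}=\rho s_{1}\rho^{-1}$ gives $\widehat{W}=\langle s_{1},\dots,s_{n-1},\rho\rangle$, it suffices to check that $\phi$ and $\psi$ are mutually inverse on generators; this is immediate and collapses telescopically, as $\psi(\phi(\rho))=(s_{n-1}\cdots s_{1})(s_{1}\cdots s_{n-1})\rho=\rho$ and $\phi(\psi(X_{1}))=(s_{1}\cdots s_{n-1})(s_{n-1}\cdots s_{1})X_{1}=X_{1}$, with $\phi\circ\psi=\mathrm{id}$ and $\psi\circ\phi=\mathrm{id}$ on the remaining $s_{i}$ being trivial. This establishes the isomorphism with the stated inverse.
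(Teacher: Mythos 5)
The paper offers no proof of this lemma at all --- it is stated as ``well-known'' and used without justification --- so there is no argument of the author's to compare yours against. Your direct verification is sound and is the standard way to establish this fact. The computations you do display are correct: $s_{n-1}\cdots s_{1}\cdots s_{n-1}=(1,n)$, $c=s_{n-1}\cdots s_{1}$ acts by $i\mapsto i-1$ cyclically, the conjugation $\phi(\rho)\phi(s_{i})\phi(\rho)^{-1}=s_{i-1}$ for $2\le i\le n-1$, the boundary identities $c\,(X_{1}s_{1}X_{1}^{-1})\,c^{-1}=X_{n}X_{1}^{-1}(1,n)=(1,n)X_{1}X_{n}^{-1}$ and $c\,(1,n)\,c^{-1}=s_{n-1}$, and the telescoping of $\psi\circ\phi$ and $\phi\circ\psi$ on generating sets all check out. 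Two points are deferred rather than carried out, and both deserve at least a line each if this were written in full: (i) the completeness of the presentation of $\mathfrak{X}\rtimes\mathfrak{S}_{n}$ on $X_{1}^{\pm1},s_{1},\ldots,s_{n-1}$ (one must argue that the stated relations already force all $X_{i}X_{j}=X_{j}X_{i}$ and $wX_{j}w^{-1}=X_{w(j)}$, hence a normal form $X^{\lambda}w$); and (ii) the one genuinely non-formal relation for $\psi$, namely that $u:=s_{1}\cdots s_{n-1}\rho$ commutes with $s_{1}us_{1}$, which after pushing $\rho$ to the right reduces to the identity $(s_{1}\cdots s_{n-1})^{2}=(s_{2}\cdots s_{n-1})(s_{1}\cdots s_{n-2})$ in $\mathfrak{S}_{n}$ --- true, but worth recording since it is exactly where the relation $\rho s_{0}=s_{n-1}\rho$ enters. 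Neither omission is a gap in the sense of a wrong or unfixable step; both are routine. (One could also note that, as in the paper's Definition 2.4, the braid relations in \eqref{ECoxeter2} should be read with the implicit caveat $n\ge3$.)
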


Let $\mathcal{T}=(\mathbb{Z}/r\mathbb{Z})^{n},$ which is a commutative group generated by $\bar{t}_{1},\ldots,\bar{t}_{n}$ with relations:
\begin{align*}
\bar{t}_{i}\bar{t}_{j}&=\bar{t}_{j}\bar{t}_{i}\qquad\mathrm{for ~all}~i,j=1,2,\ldots,n,\\
\bar{t}_{i}^{r}&=1\qquad\quad\mathrm{for ~all}~i=1,2,\ldots,n.
\end{align*}
We can write each element of $\mathcal{T}$ as $\bar{t}^{\beta}=\bar{t}_{1}^{\beta_{1}}\cdots \bar{t}_{n}^{\beta_{n}}$ for $\beta=(\beta_{1},\ldots,\beta_{n})$ with each $0\leq \beta_{i}\leq r-1.$ We consider the semi-direct product $\widetilde{W}_{r,n} :=(\mathcal{T}\times \mathfrak{X})\rtimes \mathfrak{S}_{n},$ in which every element can be written as $\bar{t}^{\beta}X^{\lambda}\sigma.$ Note that $\bar{t}^{\beta}$ and $X^{\lambda}$ commute with each other.

We also define a group $\widehat{W}_{r,n}$, which is generated by $t_{j},$ $1\leq j\leq n,$ $\rho,$ $s_{i}, 0\leq i\leq n-1$ satisfying the following relations:
\begin{align}
\label{ECoxeter1-1}
s_{i}^{2}&=1,
\qquad\qquad\qquad\hspace{1.5mm}
s_{i}s_{j}=s_{j}s_{i}\hspace{3mm} \text{if $i-j\not\equiv \pm 1$ (mod $n$)};\\
\label{ECoxeter2-2}
\rho s_{\overline{i}}&=s_{\overline{i-1}}\rho,
\qquad\quad
s_{\overline{i}}s_{\overline{i+1}}s_{\overline{i}}=s_{\overline{i+1}}s_{\overline{i}}s_{\overline{i+1}}\hspace{3mm} \text{for $0\leq i\leq n-1$};\\
\label{ECoxeter3-3}
t_{i}^{r}&=1,\qquad\quad \hspace{14.5mm} t_{i}t_{j}=t_{j}t_{i} \hspace{3mm} \text{ for }1\leq i,j\leq n;\\
\label{ECoxeter4-4}
s_{i}t_{j}&=t_{s_{i}(j)}s_{i}\qquad \quad\hspace{8mm}\rho t_{j}=t_{j-1}\rho \hspace{3mm} \text{for $1\leq i\leq n-1$},
\end{align}
where we set $t_{0} :=t_{n}.$

By generalizing Lemma \ref{coxeter-lemma-iso1}, we can easily get the following result.
\begin{lemma}\label{coxeter-lemma-iso2}
We have an isomorphism of groups $\widehat{W}_{r,n}\cong \widetilde{W}_{r,n}.$
\end{lemma}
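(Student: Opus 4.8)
The plan is to realize both groups as the same semidirect product $\mathcal{T}\rtimes\widehat{W}$ and then make the identification precise through explicit mutually inverse homomorphisms extending the isomorphism of Lemma~\ref{coxeter-lemma-iso1}. Conceptually, since $\mathcal{T}$ and $\mathfrak{X}$ commute in $\widetilde{W}_{r,n}=(\mathcal{T}\times\mathfrak{X})\rtimes\mathfrak{S}_{n}$ and $\mathfrak{S}_{n}$ normalizes each factor, one may regroup $\widetilde{W}_{r,n}\cong\mathcal{T}\rtimes(\mathfrak{X}\rtimes\mathfrak{S}_{n})=\mathcal{T}\rtimes\widehat{W}$, where $\widehat{W}$ acts on $\mathcal{T}=(\mathbb{Z}/r\mathbb{Z})^{n}$ through the projection $\widehat{W}\to\widehat{W}/\mathfrak{X}\cong\mathfrak{S}_{n}$ and the permutation of coordinates. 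On the other side, relations \eqref{ECoxeter1-1}--\eqref{ECoxeter2-2} exhibit a copy of $\widehat{W}$ inside $\widehat{W}_{r,n}$, relation \eqref{ECoxeter3-3} a copy of $\mathcal{T}$, and \eqref{ECoxeter4-4} prescribes a conjugation action of the $\rho,s_{i}$ on the $t_{j}$; so $\widehat{W}_{r,n}$ is also of the form $\mathcal{T}\rtimes\widehat{W}$. The whole content of the lemma is that these two $\widehat{W}$-actions on $\mathcal{T}$ agree, which I will verify by an explicit computation of permutations.

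Concretely, first I would define $\phi\colon\widehat{W}_{r,n}\to\widetilde{W}_{r,n}$ on generators by $t_{j}\mapsto\bar{t}_{j}$, by $s_{i}\mapsto s_{i}$ for $1\le i\le n-1$, and by sending $s_{0}$ and $\rho$ to the elements $s_{n-1}\cdots s_{2}s_{1}s_{2}\cdots s_{n-1}X_{1}X_{n}^{-1}$ and $s_{n-1}\cdots s_{1}X_{1}$ prescribed by Lemma~\ref{coxeter-lemma-iso1}. To check that $\phi$ is well defined I would verify the four families of defining relations. Relations \eqref{ECoxeter1-1}--\eqref{ECoxeter2-2} among the $\rho,s_{i}$ hold because their images are exactly those of Lemma~\ref{coxeter-lemma-iso1}, which already gives a homomorphism $\widehat{W}\to\mathfrak{X}\rtimes\mathfrak{S}_{n}$; relations \eqref{ECoxeter3-3} hold since the $\bar{t}_{j}$ are the defining generators of $\mathcal{T}$; and in \eqref{ECoxeter4-4} the relation $s_{i}t_{j}=t_{s_{i}(j)}s_{i}$ is immediate from the permutation action in $\widetilde{W}_{r,n}$. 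The only relation requiring work is $\rho t_{j}=t_{j-1}\rho$: since $X_{1}\in\mathfrak{X}$ commutes with every $\bar{t}_{j}$, this reduces to showing that the permutation $s_{n-1}\cdots s_{1}$ sends the index $j$ to $j-1$ cyclically (with $t_{0}=t_{n}$), a short direct calculation.

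The inverse $\psi\colon\widetilde{W}_{r,n}\to\widehat{W}_{r,n}$ I would define by $\bar{t}_{j}\mapsto t_{j}$, $s_{i}\mapsto s_{i}$, and $X_{1}\mapsto s_{1}\cdots s_{n-1}\rho$, as in the inverse of Lemma~\ref{coxeter-lemma-iso1}. Checking that $\psi$ is well defined splits the same way: the relations of $\mathfrak{X}\rtimes\mathfrak{S}_{n}=\widehat{W}$ and of $\mathcal{T}$ are handled by Lemma~\ref{coxeter-lemma-iso1} and \eqref{ECoxeter3-3}, and the permutation relation $s_{i}\bar{t}_{j}=\bar{t}_{s_{i}(j)}s_{i}$ maps to \eqref{ECoxeter4-4}. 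Here the crux --- and the step I expect to be the main obstacle --- is to confirm that the defining direct-product relation $\bar{t}_{j}X_{1}=X_{1}\bar{t}_{j}$ is preserved, i.e. that $\psi(X_{1})=s_{1}\cdots s_{n-1}\rho$ commutes with every $t_{j}$ in $\widehat{W}_{r,n}$. Using $\rho t_{j}=t_{j-1}\rho$ together with the computation that $s_{1}\cdots s_{n-1}$ acts on indices by $j\mapsto j+1$ cyclically, one finds that $s_{1}\cdots s_{n-1}\rho$ acts trivially on the $t_{j}$, so the relation holds. Once both maps are well defined, $\phi$ and $\psi$ are mutually inverse because they invert each other on the generators $t_{j},\bar{t}_{j},s_{i}$ by construction and on $\rho,X_{1}$ by the corresponding statement in Lemma~\ref{coxeter-lemma-iso1}, completing the proof.
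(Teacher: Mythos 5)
Your proof is correct and is essentially the paper's argument: the paper disposes of this lemma with the single line ``By generalizing Lemma \ref{coxeter-lemma-iso1}, we can easily get the following result,'' and what you have written is precisely that generalization carried out in detail --- extending the mutually inverse maps of Lemma \ref{coxeter-lemma-iso1} by $t_{j}\leftrightarrow\bar{t}_{j}$ and verifying the cross relations, the only genuinely new ones being $\rho t_{j}=t_{j-1}\rho$ and $\bar{t}_{j}X_{1}=X_{1}\bar{t}_{j}$, both of which your cyclic-permutation computations handle correctly. No discrepancy with the paper.
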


\begin{definition}\label{Definition 2-4}
We define an $\mathcal{R}$-associative algebra $\widehat{H}_{r,n}^{\mathrm{aff}}$ generated by the elements $t_{1},\ldots,t_{n},$ $T_{s_{0}},\ldots,T_{s_{n-1}},$ $T_{\rho}^{\pm 1}$ with the following relations:
\begin{align}
t_i^r&=1 \quad \qquad\qquad\qquad\qquad\mbox{for all $1\leq i\leq n$;}\label{rel-def-Heckealg1}\\[0.1em]
t_it_j&=t_jt_i \qquad\qquad\qquad\qquad\hspace{0.4mm}  \mbox{for all $1\leq i, j\leq n$;}\label{rel-def-Heckealg2}\\[0.1em]
T_{s_{i}}t_j&=t_{s_i(j)}T_{s_{i}} \quad \quad\qquad\qquad\hspace{1mm}\mbox{for all $1\leq i\leq n-1$ and $1\leq j\leq n$;}\label{rel-def-Heckealg3}\\[0.1em]
T_{\rho}t_j&=t_{j-1}T_{\rho} \quad \quad\qquad\qquad\hspace{3.1mm}\mbox{for all $1\leq j\leq n$;}\label{rel-def-Heckealg4}\\[0.1em]
T_{\rho}T_{s_{\overline{i}}}&=T_{s_{\overline{i-1}}}T_{\rho}  \quad \quad\qquad\qquad\hspace{1.8mm}\mbox{for all $0\leq i\leq n-1 $;}\label{rel-def-Heckealg5}\\[0.1em]
T_{s_{i}}T_{s_{j}}&=T_{s_{j}}T_{s_{i}}\quad \quad\qquad\qquad\hspace{4.15mm}  \text{if $i-j\not\equiv \pm 1$ (mod $n$)};\label{rel-def-Heckealg6}\\[0.1em]
T_{s_{\overline{i}}}T_{s_{\overline{i+1}}}T_{s_{\overline{i}}}&=T_{s_{\overline{i+1}}}T_{s_{\overline{i}}}T_{s_{\overline{i+1}}}\quad \quad\qquad \hspace{0.8mm}  \text{if $0\leq i\leq n-1$ and $n\geq 3$};\label{rel-def-Heckealg7}\\[0.1em]
T_{s_{i}}^{2}&=1+(q-q^{-1})e_{i}T_{s_{i}} \quad \hspace{3.8mm}\mbox{for all $0\leq i\leq n-1$;}\label{rel-def-Heckealg8}\\[0.1em]
T_{\rho}T_{\rho}^{-1}&=T_{\rho}^{-1}T_{\rho}=1,\label{rel-def-Heckealg4-com}
\end{align}
where $t_{0} :=t_{n}$ and for each $0\leq i\leq n-1$,
$$e_{i} :=\frac{1}{r}\sum\limits_{s=0}^{r-1}t_{i}^{s}t_{i+1}^{-s}.$$
\end{definition}

We now state the main result of this section, which can be regarded as a generalization of the isomorphism theorem between the Iwahori-Matsumoto presentation and Bernstein presentation of an extended affine Hecke algebra of type $A$.
\begin{theorem}\label{iwahori-matsumoto-pre}
We have an $\mathcal{R}$-algebra isomorphism $\Phi :\widehat{H}_{r,n}^{\mathrm{aff}}\rightarrow \widehat{Y}_{r,n}$ given by
\[
  \Phi: \quad   \begin{array}{ccc}
t_{j} &\longmapsto &\hspace{-6mm}\bar{t}_{j}\qquad\qquad\hspace{5mm}\text{ for }1\leq j\leq n, \\
T_{s_{i}} &\longmapsto &g_{i}\quad\qquad\hspace{9mm}\text{ for }1\leq i\leq n-1,\\
T_{s_{0}} &\longmapsto &X_{1}^{-1}X_{n}(g_{n-1}\cdots g_{2}g_{1}g_{2}\cdots g_{n-1})^{-1},\\
T_{\rho}  &\longmapsto &\hspace{-3.5cm}g_{n-1}\cdots g_{1}X_{1},\\
T_{\rho}^{-1}  &\longmapsto &\hspace{-3.5cm}X_{1}^{-1}g_{1}^{-1}\cdots g_{n-1}^{-1}
\end{array}
\]
with the inverse $\Psi :\widehat{Y}_{r,n}\rightarrow \widehat{H}_{r,n}^{\mathrm{aff}}$ defined by
\[
  \Psi: \quad   \begin{array}{ccc}
\bar{t}_{j} &\longmapsto &\hspace{-6mm}t_{j}\qquad\qquad\hspace{5mm}\text{ for }1\leq j\leq n, \\
g_{i} &\longmapsto &\hspace{-1mm}T_{s_{i}}\quad\qquad\hspace{7.5mm}\text{ for }1\leq i\leq n-1,\\
X_{1}  &\longmapsto &\hspace{-3cm}T_{s_{1}}^{-1}\cdots T_{s_{n-1}}^{-1}T_{\rho},\\
X_{1}^{-1}  &\longmapsto &\hspace{-3cm}T_{\rho}^{-1}T_{s_{n-1}}\cdots T_{s_{1}}.
\end{array}
\]
\end{theorem}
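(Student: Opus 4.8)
The plan is to prove that $\Phi$ is an $\mathcal{R}$-algebra isomorphism in three steps: first I would check that the images under $\Phi$ of the generators of $\widehat{H}_{r,n}^{\mathrm{aff}}$ satisfy all the defining relations \eqref{rel-def-Heckealg1}--\eqref{rel-def-Heckealg4-com}, so that $\Phi$ is a well-defined homomorphism; then I would verify the analogous statement for $\Psi$ against \eqref{rel-def-Y1}--\eqref{rel-def-Y10}; and finally I would check that $\Phi$ and $\Psi$ are mutually inverse, which it suffices to do on generators since both are algebra maps.

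For the well-definedness of $\Phi$, the relations \eqref{rel-def-Heckealg1}--\eqref{rel-def-Heckealg3} and the cases $1\le i\le n-1$ of \eqref{rel-def-Heckealg6}, \eqref{rel-def-Heckealg7}, \eqref{rel-def-Heckealg8} are transported verbatim from \eqref{rel-def-Y5}, \eqref{rel-def-Y3}, \eqref{rel-def-Y4}, \eqref{rel-def-Y1}, \eqref{rel-def-Y2}, \eqref{rel-def-Y6}, since there $T_{s_i}\mapsto g_i$, $t_j\mapsto\bar t_j$, $e_i\mapsto\bar e_i$. Two permutation facts organize the rest: $w_\rho=s_{n-1}\cdots s_1$ is the $n$-cycle with $w_\rho(j)=j-1$ for $2\le j\le n$ and $w_\rho(1)=n$, while $w_0=s_{n-1}\cdots s_2s_1s_2\cdots s_{n-1}$ is the transposition $(1,n)$; accordingly $g_{n-1}\cdots g_1=g_{w_\rho}$ and $\tau:=g_{n-1}\cdots g_2g_1g_2\cdots g_{n-1}=g_{w_0}$, so that $\Phi(T_\rho)=g_{w_\rho}X_1$ and $\Phi(T_{s_0})=X_1^{-1}X_n\tau^{-1}$. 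Relation \eqref{rel-def-Heckealg4} then follows from \eqref{rel-def-Y4} (which gives $g_{w_\rho}\bar t_j=\bar t_{j-1}g_{w_\rho}$) together with \eqref{rel-def-Y10}, and the cases $2\le i\le n-1$ of \eqref{rel-def-Heckealg5} follow from the braid relations \eqref{rel-def-Y1}, \eqref{rel-def-Y2} (giving $g_{w_\rho}g_i=g_{i-1}g_{w_\rho}$) combined with \eqref{rel-def-Y9}.

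The genuinely substantive point, and the step I expect to be the main obstacle, is the pair of relations in \eqref{rel-def-Heckealg5} that produce $T_{s_0}$, namely $\Phi(T_\rho)g_1=\Phi(T_{s_0})\Phi(T_\rho)$ and $\Phi(T_\rho)\Phi(T_{s_0})=g_{n-1}\Phi(T_\rho)$; this is exactly where the affine braid relation \eqref{rel-def-Y8} is forced to enter, since $X_1$ does not commute with $g_1$. I would establish $\Phi(T_{s_0})=\Phi(T_\rho)\,g_1\,\Phi(T_\rho)^{-1}$ by expanding $g_{w_\rho}X_1g_1X_1^{-1}g_{w_\rho}^{-1}$ and rewriting it as $X_1^{-1}X_n\tau^{-1}$ using \eqref{rel-def-Y8}, \eqref{giXj} and \eqref{X2n}, while carefully tracking the idempotents through \eqref{egge}. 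Once \eqref{rel-def-Heckealg4} and \eqref{rel-def-Heckealg5} hold for the images, the remaining $s_0$-relations become cheap: the quadratic relation \eqref{rel-def-Heckealg8} for $i=0$ follows by conjugating the $i=1$ relation \eqref{rel-def-Y6} by $\Phi(T_\rho)$, using that $\Phi(T_\rho)$ conjugates $g_1=\Phi(T_{s_1})$ to $\Phi(T_{s_0})$ and $\bar e_1$ to $\bar e_{n,1}$ (the latter from \eqref{rel-def-Y4}); and the $s_0$-cases of \eqref{rel-def-Heckealg6}, \eqref{rel-def-Heckealg7} follow by $\Phi(T_\rho)$-conjugation from their finite-index counterparts, which already hold in $\widehat Y_{r,n}$.

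For $\Psi$, relations \eqref{rel-def-Y1}--\eqref{rel-def-Y6} transport directly and \eqref{rel-def-Y7} is immediate from \eqref{rel-def-Heckealg4-com}. Relation \eqref{rel-def-Y10} reduces to the fact that $s_1\cdots s_{n-1}\rho$ centralizes $\mathcal{T}$ (its permutation part is trivial), which I verify from \eqref{rel-def-Heckealg3}--\eqref{rel-def-Heckealg4}; relation \eqref{rel-def-Y9} follows from \eqref{rel-def-Heckealg6}--\eqref{rel-def-Heckealg7}; and \eqref{rel-def-Y8} is the mirror of the obstacle above, proved inside $\widehat{H}_{r,n}^{\mathrm{aff}}$ via the conjugation relations \eqref{rel-def-Heckealg5}. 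Finally, mutual invertibility need only be checked on generators: $\Phi\Psi$ and $\Psi\Phi$ fix $\bar t_j,g_i$ and $t_j,T_{s_i}$ by construction, while $\Phi\Psi(X_1)=g_1^{-1}\cdots g_{n-1}^{-1}\Phi(T_\rho)=X_1$ and $\Psi\Phi(T_\rho)=T_{s_{n-1}}\cdots T_{s_1}\Psi(X_1)=T_\rho$ are one-line cancellations, and $\Psi\Phi(T_{s_0})=T_{s_0}$ follows after computing $\Psi(X_i)$ from \eqref{X2n}. Throughout, these formulas are guided by and consistent with the group isomorphisms of Lemmas \ref{coxeter-lemma-iso1} and \ref{coxeter-lemma-iso2}.
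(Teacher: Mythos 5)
Your proposal is correct and follows the paper's overall strategy exactly: extend $\Phi$ and $\Psi$ to algebra homomorphisms, verify that each preserves the defining relations of its source, and check mutual inversion on generators. Where you genuinely diverge is in the treatment of the relations involving $T_{s_{0}}$. The paper verifies the commutation \eqref{rel-def-Heckealg6}, the two affine braid relations in \eqref{rel-def-Heckealg7} and the quadratic relation \eqref{rel-def-Heckealg8} for $s_{0}$ by separate direct computations in $\widehat{Y}_{r,n}$ (identities \eqref{main-theorem-iden6}--\eqref{main-theorem-iden16}), whereas you obtain all of them at once by conjugating the corresponding finite-index relations by $\Phi(T_{\rho})$, once \eqref{rel-def-Heckealg5} is known to hold for the images; this is a real economy, and your computation $\Phi(T_{\rho})\bar{e}_{1}\Phi(T_{\rho})^{-1}=\bar{e}_{n,1}$ is precisely the ingredient the paper uses in \eqref{main-theorem-iden16}. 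The price of this shortcut is that \emph{all} cases of \eqref{main-theorem-iden1} must be established first, and here your sketch is thinner than the paper in one spot: you correctly single out $\Phi(T_{\rho})g_{1}=\Phi(T_{s_{0}})\Phi(T_{\rho})$ and $\Phi(T_{\rho})\Phi(T_{s_{0}})=g_{n-1}\Phi(T_{\rho})$ as the two substantive instances, but you only sketch a proof of the first (via \eqref{rel-def-Y8}, \eqref{giXj} and \eqref{X2n}, matching the paper's \eqref{main-theorem-iden4}). The second, the $i=0$ case, does not follow formally from the cases you do treat (that deduction would require knowing $\Phi(T_{\rho})^{n}$ commutes with $g_{1}$); in the paper it reduces to the pure braid-group identity \eqref{main-theorem-iden3}, and you would need to supply that computation, or an equivalent, to complete the argument. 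Everything else --- the verbatim transport of \eqref{rel-def-Heckealg1}--\eqref{rel-def-Heckealg3} and the finite-index relations, the reduction of \eqref{rel-def-Y8}--\eqref{rel-def-Y10} for $\Psi$ to identities in $\widehat{H}_{r,n}^{\mathrm{aff}}$, and the one-line cancellations for $\Phi\circ\Psi=\Psi\circ\Phi=\mathrm{Id}$ --- matches the paper's proof step for step.
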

\begin{proof}
We extend $\Phi$ and $\Psi$ defined on the generators to algebra homomorphisms. We need to show that $\Phi$ and $\Psi$ preserve the defining relations of $\widehat{H}_{r,n}^{\mathrm{aff}}$ and $\widehat{Y}_{r,n}$, respectively.

By \eqref{rel-def-Y3}-\eqref{rel-def-Y5}, it is obvious that $\Phi$ preserves \eqref{rel-def-Heckealg1}-\eqref{rel-def-Heckealg3}.

To show that $\Phi$ preserves \eqref{rel-def-Heckealg4}, it suffices to prove that $g_{n-1}\cdots g_{1}X_{1}\bar{t}_{j}=\bar{t}_{j-1}g_{n-1}\cdots g_{1}X_{1}$ for $1\leq j\leq n,$ where we set $\bar{t}_{0} :=\bar{t}_{n}.$ It easily follows from \eqref{rel-def-Y4} and \eqref{xyyx}.

To show that $\Phi$ preserves \eqref{rel-def-Heckealg5}, it suffices to prove that
\begin{align}\label{main-theorem-iden1}
g_{n-1}\cdots g_{1}X_{1}\cdot \Phi(T_{s_{\overline{i}}})=\Phi(T_{s_{\overline{i-1}}})\cdot g_{n-1}\cdots g_{1}X_{1}\quad \text{ for }0\leq i\leq n-1.
\end{align}
Note that
\begin{align}\label{main-theorem-iden2}
\Phi(T_{s_{0}})&=X_{1}^{-1}X_{n}g_{n-1}^{-1}\cdots g_{2}^{-1}g_{1}^{-1}g_{2}^{-1}\cdots g_{n-1}^{-1}\notag\\
&=X_{1}^{-1}g_{n-1}\cdots g_{2}g_{1}X_{1}g_{2}^{-1}\cdots g_{n-1}^{-1}\notag\\
&=X_{1}^{-1}g_{n-1}\cdots g_{2}g_{1}g_{2}^{-1}\cdots g_{n-1}^{-1}X_{1}.
\end{align}

For $i=0,$ in order to show \eqref{main-theorem-iden1}, it suffices to prove
\begin{align}\label{main-theorem-iden3}
g_{n-1}\cdots g_{2}g_{1}g_{2}^{-1}\cdots g_{n-1}^{-1}=g_{1}^{-1}g_{2}^{-1}\cdots g_{n-2}^{-1}g_{n-1}\cdots g_{2}g_{1}.
\end{align}
By $g_{i+1}g_{i}g_{i+1}^{-1}=g_{i}^{-1}g_{i+1}g_{i}$ for $1\leq i\leq n-2,$ we have
\begin{align*}
g_{n-1}\cdots g_{2}g_{1}g_{2}^{-1}\cdots g_{n-1}^{-1}&=g_{n-1}\cdots g_{3}g_{1}^{-1}g_{2}g_{1}g_{3}^{-1}\cdots g_{n-1}^{-1}\notag\\
&=g_{1}^{-1}g_{n-1}\cdots g_{4}g_{2}^{-1}g_{3}g_{2}g_{4}^{-1}\cdots g_{n-1}^{-1}g_{1}\notag\\
&=\cdots\cdots\notag\\
&=g_{1}^{-1}g_{2}^{-1}\cdots g_{n-2}^{-1}g_{n-1}\cdots g_{2}g_{1}.
\end{align*}

For $i=1,$ in order to show \eqref{main-theorem-iden1}, it suffices to prove
\begin{align}\label{main-theorem-iden4}
g_{n-1}\cdots g_{2}g_{1}X_{1}g_{1}X_{1}^{-1}g_{1}^{-1}\cdots g_{n-1}^{-1}=X_{1}^{-1}g_{n-1}\cdots g_{2}g_{1}X_{1}g_{2}^{-1}\cdots g_{n-1}^{-1},
\end{align}
which follows from \eqref{giXj} and the commutativity of $g_{1}X_{1}g_{1}$ and $X_{1}^{-1}$.

For $2\leq i\leq n-1,$ in order to show \eqref{main-theorem-iden1}, it suffices to prove
\begin{align}\label{main-theorem-iden5}
g_{n-1}\cdots g_{2}g_{1}X_{1}g_{i}=g_{i-1}g_{n-1}\cdots g_{2}g_{1}X_{1}.
\end{align}
Note that
\begin{align*}
g_{i-1}^{-1}g_{n-1}\cdots g_{2}g_{1}X_{1}g_{i}&=g_{n-1}\cdots g_{i+1}g_{i-1}^{-1}g_{i}g_{i-1}g_{i-2}\cdots g_{1}X_{1}g_{i}\notag\\
&=g_{n-1}\cdots g_{i+1}g_{i}g_{i-1}g_{i}^{-1}g_{i-2}\cdots g_{1}X_{1}g_{i}\notag\\
&=g_{n-1}\cdots g_{i+1}g_{i}g_{i-1}g_{i-2}\cdots g_{1}g_{i}^{-1}X_{1}g_{i}\notag\\
&=g_{n-1}\cdots g_{2}g_{1}X_{1}.
\end{align*}
We see that \eqref{main-theorem-iden5} holds.

To show that $\Phi$ preserves \eqref{rel-def-Heckealg6}, it suffices to prove that $\Phi(T_{s_{0}})\Phi(T_{s_{j}})=\Phi(T_{s_{j}})\Phi(T_{s_{0}})$ for $2\leq j\leq n-2,$ which is equivalent to the following identity:
\begin{align}\label{main-theorem-iden6}
g_{j}\cdot X_{1}^{-1}g_{n-1}\cdots g_{2}g_{1}g_{2}^{-1}\cdots g_{n-1}^{-1}X_{1}=X_{1}^{-1}g_{n-1}\cdots g_{2}g_{1}g_{2}^{-1}\cdots g_{n-1}^{-1}X_{1}\cdot g_{j}.
\end{align}
Note that
\begin{align*}
&g_{j}\cdot X_{1}^{-1}g_{n-1}\cdots g_{2}g_{1}g_{2}^{-1}\cdots g_{n-1}^{-1}X_{1}\\
=&X_{1}^{-1}g_{n-1}\cdots g_{j+2}g_{j}g_{j+1}g_{j}g_{j-1}\cdots g_{2}g_{1}g_{2}^{-1}\cdots g_{n-1}^{-1}X_{1}\\
=&X_{1}^{-1}g_{n-1}\cdots g_{j+2}g_{j+1}g_{j}g_{j+1}g_{j-1}\cdots g_{2}g_{1}g_{2}^{-1}\cdots g_{n-1}^{-1}X_{1}\\
=&X_{1}^{-1}g_{n-1}\cdots g_{j+2}g_{j+1}g_{j}g_{j-1}\cdots g_{2}g_{1}g_{2}^{-1}\cdots g_{j-1}^{-1}g_{j+1}g_{j}^{-1}g_{j+1}^{-1}g_{j+2}^{-1}\cdots g_{n-1}^{-1}X_{1}\\
=&X_{1}^{-1}g_{n-1}\cdots g_{2}g_{1}g_{2}^{-1}\cdots g_{j-1}^{-1}g_{j}^{-1}g_{j+1}^{-1}g_{j}g_{j+2}^{-1}\cdots g_{n-1}^{-1}X_{1}\\
=&X_{1}^{-1}g_{n-1}\cdots g_{2}g_{1}g_{2}^{-1}\cdots g_{n-1}^{-1}X_{1}\cdot g_{j}.
\end{align*}
We see that \eqref{main-theorem-iden6} holds.

To show that $\Phi$ preserves \eqref{rel-def-Heckealg7}, it suffices to prove that
\begin{align}\label{main-theorem-iden7}
\Phi(T_{s_{1}})\Phi(T_{s_{0}})\Phi(T_{s_{1}})=\Phi(T_{s_{0}})\Phi(T_{s_{1}})\Phi(T_{s_{0}})
\end{align}
and
\begin{align}\label{main-theorem-iden8}
\Phi(T_{s_{n-1}})\Phi(T_{s_{0}})\Phi(T_{s_{n-1}})=\Phi(T_{s_{0}})\Phi(T_{s_{n-1}})\Phi(T_{s_{0}}).
\end{align}

We first show that \eqref{main-theorem-iden7} holds. We have
\begin{align}\label{main-theorem-iden9}
\Phi(T_{s_{1}})\Phi(T_{s_{0}})\Phi(T_{s_{1}})&=g_{1}\cdot X_{1}^{-1}X_{n}g_{n-1}^{-1}\cdots g_{2}^{-1}g_{1}^{-1}g_{2}^{-1}\cdots g_{n-1}^{-1}\cdot g_{1}\notag\\
&=g_{1}\cdot X_{1}^{-1}X_{n}g_{n-1}^{-1}\cdots g_{3}^{-1}g_{1}^{-1}g_{2}^{-1}g_{1}^{-1}g_{3}^{-1}\cdots g_{n-1}^{-1}\cdot g_{1}\notag\\
&=g_{1}X_{1}^{-1}X_{n}g_{1}^{-1}\cdot g_{n-1}^{-1}\cdots g_{3}^{-1}g_{2}^{-1}g_{3}^{-1}\cdots g_{n-1}^{-1},
\end{align}
and
\begin{align}\label{main-theorem-iden10}
&\Phi(T_{s_{0}})\Phi(T_{s_{1}})\Phi(T_{s_{0}})\notag\\
=&X_{1}^{-1}X_{n}g_{n-1}^{-1}\cdots g_{2}^{-1}g_{1}^{-1}g_{2}^{-1}\cdots g_{n-1}^{-1}\cdot g_{1}\cdot X_{1}^{-1}X_{n}g_{n-1}^{-1}\cdots g_{2}^{-1}g_{1}^{-1}g_{2}^{-1}\cdots g_{n-1}^{-1}\notag\\
=&X_{1}^{-1}X_{n}g_{n-1}^{-1}\cdots g_{3}^{-1}g_{1}^{-1}g_{2}^{-1}g_{1}^{-1}g_{3}^{-1}\cdots g_{n-1}^{-1}\cdot g_{1}\cdot X_{1}^{-1}g_{n-1}\cdots g_{2}g_{1}X_{1}g_{2}^{-1}\cdots g_{n-1}^{-1}\notag\\
=&X_{1}^{-1}X_{n}g_{n-1}^{-1}\cdots g_{3}^{-1}g_{1}^{-1}g_{2}^{-1}g_{3}^{-1}\cdots g_{n-1}^{-1}\cdot X_{1}^{-1}g_{n-1}\cdots g_{2}g_{1}X_{1}g_{2}^{-1}\cdots g_{n-1}^{-1}\notag\\
=&X_{1}^{-1}X_{n}g_{n-1}^{-1}\cdots g_{3}^{-1}g_{1}^{-1}X_{1}^{-1}g_{1}X_{1}g_{2}^{-1}\cdots g_{n-1}^{-1}\notag\\
=&X_{1}^{-1}X_{n}g_{1}^{-1}X_{1}^{-1}g_{1}X_{1}\cdot g_{n-1}^{-1}\cdots g_{3}^{-1}g_{2}^{-1}\cdots g_{n-1}^{-1}
\end{align}
By \eqref{main-theorem-iden9} and \eqref{main-theorem-iden10}, in order to show \eqref{main-theorem-iden7}, it suffices to prove that \[g_{1}X_{1}^{-1}g_{1}^{-1}=X_{1}^{-1}g_{1}^{-1}X_{1}^{-1}g_{1}X_{1},\]
which follows from
\[g_{1}^{-1}X_{1}^{-1}g_{1}^{-1}X_{1}^{-1}g_{1}X_{1}g_{1}=(g_{1}X_{1}g_{1})^{-1}\cdot X_{1}^{-1}\cdot (g_{1}X_{1}g_{1})=X_{2}^{-1}X_{1}^{-1}X_{2}=X_{1}^{-1}.\]

Next we show that \eqref{main-theorem-iden8} holds. We have
\begin{align}\label{main-theorem-iden11}
\Phi(T_{s_{n-1}})\Phi(T_{s_{0}})\Phi(T_{s_{n-1}})=&g_{n-1}X_{1}^{-1}X_{n}g_{n-1}^{-1}\cdots g_{2}^{-1}g_{1}^{-1}g_{2}^{-1}\cdots g_{n-1}^{-1}g_{n-1}\notag\\
=&X_{1}^{-1}g_{n-1}X_{n}g_{n-1}^{-1}\cdots g_{2}^{-1}g_{1}^{-1}g_{2}^{-1}\cdots g_{n-2}^{-1},
\end{align}
and
\begin{align}\label{main-theorem-iden12}
&\Phi(T_{s_{0}})\Phi(T_{s_{n-1}})\Phi(T_{s_{0}})\notag\\
=&X_{1}^{-1}X_{n}g_{n-1}^{-1}\cdots g_{2}^{-1}g_{1}^{-1}g_{2}^{-1}\cdots g_{n-1}^{-1}\cdot g_{n-1}\cdot X_{1}^{-1}X_{n}g_{n-1}^{-1}\cdots g_{2}^{-1}g_{1}^{-1}g_{2}^{-1}\cdots g_{n-1}^{-1}\notag\\
=&X_{1}^{-1}g_{n-1}\cdots g_{2}g_{1}X_{1}g_{2}^{-1}\cdots g_{n-2}^{-1}\cdot X_{1}^{-1}X_{n}g_{n-1}^{-1}\cdots g_{2}^{-1}g_{1}^{-1}g_{2}^{-1}\cdots g_{n-1}^{-1}\notag\\
=&X_{1}^{-1}g_{n-1}X_{n}g_{n-2}\cdots g_{2}g_{1}g_{2}^{-1}\cdots g_{n-2}^{-1}\cdot g_{n-1}^{-1}\cdots g_{2}^{-1}g_{1}^{-1}g_{2}^{-1}\cdots g_{n-1}^{-1}.
\end{align}
By \eqref{main-theorem-iden11} and \eqref{main-theorem-iden12}, in order to show \eqref{main-theorem-iden8}, it suffices to prove that
\begin{align}\label{main-theorem-iden13}
g_{n-1}^{-1}\cdots g_{2}^{-1}g_{1}^{-1}g_{2}^{-1}\cdots g_{n-2}^{-1}=g_{n-2}\cdots g_{2}g_{1}g_{2}^{-1}\cdots g_{n-2}^{-1}\cdot g_{n-1}^{-1}\cdots g_{1}^{-1}\cdots g_{n-1}^{-1}.
\end{align}

By \eqref{rel-def-Y2}, we have
\[g_{n-2}^{-1}\cdots g_{2}^{-1}g_{1}^{-1}g_{2}^{-1}\cdots g_{n-2}^{-1}=g_{1}^{-1}g_{2}^{-1}\cdots g_{n-3}^{-1}g_{n-2}^{-1}g_{n-3}^{-1}\cdots g_{2}^{-1}g_{1}^{-1},\]
and
\[g_{n-1}^{-1}\cdots g_{2}^{-1}g_{1}^{-1}g_{2}^{-1}\cdots g_{n-1}^{-1}=g_{1}^{-1}g_{2}^{-1}\cdots g_{n-2}^{-1}g_{n-1}^{-1}g_{n-2}^{-1}\cdots g_{2}^{-1}g_{1}^{-1}.\]
Thus, in order to show \eqref{main-theorem-iden13}, it suffices to prove that
\begin{align}\label{main-theorem-iden14}
g_{n-1}^{-1}g_{1}^{-1}g_{2}^{-1}\cdots g_{n-3}^{-1}=g_{n-2}\cdots g_{2}g_{1}g_{2}^{-1}\cdots g_{n-2}^{-1}\cdot g_{1}^{-1}\cdots g_{n-2}^{-1}g_{n-1}^{-1},
\end{align}
that is,
\begin{align}\label{main-theorem-iden15}
g_{1}^{-1}g_{2}^{-1}\cdots g_{n-3}^{-1}=g_{n-2}\cdots g_{2}g_{1}g_{2}^{-1}\cdots g_{n-2}^{-1}\cdot g_{1}^{-1}\cdots g_{n-2}^{-1},
\end{align}
which follows from \eqref{main-theorem-iden3}.

By \eqref{main-theorem-iden2}, we have
\begin{align}\label{main-theorem-iden16}
\Phi(T_{s_{0}})^{2}&=X_{1}^{-1}g_{n-1}\cdots g_{2}g_{1}g_{2}^{-1}\cdots g_{n-1}^{-1}X_{1}\cdot X_{1}^{-1}g_{n-1}\cdots g_{2}g_{1}g_{2}^{-1}\cdots g_{n-1}^{-1}X_{1}\notag\\
=&X_{1}^{-1}g_{n-1}\cdots g_{2}g_{1}^{2}g_{2}^{-1}\cdots g_{n-1}^{-1}X_{1}\notag\\
=&X_{1}^{-1}g_{n-1}\cdots g_{2}(1+(q-q^{-1})\bar{e}_{1}g_{1})g_{2}^{-1}\cdots g_{n-1}^{-1}X_{1}\notag\\
=&1+(q-q^{-1})\bar{e}_{n, 1}X_{1}^{-1}g_{n-1}\cdots g_{2}g_{1}g_{2}^{-1}\cdots g_{n-1}^{-1}X_{1}\notag\\
=&1+(q-q^{-1})\bar{e}_{n, 1}\Phi(T_{s_{0}}),
\end{align}
which shows that $\Phi$ preserves \eqref{rel-def-Heckealg8}.

Next we show that $\Psi$ preserve the defining relations of $\widehat{Y}_{r,n}.$ It is obvious that $\Psi$ preserve the relations \eqref{rel-def-Y1}-\eqref{rel-def-Y7}.

In order to show that $\Psi$ preserve \eqref{rel-def-Y8}, it suffices to prove that
\begin{align}\label{main-theorem-iden17}
T_{s_{1}}^{-1}\cdots T_{s_{n-1}}^{-1}T_{\rho}\cdot T_{s_{1}} \cdot T_{s_{1}}^{-1}&\cdots T_{s_{n-1}}^{-1}T_{\rho}\cdot T_{s_{1}}\notag\\
&=T_{s_{1}}\cdot T_{s_{1}}^{-1}\cdots T_{s_{n-1}}^{-1}T_{\rho}\cdot T_{s_{1}}\cdot T_{s_{1}}^{-1}\cdots T_{s_{n-1}}^{-1}T_{\rho}.
\end{align}
By \eqref{rel-def-Heckealg5}, we have \[T_{\rho}\cdot T_{s_{2}}^{-1}\cdots T_{s_{n-1}}^{-1}T_{\rho}\cdot T_{s_{1}}=T_{s_{1}}^{-1}\cdots T_{s_{n-2}}^{-1}T_{\rho}^{2}T_{s_{1}}=T_{s_{1}}^{-1}\cdots T_{s_{n-2}}^{-1}T_{s_{n-1}}T_{\rho}^{2}.\]
Thus, in order to prove \eqref{main-theorem-iden17}, it suffices to show that
\begin{align}\label{main-theorem-iden18}
T_{s_{1}}^{-1}\cdots T_{s_{n-1}}^{-1}T_{s_{1}}^{-1}\cdots T_{s_{n-2}}^{-1}T_{s_{n-1}}=T_{s_{2}}^{-1}\cdots T_{s_{n-1}}^{-1}T_{s_{1}}^{-1}\cdots T_{s_{n-2}}^{-1},
\end{align}
that is,
\begin{align}\label{main-theorem-iden19}
T_{s_{1}}^{-1}\cdots T_{s_{n-2}}^{-1}T_{s_{n-1}}\cdots T_{s_{1}}=T_{s_{n-1}}\cdots T_{s_{1}}\cdot T_{s_{2}}^{-1}\cdots T_{s_{n-1}}^{-1},
\end{align}
which follows from an argument similar to \eqref{main-theorem-iden3}.

In order to show that $\Psi$ preserve \eqref{rel-def-Y9}, it suffices to prove that
\begin{align}\label{main-theorem-iden20}
T_{s_{i}}\cdot T_{s_{1}}^{-1}\cdots T_{s_{n-1}}^{-1}T_{\rho}=T_{s_{1}}^{-1}\cdots T_{s_{n-1}}^{-1}T_{\rho}\cdot T_{s_{i}}\quad \text{ for }2\leq i\leq n-1.
\end{align}
By \eqref{rel-def-Heckealg5}, we have $T_{\rho}\cdot T_{s_{i}}=T_{s_{i-1}}\cdot T_{\rho}$ for $2\leq i\leq n-1.$ Thus, in order to show \eqref{main-theorem-iden20}, it suffices to prove that
\begin{align}\label{main-theorem-iden21}
T_{s_{i}}\cdot T_{s_{1}}^{-1}\cdots T_{s_{n-1}}^{-1}=T_{s_{1}}^{-1}\cdots T_{s_{n-1}}^{-1}T_{s_{i-1}},
\end{align}
which follows from the identity $T_{s_{i}}T_{s_{i-1}}^{-1}T_{s_{i}}^{-1}=T_{s_{i-1}}^{-1}T_{s_{i}}^{-1}T_{s_{i-1}}.$

Finally we show that $\Psi$ preserve \eqref{rel-def-Y10}. It suffices to prove that
\begin{align}\label{main-theorem-iden22}
T_{s_{1}}^{-1}\cdots T_{s_{n-1}}^{-1}T_{\rho}\cdot t_{j}=t_{j}\cdot T_{s_{1}}^{-1}\cdots T_{s_{n-1}}^{-1}T_{\rho}\quad \text{ for }1\leq j\leq n.
\end{align}

For $j=1,$ we have $T_{s_{1}}^{-1}\cdots T_{s_{n-1}}^{-1}T_{\rho}\cdot t_{1}=T_{s_{1}}^{-1}\cdots T_{s_{n-1}}^{-1}t_{n}\cdot T_{\rho}=\cdots=t_{1}T_{s_{1}}^{-1}\cdots T_{s_{n-1}}^{-1}T_{\rho}.$

For $2\leq j\leq n,$ we have
\begin{align*}
T_{s_{1}}^{-1}\cdots T_{s_{n-1}}^{-1}T_{\rho}\cdot t_{j}&=T_{s_{1}}^{-1}\cdots T_{s_{n-1}}^{-1}t_{j-1}\cdot T_{\rho}\\
&=T_{s_{1}}^{-1}\cdots T_{s_{j-2}}^{-1}t_{j}T_{s_{j-1}}^{-1}T_{s_{j}}^{-1}\cdot T_{s_{n-1}}^{-1}\cdot T_{\rho}\\
&=t_{j}\cdot T_{s_{1}}^{-1}\cdots T_{s_{n-1}}^{-1}T_{\rho}.
\end{align*}
We see that \eqref{main-theorem-iden22} holds.

It is obvious that $\Phi\circ\Psi=$Id and $\Psi\circ\Phi=$Id. Thus, $\Phi$ and $\Psi$ establish an isomorphism of algebras.
\end{proof}

Let $S :=\{s_{1},\ldots,s_{n-1}\}$ and $S^{\mathrm{aff}} :=\{s_{0}, s_{1},\ldots,s_{n-1}\}$. Let $W^{\mathrm{aff}}$ be the subgroup of $\widehat{W}$ generated by $s_{0}, s_{1},\ldots, s_{n-1},$ which is exactly the affine Weyl group of type $A$. It is well-known that $W^{\mathrm{aff}}$ is a Coxeter group with a length function $\ell.$ We extend the length function $\ell$ from $W^{\mathrm{aff}}$ to $\widehat{W}$ by letting $\ell(\rho^{k}w)=\ell(w)$ for any $k\in \mathbb{Z}$ and $w\in W^{\mathrm{aff}}.$ We further extend the length function $\ell$ from $\widehat{W}$ to $\widehat{W}_{r,n}$ by setting $\ell(t\widehat{w})=\ell(\widehat{w})$ for any $t\in \mathcal{T}$ and $\widehat{w}\in \widehat{W},$ where $\mathcal{T}$ is identified with the subgroup of $\widehat{W}_{r,n}$ generated by the elements $t_{1},\ldots,t_{n}$ by Lemma \ref{coxeter-lemma-iso2}.

For each $\underline{w}\in \widehat{W}_{r,n},$ let $\underline{w}=t\rho^{k}s_{i_1}\cdots s_{i_{r}}$ be a reduced expression of $\underline{w}.$ From the presentation of $\widehat{H}_{r,n}^{\mathrm{aff}}$ given in Definition \ref{Definition 2-4}, we see that $T_{\underline{w}} :=T_{t}T_{\rho}^{k}T_{s_{i_1}}\cdots T_{s_{i_{r}}}$ is well-defined, that is, it does not depend on the choice of the reduced expression of $\underline{w}$, and here we set $T_{t} :=t\in \mathcal{T}.$

We first state the following lemma, which can be regarded as a generalization of [Ju, Lemmas 3 and 5].
\begin{lemma}\label{lemma-juyumaya-gene}
In $\widehat{W}_{r,n}$, if there exist $s_{i}, s_{j}$ $(0\leq i, j\leq n-1)$ and $\underline{w}\in \widehat{W}_{r,n}$ such that $\ell(s_{i}\underline{w}s_{j})=\ell(\underline{w})$ and $\ell(s_{i}\underline{w})=\ell(\underline{w}s_{j}),$ then we have

$(1)$ $e_{i}s_{i}\underline{w}=e_{i}\underline{w}s_{j}.$

$(2)$ $s_{i}\underline{w}e_{j}=e_{i}\underline{w}s_{j}.$

$(3)$ $e_{i}\underline{w}=\underline{w}e_{j}.$
\end{lemma}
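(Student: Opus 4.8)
The plan is to reduce all three identities to a single relation in the group, namely $s_{i}\widehat{w}=\widehat{w}s_{j}$ holding at the level of the \emph{extended} affine Weyl group $\widehat{W}$, and then to feed this relation into two completely elementary facts about the idempotents $e_{i}$. Throughout I work in the group algebra $\mathbb{Z}[\tfrac1r]\widehat{W}_{r,n}$ (in which the $e_{i}$ live), and I use the isomorphism $\widehat{W}_{r,n}\cong\widetilde{W}_{r,n}=(\mathcal{T}\times\mathfrak{X})\rtimes\mathfrak{S}_{n}$ of Lemma~\ref{coxeter-lemma-iso2}. Write $\underline{w}=t\,\widehat{w}$ with $t\in\mathcal{T}$ and $\widehat{w}\in\widehat{W}$, and let $\pi\in\mathfrak{S}_{n}$ be the image of $\underline{w}$ under $\widetilde{W}_{r,n}\to\mathfrak{S}_{n}$. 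For $0\le i\le n-1$ let $\tau_{i}\in\mathfrak{S}_{n}$ be the image of $s_{i}$, so that $\tau_{i}=(i,i+1)$ for $1\le i\le n-1$ and $\tau_{0}=(1,n)$ by Lemma~\ref{coxeter-lemma-iso1}; in each case $e_{i}=\tfrac1r\sum_{s}t_{a_{i}}^{s}t_{b_{i}}^{-s}$ where $\{a_{i},b_{i}\}$ is the pair transposed by $\tau_{i}$.

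First I would record the two properties of $e_{i}$ inside $\mathbb{Z}[\tfrac1r]\mathcal{T}$. It is \emph{symmetric}, since reindexing $s\mapsto -s$ gives $\tfrac1r\sum_{s}t_{a}^{s}t_{b}^{-s}=\tfrac1r\sum_{s}t_{b}^{s}t_{a}^{-s}$, and it is \emph{absorbing}, since reindexing $s\mapsto s+1$ (using $t_{a_{i}}^{r}=1$) gives $e_{i}t_{a_{i}}=e_{i}t_{b_{i}}$, i.e. $e_{i}\bigl(t_{a_{i}}t_{b_{i}}^{-1}\bigr)^{m}=e_{i}$ for all $m$. I would also note the conjugation formula $\underline{w}\,t_{c}\,\underline{w}^{-1}=t_{\pi(c)}$ (the $\mathcal{T}$- and $\mathfrak{X}$-parts of $\underline{w}$ commute with $\mathcal{T}$), whence $s_{i}e_{i}s_{i}^{-1}=e_{i}$ by symmetry, and more generally $\underline{w}\,e_{j}\,\underline{w}^{-1}=\tfrac1r\sum_{s}t_{\pi(a_{j})}^{s}t_{\pi(b_{j})}^{-s}$.

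The main obstacle is the group relation $s_{i}\widehat{w}=\widehat{w}s_{j}$ in $\widehat{W}$, which carries all the combinatorial content. Writing $\widehat{w}=\rho^{k}v$ with $v\in W^{\mathrm{aff}}$ and using that $\rho$ acts as a length-preserving diagram automorphism (so $\rho^{-k}s_{i}\rho^{k}=s_{i'}\in S^{\mathrm{aff}}$), the hypotheses transfer verbatim to $\ell(s_{i'}vs_{j})=\ell(v)$ and $\ell(s_{i'}v)=\ell(vs_{j})$ inside the genuine Coxeter group $W^{\mathrm{aff}}$, and it suffices to prove $s_{i'}v=vs_{j}$ there. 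Since $\ell(s_{i'}v)=\ell(vs_{j})$, both differ from $\ell(v)$ with the \emph{same} sign, and I would argue by the exchange condition. If both lengths equal $\ell(v)+1$, fix a reduced word $s_{i'}v=s_{i'}r_{1}\cdots r_{\ell(v)}$; as $\ell(s_{i'}vs_{j})<\ell(s_{i'}v)$, exchange deletes one letter, but deleting an $r_{k}$ would leave a reduced word beginning with $s_{i'}$, forcing $\ell(vs_{j})=\ell(v)-1$ against the hypothesis, so the deleted letter is the leading $s_{i'}$, giving $s_{i'}vs_{j}=v$, i.e. $s_{i'}v=vs_{j}$. The case where both lengths equal $\ell(v)-1$ is symmetric (start from a reduced word for $v$ beginning with $s_{i'}$). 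Lifting back yields $s_{i}\widehat{w}=\rho^{k}s_{i'}v=\rho^{k}vs_{j}=\widehat{w}s_{j}$, and projecting to $\mathfrak{S}_{n}$ gives $\pi\tau_{j}\pi^{-1}=\tau_{i}$, equivalently $\{\pi(a_{j}),\pi(b_{j})\}=\{a_{i},b_{i}\}$. It is exactly the same-sign hypothesis that rules out the degenerate deletion, and this is where the proof really lives.

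Finally I would assemble the three identities. For $(3)$, the conjugation formula and the displayed relation give $\underline{w}\,e_{j}\,\underline{w}^{-1}=e_{i}$ by symmetry, i.e. $e_{i}\underline{w}=\underline{w}e_{j}$. For $(1)$ I track the $\mathcal{T}$-discrepancy: from $s_{i}\widehat{w}=\widehat{w}s_{j}$ I get $s_{i}\underline{w}=(s_{i}ts_{i}^{-1})\,\widehat{w}s_{j}$, so
\[
e_{i}s_{i}\underline{w}=e_{i}(s_{i}ts_{i}^{-1})\,\widehat{w}s_{j}=e_{i}t\,\widehat{w}s_{j}=e_{i}\underline{w}s_{j},
\]
where the middle equality is the absorbing property applied to $s_{i}ts_{i}^{-1}=\bigl(t_{a_{i}}t_{b_{i}}^{-1}\bigr)^{m}t$. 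For $(2)$ I move $e_{j}$ leftwards using $s_{j}e_{j}s_{j}^{-1}=e_{j}$ and then $\widehat{w}e_{j}\widehat{w}^{-1}=e_{i}$, obtaining $s_{i}\underline{w}e_{j}=(s_{i}ts_{i}^{-1})e_{i}\,\widehat{w}s_{j}$, which equals $e_{i}t\,\widehat{w}s_{j}=e_{i}\underline{w}s_{j}$ by the identical absorbing cancellation. Thus once the Coxeter step is in hand, everything else is routine bookkeeping in the abelian group algebra of $\mathcal{T}$.
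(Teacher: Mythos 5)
Your proof is correct, and it takes a noticeably different route from the paper's for parts (2) and (3). Both arguments begin the same way: decompose $\underline{w}=t\rho^{k}v$, observe that the length hypotheses descend to $\ell(s_{\overline{i+k}}vs_{j})=\ell(v)$ and $\ell(s_{\overline{i+k}}v)=\ell(vs_{j})$ in the Coxeter group $W^{\mathrm{aff}}$, and extract the group relation $s_{\overline{i+k}}v=vs_{j}$ --- the paper by citing [Lu1, Proposition 1.10], you by reproving it with the exchange condition (your same-sign observation and the argument that the deleted letter must be the leading one are exactly right). From there the paper only uses this relation to prove $(1)$, then proves $(2)$ separately by induction on $\ell(\underline{w})$, with a base case and two subcases in the inductive step, and finally deduces $(3)$ from $(1)$ and $(2)$. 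You instead push the group relation all the way: projecting $s_{i}\widehat{w}=\widehat{w}s_{j}$ to $\mathfrak{S}_{n}$ gives $\tau_{i}=\pi\tau_{j}\pi^{-1}$, hence $\{\pi(a_{j}),\pi(b_{j})\}=\{a_{i},b_{i}\}$ and the conjugation identity $\underline{w}e_{j}\underline{w}^{-1}=e_{i}$, which is $(3)$ outright; then $(1)$ and $(2)$ both follow by routine bookkeeping with the symmetric and absorbing properties of $e_{i}$ in the abelian algebra $\mathbb{Z}[\tfrac1r]\mathcal{T}$ (your identity $e_{i}\,{}^{s_{i}}\!t=e_{i}t$ is the same one the paper records at the start of its proof). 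Your approach buys a shorter, case-free treatment of $(2)$ by recognizing that all three identities are consequences of the single relation $s_{i}\widehat{w}=\widehat{w}s_{j}$ together with the $\mathfrak{S}_{n}$-equivariance of the family $\{e_{i}\}$; the paper's induction is self-contained but does more work than necessary. I found no gaps: the identification $\tau_{0}=(1,n)$ matching $e_{0}=\frac1r\sum_{s}t_{n}^{s}t_{1}^{-s}$, the transfer of the length hypotheses from $\widehat{W}_{r,n}$ to $W^{\mathrm{aff}}$, and the computation $s_{i}ts_{i}^{-1}=(t_{a_{i}}t_{b_{i}}^{-1})^{m}t$ all check out.
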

\begin{proof}
From the definition, we can get that $\rho e_{\overline{i}}=e_{\overline{i-1}}\rho$, $s_{i}e_{i}=e_{i}s_{i}$ and ${}^{s_{i}}\!t e_{i}=e_{i}{}^{s_{i}}\!t=e_{i}t$ for any $0\leq i\leq n-1.$

$(1)$ We assume that the reduced expression of $\underline{w}$ is $\underline{w}=t\rho^{k}w$ for some $k\in \mathbb{Z}$ and $w\in W^{\mathrm{aff}}.$ By assumption, we have
\[\ell(s_{i}\underline{w}s_{j})=\ell(s_{i}t\rho^{k}ws_{j})=\ell({}^{s_{i}}\!t\rho^{k}s_{\overline{i+k}}ws_{j})
=\ell(s_{\overline{i+k}}ws_{j})=\ell(t\rho^{k}w)=\ell(w).\]
Similarly, we have
\[\ell(s_{i}\underline{w})=\ell(s_{i}t\rho^{k}w)=\ell({}^{s_{i}}\!t\rho^{k}s_{\overline{i+k}}w)=\ell(s_{\overline{i+k}}w)=\ell(t\rho^{k}ws_{j})
=\ell(ws_{j}).\]
By [Lu1, Proposition 1.10], we have $s_{\overline{i+k}}w=ws_{j}.$ Thus, we have
\[e_{i}s_{i}\underline{w}=e_{i}s_{i}t\rho^{k}w=e_{i}{}^{s_{i}}\!t\rho^{k}s_{\overline{i+k}}w=e_{i}t\rho^{k}ws_{j}=e_{i}\underline{w}s_{j}.\]

$(2)$ We prove it by induction on $\ell(\underline{w})$. If $\ell(\underline{w})=1$, then we assume that $\underline{w}=t\rho^{h}s_{k}$ for some $h\in \mathbb{Z}$ and $0\leq k\leq n-1.$

If $k=\overline{i+h}$, by the equality
\[\ell(s_{i}\underline{w})=\ell(s_{i}t\rho^{h}s_{k})=\ell({}^{s_{i}}\!t\rho^{h}s_{\overline{i+h}}s_{k})=0=\ell(t\rho^{h}s_{k}s_{j})=\ell(\underline{w}s_{j}),\]
we must have $k=j.$ Thus, we get that
\[s_{i}\underline{w}e_{j}=s_{i}t\rho^{h}s_{k}e_{j}={}^{s_{i}}\!t\rho^{h}s_{\overline{i+h}}s_{k}e_{j}={}^{s_{i}}\!t\rho^{h}e_{\overline{i+h}}
={}^{s_{i}}\!te_{i}\rho^{h}=e_{i}t\rho^{h}=e_{i}\underline{w}s_{j}.\]

If $k\neq\overline{i+h}$, by the equality
\[\ell(s_{i}\underline{w}s_{j})=\ell(s_{i}t\rho^{h}s_{k}s_{j})=\ell({}^{s_{i}}\!t\rho^{h}s_{\overline{i+h}}s_{k}s_{j})
=\ell(s_{\overline{i+h}}s_{k}s_{j})=\ell(s_{k})=\ell(\underline{w}),\]
we must have $j=\overline{i+h},$ and hence $k-\overline{i+h}\not\equiv \pm 1$ $(\text{mod }n).$ So we have $s_{\overline{i+h}}s_{k}=s_{k}s_{\overline{i+h}}$ and $s_{k}e_{\overline{i+h}}=e_{\overline{i+h}}s_{k}.$ Thus, we get
\begin{align*}
s_{i}\underline{w}e_{j}&=s_{i}t\rho^{h}s_{k}e_{j}={}^{s_{i}}\!t\rho^{h}s_{\overline{i+h}}s_{k}e_{\overline{i+h}}
={}^{s_{i}}\!t\rho^{h}e_{\overline{i+h}}s_{\overline{i+h}}s_{k}\\
&={}^{s_{i}}\!te_{i}\rho^{h}s_{k}s_{\overline{i+h}}=e_{i}t\rho^{h}s_{k}s_{j}=e_{i}\underline{w}s_{j}.
\end{align*}

Now we assume that the equality $(2)$ is true if $\ell(\underline{w})< n.$ We suppose that $\underline{w}=t\rho^{h}s_{k_{1}}\cdots s_{k_{n}}$ is a reduced expression of $\underline{w}.$

If $\ell(\underline{w})> \ell(s_{i}\underline{w}),$ since we have $s_{i}\underline{w}=s_{i}t\rho^{h}s_{k_{1}}\cdots s_{k_{n}}={}^{s_{i}}\!t\rho^{h}s_{\overline{i+h}}s_{k_{1}}\cdots s_{k_{n}},$ hence we get $\ell(s_{\overline{i+h}}s_{k_{1}}\cdots s_{k_{n}})< \ell(s_{k_{1}}\cdots s_{k_{n}}).$ By [Lu1, Proposition 1.7], we get that $s_{\overline{i+h}}s_{k_{1}}\cdots s_{k_{n}}=w'$ with $\ell(w')< n,$ and so $s_{k_{1}}\cdots s_{k_{n}}=s_{\overline{i+h}}w'.$

We need to check that $w'$ satisfies the equalities $\ell(s_{\overline{i+h}}w's_{j})=\ell(w')$ and $\ell(s_{\overline{i+h}}w')=\ell(w's_{j}).$ By definition, we have $w'=\rho^{-h}\cdot{}^{s_{i}}\!t^{-1}s_{i}\underline{w}$ and $\rho^{-h}\cdot{}^{s_{i}}\!t^{-1}s_{i}=\rho^{-h}s_{i}t^{-1}=s_{\overline{i+h}}\rho^{-h}t^{-1}.$ Thus, we get that
\begin{align*}
\ell(s_{\overline{i+h}}w's_{j})=\ell(s_{\overline{i+h}}\rho^{-h}\cdot{}^{s_{i}}\!t^{-1}s_{i}\underline{w}s_{j})=
\ell(\rho^{-h}t^{-1}\underline{w}s_{j})=\ell(\underline{w}s_{j})=\ell(s_{i}\underline{w})=\ell(w'),
\end{align*}
and
\begin{align*}
\ell(s_{\overline{i+h}}w')=\ell(s_{\overline{i+h}}\rho^{-h}\cdot{}^{s_{i}}\!t^{-1}s_{i}\underline{w})
=\ell(\rho^{-h}t^{-1}\underline{w})=\ell(\underline{w})=\ell(s_{i}\underline{w}s_{j})=\ell(w's_{j}).
\end{align*}

So by induction, we get that $s_{\overline{i+h}}w'e_{j}=e_{\overline{i+h}}w's_{j}$ and that
\begin{align*}
s_{i}\underline{w}e_{j}=s_{i}t\rho^{h}s_{\overline{i+h}}w'e_{j}={}^{s_{i}}\!t\rho^{h}s_{\overline{i+h}}e_{\overline{i+h}}w's_{j}
={}^{s_{i}}\!te_{i}\rho^{h}s_{\overline{i+h}}w's_{j}=e_{i}t\rho^{h}s_{\overline{i+h}}w's_{j}=e_{i}\underline{w}s_{j}.
\end{align*}

If $\ell(\underline{w})< \ell(s_{i}\underline{w}),$ then we have
\[\ell(s_{i}\cdot \underline{w}s_{j}\cdot s_{j})=\ell(s_{i}\underline{w})=\ell(\underline{w}s_{j}),\]
\[\ell(s_{i}\cdot \underline{w}s_{j})=\ell(\underline{w})=\ell(\underline{w}s_{j}\cdot s_{j}),\]
and
\[\ell(\underline{w}s_{j})=\ell(s_{i}\cdot \underline{w})> \ell(\underline{w})=\ell(s_{i}\cdot \underline{w}s_{j}).\]
Thus, we can apply the first case on the element $\underline{w}s_{j}$, and get that $s_{i}(\underline{w}s_{j})e_{j}=e_{i}(\underline{w}s_{j})s_{j},$ that is, $s_{i}\underline{w}e_{j}=e_{i}\underline{w}s_{j}.$

$(3)$ The equality follows directly from $(1)$ and $(2)$.
\end{proof}

The following proposition can be proved by a standard argument (see [Lu1, Proposition 3.3] for instance).
\begin{proposition}\label{coxeter-lemma-iso-pbwnbasis-4}
$\widehat{H}_{r,n}^{\mathrm{aff}}$ has an $\mathcal{R}$-basis consisting of the following elements$:$
\begin{align}\label{coxeter-lemma-iso-pbwnbasis-444}
\big\{T_{\underline{w}}\:|\:\underline{w}\in \widehat{W}_{r,n}\big\},
\end{align}
\[\text{ or }\qquad\big\{t_{1}^{\beta_{1}}\cdots t_{n}^{\beta_{n}}T_{\widehat{w}}\:|\:0\leq\beta_{1},\ldots,\beta_{n}\leq r-1,~\widehat{w}\in \widehat{W}\big\}.\]
\end{proposition}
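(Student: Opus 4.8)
The plan is to prove the two standard halves separately: that the set \eqref{coxeter-lemma-iso-pbwnbasis-444} spans $\widehat{H}_{r,n}^{\mathrm{aff}}$ over $\mathcal{R}$, and that it is $\mathcal{R}$-linearly independent (well-definedness of $T_{\underline{w}}$ having already been recorded above). The equivalence of the two displayed bases is immediate from the group isomorphism $\widehat{W}_{r,n}\cong\widetilde{W}_{r,n}$ of Lemma \ref{coxeter-lemma-iso2}: every $\underline{w}\in\widehat{W}_{r,n}$ factors uniquely as $\underline{w}=t\widehat{w}$ with $t=t_{1}^{\beta_{1}}\cdots t_{n}^{\beta_{n}}\in\mathcal{T}$ ($0\le\beta_{i}\le r-1$) and $\widehat{w}\in\widehat{W}$, and since $T_{t}:=t$ we have $T_{\underline{w}}=t_{1}^{\beta_{1}}\cdots t_{n}^{\beta_{n}}T_{\widehat{w}}$, so the two indexing sets literally coincide.

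For spanning, I would show that the $\mathcal{R}$-span $M$ of \eqref{coxeter-lemma-iso-pbwnbasis-444} is a left ideal; since it contains $T_{1}=1$ it then equals $\widehat{H}_{r,n}^{\mathrm{aff}}$. Using \eqref{rel-def-Heckealg3}, \eqref{rel-def-Heckealg4} and \eqref{rel-def-Heckealg5}, left multiplication by $t_{j}$ or $T_{\rho}^{\pm1}$ carries each $T_{\underline{w}}$ to another $T_{\underline{w}'}$ after rewriting in normal form, so the only case to check is left multiplication by $T_{s_{i}}$. Commuting $T_{s_{i}}$ past the $\mathcal{T}$- and $T_{\rho}$-parts of $\underline{w}$ reduces to a computation inside the Coxeter part, where either $\ell(s_{i}\underline{w})>\ell(\underline{w})$ and $T_{s_{i}}T_{\underline{w}}=T_{s_{i}\underline{w}}$, or $\ell(s_{i}\underline{w})<\ell(\underline{w})$ and the quadratic relation \eqref{rel-def-Heckealg8} gives $T_{s_{i}}T_{\underline{w}}=T_{s_{i}\underline{w}}+(q-q^{-1})e_{i}T_{\underline{w}}$; as the $e_{i}$ are $\mathcal{R}$-combinations of the $t_{j}$'s, the right-hand side again lies in $M$.

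For linear independence, which is the main obstacle, I would construct the regular representation. Let $V=\bigoplus_{\underline{w}\in\widehat{W}_{r,n}}\mathcal{R}\,v_{\underline{w}}$ be free, and define operators mimicking left multiplication: $t_{j}\cdot v_{\underline{w}}=v_{t_{j}\underline{w}}$, $T_{\rho}^{\pm1}\cdot v_{\underline{w}}=v_{\rho^{\pm1}\underline{w}}$, and
\begin{equation*}
T_{s_{i}}\cdot v_{\underline{w}}=
\begin{cases}
v_{s_{i}\underline{w}}, & \ell(s_{i}\underline{w})>\ell(\underline{w}),\\
v_{s_{i}\underline{w}}+(q-q^{-1})\,e_{i}\cdot v_{\underline{w}}, & \ell(s_{i}\underline{w})<\ell(\underline{w}),
\end{cases}
\end{equation*}
where $e_{i}$ acts through its defining expression in the $t_{j}$'s. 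I must then verify that these operators satisfy every relation of Definition \ref{Definition 2-4}. Relations \eqref{rel-def-Heckealg1}--\eqref{rel-def-Heckealg5} and \eqref{rel-def-Heckealg4-com} follow from the group structure of $\widehat{W}_{r,n}$ together with the identities $\rho e_{\overline{i}}=e_{\overline{i-1}}\rho$, $s_{i}e_{i}=e_{i}s_{i}$ and ${}^{s_{i}}\!t\,e_{i}=e_{i}t$ recorded in the proof of Lemma \ref{lemma-juyumaya-gene} (these also ensure the length-case splittings match on both sides), and the quadratic relation \eqref{rel-def-Heckealg8} reduces, after the two-case expansion, to $e_{i}^{2}=e_{i}$.

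The genuinely delicate point is the braid relation \eqref{rel-def-Heckealg7} (and, more easily, the commutation \eqref{rel-def-Heckealg6}), and this is exactly what Lemma \ref{lemma-juyumaya-gene} is designed to handle. Verifying $T_{s_{\overline{i}}}T_{s_{\overline{i+1}}}T_{s_{\overline{i}}}\cdot v_{\underline{w}}=T_{s_{\overline{i+1}}}T_{s_{\overline{i}}}T_{s_{\overline{i+1}}}\cdot v_{\underline{w}}$ on each basis vector expands, via the case distinction above, into finitely many subcases controlled by the relative lengths of $\underline{w}$ and its translates by $s_{\overline{i}}$, $s_{\overline{i+1}}$. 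The subcases that do not collapse by pure length bookkeeping are precisely those in which two such lengths coincide, i.e. the hypotheses $\ell(s_{i}\underline{w}s_{j})=\ell(\underline{w})$ and $\ell(s_{i}\underline{w})=\ell(\underline{w}s_{j})$; there parts $(1)$--$(3)$ of Lemma \ref{lemma-juyumaya-gene} supply the idempotent identities (notably $e_{i}\underline{w}=\underline{w}e_{j}$) needed to equate the two sides. Once all relations hold, $V$ becomes a $\widehat{H}_{r,n}^{\mathrm{aff}}$-module, and the map $h\mapsto h\cdot v_{1}$ sends $T_{\underline{w}}\mapsto v_{\underline{w}}$; since the $v_{\underline{w}}$ are $\mathcal{R}$-linearly independent, so are the $T_{\underline{w}}$. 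Together with the spanning step this proves the proposition.
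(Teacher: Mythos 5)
Your spanning argument and the identification of the two displayed bases (via $\widehat{W}_{r,n}\cong(\mathcal{T}\times\mathfrak{X})\rtimes\mathfrak{S}_{n}$ and $T_{t\widehat{w}}=tT_{\widehat{w}}$) coincide with the paper's. The divergence is in the independence step. The paper does \emph{not} verify the defining relations for the left-multiplication operators directly: it introduces both left operators $P_{u}$ and right operators $Q_{v}$ on the free module $\mathcal{E}=\bigoplus_{\underline{w}}\mathcal{R}e_{\underline{w}}$, proves the single commutation statement $P_{u}Q_{v}=Q_{v}P_{u}$ by a six-case length analysis, and then invokes the mechanism of [Lu1, Proposition 3.3], in which that commutation is precisely what substitutes for checking the braid relations. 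Lemma \ref{lemma-juyumaya-gene} is built for exactly this: its hypotheses $\ell(s_{i}\underline{w}s_{j})=\ell(\underline{w})$ and $\ell(s_{i}\underline{w})=\ell(\underline{w}s_{j})$ describe the two ambiguous configurations (Cases 5 and 6) that arise when one multiplies by $s_{i}$ on the \emph{left} and $s_{j}$ on the \emph{right} simultaneously, and part (2) supplies the identity $s_{i}\underline{w}e_{j}=e_{i}\underline{w}s_{j}$ that equates $P_{s_{i}}Q_{s_{j}}(e_{\underline{w}})$ with $Q_{s_{j}}P_{s_{i}}(e_{\underline{w}})$ there.

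This is where your plan has a concrete gap. In your verification of \eqref{rel-def-Heckealg6} and \eqref{rel-def-Heckealg7} on $v_{\underline{w}}$, every translate that occurs is a \emph{left} translate ($s_{\overline{i}}\underline{w}$, $s_{\overline{i+1}}s_{\overline{i}}\underline{w}$, \dots); the element $\underline{w}s_{j}$ never appears, so the hypotheses of Lemma \ref{lemma-juyumaya-gene} are never met and the lemma cannot be applied as you claim. What a direct check of the braid relation actually requires is a case analysis over the six elements of the rank-two coset $\langle s_{\overline{i}},s_{\overline{i+1}}\rangle\underline{w}$, together with identities for the \emph{conjugated} idempotents that the quadratic relation produces: applying $T_{s_{\overline{i}}}$ to a term $e_{\overline{i+1}}\cdot v_{u\underline{w}}$ turns $e_{\overline{i+1}}=e_{i+1,i+2}$ into $e_{i,i+2}$, an idempotent not among $e_{0},\dots,e_{n-1}$, and one then needs relations of the type $e_{i,i+1}e_{i+1,i+2}=e_{i,i+2}e_{i+1,i+2}=e_{i,i+1}e_{i,i+2}$ (the affine analogues of Juyumaya's identities) to make the two sides of \eqref{rel-def-Heckealg7} match. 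None of this is supplied or even flagged in your proposal, and it is not a cosmetic omission: it is the entire technical content of the independence proof in your chosen route. Either carry out that rank-two analysis with the $e_{a,b}$-calculus in full, or switch to the paper's two-sided operator argument, where Lemma \ref{lemma-juyumaya-gene} genuinely does the work.
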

\begin{proof}
From Definition \ref{Definition 2-4}, it is obvious that we have
\[t_{k}T_{\underline{w}}=T_{t_{k}\underline{w}}\qquad \text{ for any }1\leq k\leq n.\]
For any $0\leq i\leq n-1,$ we have
\begin{align*}
T_{s_{i}}T_{\underline{w}}=
\left\{
\begin{array}{ll}
T_{s_{i}\underline{w}}&\text{if $\ell(s_{i}\underline{w})=\ell(\underline{w})+1$},\\[0.3em]
T_{s_{i}\underline{w}}+(q-q^{-1})T_{e_{i}\underline{w}}&\text{if $\ell(s_{i}\underline{w})=\ell(\underline{w})-1$},
\end{array}
\right.
\end{align*}
and
\[T_{\rho}^{\pm 1}T_{\underline{w}}=T_{\rho^{\pm1}\underline{w}}.\]
Thus, the $\mathcal{R}$-submodule of $\widehat{H}_{r,n}^{\mathrm{aff}}$ generated by $\{T_{\underline{w}}\:|\:\underline{w}\in \widehat{W}_{r,n}\}$ is a left ideal of $\widehat{H}_{r,n}^{\mathrm{aff}}.$ Since it contains $1=T_{1}$, it is the whole algebra $\widehat{H}_{r,n}^{\mathrm{aff}}.$ In particular, $\widehat{H}_{r,n}^{\mathrm{aff}}$ is generated by the elements $\{T_{\underline{w}}\:|\:\underline{w}\in \widehat{W}_{r,n}\}.$

Next we prove that the set $\{T_{\underline{w}}\:|\:\underline{w}\in \widehat{W}_{r,n}\}$ is an $\mathcal{R}$-basis of $\widehat{H}_{r,n}^{\mathrm{aff}}.$ Consider the free $\mathcal{R}$-module $\mathcal{E}$ with bases $(e_{\underline{w}})_{\underline{w}\in \widehat{W}_{r,n}}.$ For each $1\leq k\leq n$ and $0\leq i\leq n-1,$ we define the following $\mathcal{R}$-linear maps from $\mathcal{E}$ to $\mathcal{E}$ by
\begin{align*}
P_{t_{k}}(e_{\underline{w}})&=e_{t_{k}\underline{w}},\\
P_{s_{i}}(e_{\underline{w}})&=
\left\{
\begin{array}{ll}
e_{s_{i}\underline{w}}&\text{if $\ell(s_{i}\underline{w})=\ell(\underline{w})+1$},\\[0.3em]
e_{s_{i}\underline{w}}+(q-q^{-1})e_{e_{i}\underline{w}}&\text{if $\ell(s_{i}\underline{w})=\ell(\underline{w})-1$},
\end{array}
\right.\\
P_{\rho}^{\pm 1}(e_{\underline{w}})&=e_{\rho^{\pm1}\underline{w}},
\end{align*}
and

\begin{align*}
Q_{t_{k}}(e_{\underline{w}})&=e_{\underline{w}t_{k}},\\
Q_{s_{i}}(e_{\underline{w}})&=
\left\{
\begin{array}{ll}
e_{\underline{w}s_{i}}&\text{if $\ell(\underline{w}s_{i})=\ell(\underline{w})+1$},\\[0.3em]
e_{\underline{w}s_{i}}+(q-q^{-1})e_{\underline{w}e_{i}}&\text{if $\ell(\underline{w}s_{i})=\ell(\underline{w})-1$},
\end{array}
\right.\\
Q_{\rho}^{\pm 1}(e_{\underline{w}})&=e_{\underline{w}\rho^{\pm1}}.
\end{align*}

Set $\widehat{S}^{\mathrm{aff}} :=\{t_{1},\ldots,t_{n}\}\cup \{\rho^{\pm 1}\}\cup S^{\mathrm{aff}}.$ We first prove the following claim:

$\bold{Claim(a)}$: $P_{u}Q_{v}=Q_{v}P_{u}$ for any $u ,v\in \widehat{S}^{\mathrm{aff}}.$

When either $u$ or $v$ belongs to the set $\{t_{1},\ldots,t_{n}\}\cup \{\rho^{\pm 1}\},$ it is easy to check that $\bold{Claim(a)}$ holds. Thus, it suffices to check that $P_{s_{i}}Q_{s_{j}}=Q_{s_{j}}P_{s_{i}}$ for any $s_{i}, s_{j}\in S^{\mathrm{aff}}.$ This can be proved by distinguishing the following six cases. Let $\underline{w}\in \widehat{W}_{r,n}.$

\noindent$\emph{Case}$ 1. $s_{i}\underline{w}s_{j},$ $s_{i}\underline{w},$ $\underline{w}s_{j},$ $\underline{w}$ have lengths $q+2,$ $q+1,$ $q+1,$ $q$. Then we have
\[P_{s_{i}}Q_{s_{j}}(e_{\underline{w}})=Q_{s_{j}}P_{s_{i}}(e_{\underline{w}})=e_{s_{i}\underline{w}s_{j}}.\]

\noindent$\emph{Case}$ 2. $\underline{w}$, $s_{i}\underline{w},$ $\underline{w}s_{j},$ $s_{i}\underline{w}s_{j}$ have lengths $q+2,$ $q+1,$ $q+1,$ $q$. Then we have
\begin{align*}
&P_{s_{i}}Q_{s_{j}}(e_{\underline{w}})=Q_{s_{j}}P_{s_{i}}(e_{\underline{w}})\\
&=e_{s_{i}\underline{w}s_{j}}+(q-q^{-1})e_{s_{i}\underline{w}e_{j}}+(q-q^{-1})e_{e_{i}\underline{w}s_{j}}+(q-q^{-1})^{2}e_{e_{i}\underline{w}e_{j}}.
\end{align*}

\noindent$\emph{Case}$ 3. $\underline{w}s_{j},$ $s_{i}\underline{w}s_{j}$, $\underline{w}$, $s_{i}\underline{w}$ have lengths $q+2,$ $q+1,$ $q+1,$ $q$. Then we have
\begin{align*}
P_{s_{i}}Q_{s_{j}}(e_{\underline{w}})=Q_{s_{j}}P_{s_{i}}(e_{\underline{w}})=
e_{s_{i}\underline{w}s_{j}}+(q-q^{-1})e_{e_{i}\underline{w}s_{j}}.
\end{align*}

\noindent$\emph{Case}$ 4. $s_{i}\underline{w},$ $s_{i}\underline{w}s_{j}$, $\underline{w}$, $\underline{w}s_{j}$ have lengths $q+2,$ $q+1,$ $q+1,$ $q$. Then we have
\begin{align*}
P_{s_{i}}Q_{s_{j}}(e_{\underline{w}})=Q_{s_{j}}P_{s_{i}}(e_{\underline{w}})=
e_{s_{i}\underline{w}s_{j}}+(q-q^{-1})e_{s_{i}\underline{w}e_{j}}.
\end{align*}

\noindent$\emph{Case}$ 5. $s_{i}\underline{w}s_{j}$, $\underline{w}$, $\underline{w}s_{j}$, $s_{i}\underline{w}$ have lengths $q+1,$ $q+1,$ $q,$ $q$. Then we have
\begin{align*}
P_{s_{i}}Q_{s_{j}}(e_{\underline{w}})=e_{s_{i}\underline{w}s_{j}}+(q-q^{-1})e_{s_{i}\underline{w}e_{j}}+(q-q^{-1})^{2}e_{e_{i}\underline{w}e_{j}},
\end{align*}
and
\begin{align*}
Q_{s_{j}}P_{s_{i}}(e_{\underline{w}})=
e_{s_{i}\underline{w}s_{j}}+(q-q^{-1})e_{e_{i}\underline{w}s_{j}}+(q-q^{-1})^{2}e_{e_{i}\underline{w}e_{j}}.
\end{align*}
By Lemma \ref{lemma-juyumaya-gene}(2), we have $s_{i}\underline{w}e_{j}=e_{i}\underline{w}s_{j}.$ Thus, we get that $P_{s_{i}}Q_{s_{j}}(e_{\underline{w}})=Q_{s_{j}}P_{s_{i}}(e_{\underline{w}}).$

\noindent$\emph{Case}$ 6. $s_{i}\underline{w}$, $\underline{w}s_{j}$, $\underline{w}$, $s_{i}\underline{w}s_{j}$ have lengths $q+1,$ $q+1,$ $q,$ $q$. Then we have
\begin{align*}
P_{s_{i}}Q_{s_{j}}(e_{\underline{w}})=e_{s_{i}\underline{w}s_{j}}+(q-q^{-1})e_{e_{i}\underline{w}s_{j}},
\end{align*}
and
\begin{align*}
Q_{s_{j}}P_{s_{i}}(e_{\underline{w}})=
e_{s_{i}\underline{w}s_{j}}+(q-q^{-1})e_{s_{i}\underline{w}e_{j}}.
\end{align*}
Hence, we also get that $P_{s_{i}}Q_{s_{j}}(e_{\underline{w}})=Q_{s_{j}}P_{s_{i}}(e_{\underline{w}})$ by Lemma \ref{lemma-juyumaya-gene}(2).

Thus, we have proved the $\bold{Claim(a)}$. Then we can repeat the arguments as done in the proof of [Lu1, Proposition 3.3] to conclude that the set $\{T_{\underline{w}}\:|\:\underline{w}\in \widehat{W}_{r,n}\}$ is an $\mathcal{R}$-basis of $\widehat{H}_{r,n}^{\mathrm{aff}}.$ Since the procedure is routine, we shall skip the details.
\end{proof}

Set $S^{\mathrm{aff}}(1) :=\{ts\:|\:t\in \mathcal{T}\text{ and }s\in S^{\mathrm{aff}}\}.$ Take $q_{ts_{i}} :=1$ and $c_{ts_{i}} :=(q-q^{-1})te_{i}$ for all $0\leq i\leq n-1$ and $t\in \mathcal{T}.$ It has been proved in [ChS, Section 4] that $q_{ts_{i}}$'s and $c_{ts_{i}}$'s satisfy the conditions [Vi2, Theorem 2.4(A)(1)-(2)].

Multiplying two sides of \eqref{rel-def-Heckealg8} by $T_{t}T_{{}^{s_{i}}\!t}$ and noting that $T_{{}^{s_{i}}\!t}e_{i}=e_{i}T_{t}$, we get that
\begin{align}\label{main-theorem-iden24}
T_{ts_{i}}^{2}=(ts_{i})^{2}+c_{ts_{i}}T_{ts_{i}} \quad \text{ for all }0\leq i\leq n-1 \text{ and }t\in \mathcal{T}.
\end{align}
Hence, from the presentation of $\widehat{H}_{r,n}^{\mathrm{aff}}$ given in Definition 2.4 and \eqref{main-theorem-iden24}, we see that the $\mathcal{R}$-bases $\{T_{\underline{w}}\:|\:\underline{w}\in \widehat{W}_{r,n}\}$ of $\widehat{H}_{r,n}^{\mathrm{aff}}$ satisfy the braid and quadratic relations in [Vi2, Theorem 2.4(B)].

Thus, from Theorem \ref{iwahori-matsumoto-pre} we immediately get the following corollary, which was previously proved in [ChS, Theorem 4.1].
\begin{corollary}
The affine Yokonuma-Hecke algebra $\widehat{Y}_{r,n}$ is a particular case of the pro-$p$-Iwahori-Hecke algebras.
\end{corollary}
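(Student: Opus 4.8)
The plan is to read the result off directly from Theorem \ref{iwahori-matsumoto-pre} together with the structural data assembled immediately above, matching them against Vign\'eras' axiomatic description [Vi2, Theorem 2.4] of a pro-$p$-Iwahori-Hecke algebra. That theorem characterizes such an algebra as a free module over the coefficient ring, equipped with a basis indexed by an Iwahori-Weyl group, on which the multiplication is governed by braid relations and by quadratic relations whose parameters $q_{ts_i}$ and $c_{ts_i}$ obey the compatibility conditions of part (A); so it suffices to exhibit $\widehat{H}_{r,n}^{\mathrm{aff}}$ as precisely such an algebra and then transport the conclusion along the isomorphism $\Phi$.

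First I would use Theorem \ref{iwahori-matsumoto-pre} to identify $\widehat{Y}_{r,n}$ with $\widehat{H}_{r,n}^{\mathrm{aff}}$; since the property of being a pro-$p$-Iwahori-Hecke algebra is invariant under $\mathcal{R}$-algebra isomorphism, it is enough to treat $\widehat{H}_{r,n}^{\mathrm{aff}}$. I would then take $\widehat{W}_{r,n}$ (isomorphic to $\widetilde{W}_{r,n}$) as the underlying Iwahori-Weyl group, with distinguished generating set $S^{\mathrm{aff}}(1)$ and parameters $q_{ts_i}=1$, $c_{ts_i}=(q-q^{-1})te_i$. Proposition \ref{coxeter-lemma-iso-pbwnbasis-4} supplies the required free $\mathcal{R}$-basis $\{T_{\underline{w}} \mid \underline{w}\in \widehat{W}_{r,n}\}$, and the multiplication formulas for $t_k T_{\underline{w}}$, $T_{s_i}T_{\underline{w}}$ and $T_{\rho}^{\pm1}T_{\underline{w}}$ recorded in its proof, together with the quadratic identity \eqref{main-theorem-iden24}, are exactly the braid and quadratic relations demanded by [Vi2, Theorem 2.4(B)].

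All the hypotheses of Vign\'eras' characterization are thereby in place: the parameters satisfy [Vi2, Theorem 2.4(A)(1)--(2)] (checked following [ChS, Section 4]), and the standard basis satisfies the relations of part (B). Invoking the theorem, $\widehat{H}_{r,n}^{\mathrm{aff}}$ is a pro-$p$-Iwahori-Hecke algebra, and pulling back through $\Phi$ gives the same conclusion for $\widehat{Y}_{r,n}$. The substantive work lies not in this corollary but in the two preparatory results it rests on: the detailed relation-checking behind the isomorphism of Theorem \ref{iwahori-matsumoto-pre} and the basis construction of Proposition \ref{coxeter-lemma-iso-pbwnbasis-4} (which in turn relies on the length-function combinatorics of Lemma \ref{lemma-juyumaya-gene}). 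The only point requiring care here is to confirm that the passage from \eqref{rel-def-Heckealg8} to \eqref{main-theorem-iden24}, via the twisted commutation $T_{{}^{s_i}t}e_i = e_i T_t$, reproduces Vign\'eras' normalization of the quadratic relation $T_{ts_i}^2 = (ts_i)^2 + c_{ts_i}T_{ts_i}$ with exactly the prescribed coefficient.
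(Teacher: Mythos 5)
Your proposal follows essentially the same route as the paper: identify $\widehat{Y}_{r,n}$ with $\widehat{H}_{r,n}^{\mathrm{aff}}$ via Theorem \ref{iwahori-matsumoto-pre}, take $\widehat{W}_{r,n}$ with generating set $S^{\mathrm{aff}}(1)$ and parameters $q_{ts_i}=1$, $c_{ts_i}=(q-q^{-1})te_i$ (citing [ChS, Section 4] for the conditions of [Vi2, Theorem 2.4(A)]), use the basis of Proposition \ref{coxeter-lemma-iso-pbwnbasis-4} and the renormalized quadratic relation \eqref{main-theorem-iden24} to verify part (B), and conclude by Vign\'eras' characterization. This matches the paper's argument in all essentials, including the point of care you flag about passing from \eqref{rel-def-Heckealg8} to \eqref{main-theorem-iden24} via $T_{{}^{s_i}\!t}e_i=e_iT_t$.
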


\subsection{One application}

let $\mathbb{K}$ be an algebraically closed field of characteristic $p$ such that $p$ does not divide $r.$ In this subsection, we shall consider the specializations over $\mathbb{K}$ of various algebras $\widehat{Y}_{r,n},$ $\widehat{H}_{r,n}^{\mathrm{aff}}$, and so on; moreover, we shall denote the specialization algebras with the same symbols.

We first review some constructions presented in [JaPA, Section 2]. Assume that $\{\zeta_1,\ldots,\zeta_{r}\}$ is the set of all $r$-th roots of unity. A character $\chi$ of $\mathcal{T}$ over $\mathbb{K}$ is determined by the values $\chi(t_{j})\in \{\zeta_1,\ldots,\zeta_{r}\}$ for $1\leq j\leq n.$ We denote by $\text{Irr}(\mathcal{T})$ the set of characters of $\mathcal{T}$ over $\mathbb{K}$.

By Lemma \ref{coxeter-lemma-iso1}, we will identify $\widehat{W}$ with $\mathfrak{X}\rtimes \mathfrak{S}_{n}.$ We have a natural group homomorphism $\sigma$ from $\widehat{W}$ to $\mathfrak{S}_{n},$ which is defined by
\[\sigma(X_{j})=1\quad \text{ for }1\leq j\leq n\quad\text{and}\quad \sigma(s_{i})=s_{i}\text{ for }1\leq i\leq n-1.\]
Moreover, we have an action of $\mathfrak{S}_{n}$ on $\mathcal{T}$ by permutations, which in turn induces an action of $\mathfrak{S}_{n}$ on $\text{Irr}(\mathcal{T})$ given by
\[w(\chi)(t_i)=\chi(t_{w^{-1}(i)})\quad \text{ for all }w\in \mathfrak{S}_{n}, \chi\in \text{Irr}(\mathcal{T})\text{ and }1\leq i\leq n.\]
Thus, we get an action of $\widehat{W}$ on $\text{Irr}(\mathcal{T})$ by composing $\sigma$ and the action of $\mathfrak{S}_{n}$ on $\text{Irr}(\mathcal{T})$ defined above.

For each $\chi\in \text{Irr}(\mathcal{T})$, the primitive idempotent $E_{\chi}$ of $\mathcal{T}$ associated to $\chi$ can be explicitly written as follows:
\begin{equation}\label{idempotents-cuiiuc}
E_{\chi}=\prod_{1\leq i\leq n}\bigg(\frac{1}{r}\sum_{0\leq s\leq r-1}\chi(t_{i})^{s}t_{i}^{-s}\bigg).\end{equation}
Then, the set $\{E_{\chi}\:|\:\chi\in \text{Irr}(\mathcal{T})\}$ forms a complete set of orthogonal idempotents, and is a $\mathbb{K}$-basis of $\mathbb{K}\mathcal{T},$ where we identify the group algebra $\mathbb{K}\mathcal{T}$ of $\mathcal{T}$ over $\mathbb{K}$ with the subalgebra of $\widehat{H}_{r,n}^{\mathrm{aff}}$ generated by $t_{1},\ldots,t_{n}.$

\begin{lemma}\label{another-basis-cui}
The elements $\{E_{\chi}T_{\widehat{w}}\:|\:\chi\in \mathrm{Irr}(\mathcal{T})\text{ and }\widehat{w}\in \widehat{W}\}$ is a $\mathbb{K}$-basis of $\widehat{H}_{r,n}^{\mathrm{aff}}.$
\end{lemma}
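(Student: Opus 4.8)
The plan is to deduce this change-of-basis statement from Proposition \ref{coxeter-lemma-iso-pbwnbasis-4}, which already provides the $\mathbb{K}$-basis $\{t_{1}^{\beta_{1}}\cdots t_{n}^{\beta_{n}}T_{\widehat{w}}\:|\:0\leq\beta_{i}\leq r-1,~\widehat{w}\in \widehat{W}\}$. The key observation is that both proposed spanning sets are indexed by the same set $\mathrm{Irr}(\mathcal{T})\times \widehat{W}$ (recall that a character is determined by its values on $t_{1},\ldots,t_{n}$, so $|\mathrm{Irr}(\mathcal{T})|=r^{n}$, which matches the number of monomials $t^{\beta}$), and that for each fixed $\widehat{w}$, the transition between the two families is governed entirely by the change of basis within the group algebra $\mathbb{K}\mathcal{T}$.

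First I would fix a reduced expression and recall that $T_{\widehat{w}}$ is a well-defined element of $\widehat{H}_{r,n}^{\mathrm{aff}}$ for each $\widehat{w}\in \widehat{W}$, as established before Lemma \ref{lemma-juyumaya-gene}. Since the $E_{\chi}$ and the monomials $t^{\beta}=t_{1}^{\beta_{1}}\cdots t_{n}^{\beta_{n}}$ are two $\mathbb{K}$-bases of $\mathbb{K}\mathcal{T}$ (the former by the statement following \eqref{idempotents-cuiiuc}), there is an invertible $r^{n}\times r^{n}$ matrix $A=(a_{\chi,\beta})$ over $\mathbb{K}$ with $E_{\chi}=\sum_{\beta}a_{\chi,\beta}\,t^{\beta}$. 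Multiplying on the right by $T_{\widehat{w}}$ gives
\[
E_{\chi}T_{\widehat{w}}=\sum_{\beta}a_{\chi,\beta}\,t^{\beta}T_{\widehat{w}},
\]
so the family $\{E_{\chi}T_{\widehat{w}}\}$ is obtained from $\{t^{\beta}T_{\widehat{w}}\}$ by applying, for each fixed $\widehat{w}$, the same invertible matrix $A$ on the $\mathcal{T}$-index. Concretely, ordering the basis of $\widehat{H}_{r,n}^{\mathrm{aff}}$ by blocks indexed by $\widehat{w}\in \widehat{W}$, the transition matrix from the old basis to the new family is block-diagonal with every diagonal block equal to $A$; its determinant is a power of $\det(A)\neq 0$, hence it is invertible.

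Therefore $\{E_{\chi}T_{\widehat{w}}\}$ spans the same $\mathbb{K}$-space as the basis of Proposition \ref{coxeter-lemma-iso-pbwnbasis-4} and is itself linearly independent, so it is a $\mathbb{K}$-basis of $\widehat{H}_{r,n}^{\mathrm{aff}}$. I do not anticipate a serious obstacle here, since the argument is purely a block-diagonal change of basis; the only point requiring a word of care is confirming that the indexing sets genuinely match, i.e. that $E_{\chi}$ ranges over exactly $r^{n}$ orthogonal primitive idempotents forming a basis of $\mathbb{K}\mathcal{T}$ (which uses $p\nmid r$, so that the $r$-th roots of unity are distinct in $\mathbb{K}$ and the idempotents \eqref{idempotents-cuiiuc} are well-defined and complete), and that $T_{\widehat{w}}$ for $\widehat{w}\in \widehat{W}$ supplies exactly the second tensor factor of the basis. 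Once that bookkeeping is in place, invertibility of the block-diagonal transition matrix finishes the proof.
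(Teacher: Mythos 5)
Your argument is correct and rests on the same two inputs as the paper's own proof: the basis of Proposition \ref{coxeter-lemma-iso-pbwnbasis-4} and the fact that the idempotents $E_{\chi}$ form a $\mathbb{K}$-basis of $\mathbb{K}\mathcal{T}$ (which, as you note, uses $p\nmid r$). The only cosmetic difference is that you invert the block-diagonal transition matrix directly (in one step giving both spanning and independence), whereas the paper first observes spanning and then proves linear independence by contradiction, multiplying a putative dependence relation by $E_{\chi_{1}}$ and using the orthogonality of the idempotents to reduce to the monomial basis $\{t_{1}^{\beta_{1}}\cdots t_{n}^{\beta_{n}}T_{\widehat{w}}\}$.
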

\begin{proof}
By Proposition \ref{coxeter-lemma-iso-pbwnbasis-4} and the claims above, we see that $\widehat{H}_{r,n}^{\mathrm{aff}}$ is generated by the elements $\{E_{\chi}T_{\widehat{w}}\}.$ Thus, it suffices to prove that they are linearly independent. If they are linearly dependent, that is, there exist some $a_{ij}\in \mathbb{K}$ such that
\begin{align}\label{linear-indepen}
\sum_{i,j}a_{ij}E_{\chi_{i}}T_{\widehat{w}_{j}}=0,
\end{align}
where $a_{ij}$ are not all zero. We might as well assume that $a_{11}\neq 0.$ Multiplying two sides of (\ref{linear-indepen}) by $E_{\chi_{1}}$, we get that $\sum_{j}a_{1j}E_{\chi_{1}}T_{\widehat{w}_{j}}=0.$ But $E_{\chi_{1}}$ can be written as a linear combination of some $t_{1}^{\alpha_{1}}\cdots t_{n}^{\alpha_{n}}$'s. Thus, by the equality above we can easily get that the elements $\{t_{1}^{\beta_{1}}\cdots t_{n}^{\beta_{n}}T_{\widehat{w}}\}$ are linearly dependent. This is a contradiction. We are done.
\end{proof}

An $r$-composition of $n,$ denoted by $\mu \models n$, is an $r$-tuple $\mu=(\mu_1,\ldots,\mu_{r})\in \mathbb{Z}_{\geq 0}^{r}$ such that $\Sigma_{1\leq a\leq r}\mu_{a}=n.$ Let $\mathcal{C}_{r,n}$ be the set of $r$-compositions of $n.$ Assume that $\chi\in \text{Irr}(\mathcal{T}).$ For $a\in \{1,\ldots,r\},$ let $\mu_a$ be the cardinal of elements $j\in \{1,\ldots,n\}$ such that $\chi(t_j)=\zeta_a.$ Then the sequence $(\mu_1,\ldots,\mu_r)\in \mathcal{C}_{r,n}$, and we denote it by $\text{Comp}(\chi).$

For each $\mu \models n,$ we define a particular character $\chi_{1}^{\mu}\in \text{Irr}(\mathcal{T})$ by
\begin{equation}\label{chi1-mu-cui}
\left\{\begin{array}{ccccccc}
\chi_1^{\mu} (t_1)&=&\ldots & =& \chi_1^{\mu} (t_{\mu_1})&=& \zeta_1\ ,\\[0.2em]
\chi_1^{\mu} (t_{\mu_1+1})&=&\ldots & =& \chi_1^{\mu} (t_{\mu_1+\mu_2})&=& \zeta_2\ ,\\
\vdots &\vdots &\vdots &\vdots &\vdots &\vdots &\vdots  \\
\chi_1^{\mu} (t_{\mu_1+\dots+\mu_{r-1}+1})&=&\ldots & =& \chi_1^{\mu} (t_{n})&=& \zeta_r\ .\\
\end{array}\right.
\end{equation}
Notice that $\text{Comp}(\chi_{1}^{\mu})=\mu.$ From \eqref{chi1-mu-cui}, we see that the stabilizer of $\chi_{1}^{\mu}$ under the action of $\mathfrak{S}_{n}$ is the Young subgroup $\mathfrak{S}^{\mu},$ which is defined to be $\mathfrak{S}_{\mu_1}\times\cdots\times\mathfrak{S}_{\mu_r}$. Notice that there is a unique representative of minimal length in each left coset in $\mathfrak{S}_{n}/\mathfrak{S}^{\mu}.$ We shall denote these distinguished left coset representatives by $\{\pi_{1,\mu},\ldots,\pi_{m_{\mu}, \mu}\}$ by the convention that $\pi_{1,\mu}=1$ and set $\chi_{k}^{\mu} :=\pi_{k,\mu}(\chi_{1}^{\mu})$ for $1\leq k\leq m_{\mu}.$

For each $\mu \models n,$ we set
\begin{equation*}
E_{\mu} :=\sum_{\text{Comp}(\chi)=\mu}E_{\chi}.
\end{equation*}
Then the set $\{E_{\mu}\:|\:\mu\in \mathcal{C}_{r,n}\}$ forms a complete set of pairwise orthogonal central idempotents in $\widehat{H}_{r,n}^{\mathrm{aff}}$. In particular, we have the following decomposition of $\widehat{H}_{r,n}^{\mathrm{aff}}$ into a direct sum of two-sided ideals:
\begin{equation}\label{direct-sum-cui}
\widehat{H}_{r,n}^{\mathrm{aff}}=\bigoplus_{\text{Comp}(\chi)=\mu}E_{\mu}\widehat{H}_{r,n}^{\mathrm{aff}}.
\end{equation}
Moreover, the following elements
\[\{E_{\chi_{k}^{\mu}}T_{\widehat{w}}\:|\:1\leq k\leq m_{\mu}\text{ and }\widehat{w}\in \widehat{W}\}\]
form a $\mathbb{K}$-basis of $E_{\mu}\widehat{H}_{r,n}^{\mathrm{aff}}.$

Let $\widehat{\mathcal{H}}_{n}$ denote the extended affine Hecke algebra of type $A$ associated to $\widehat{W}$ over $\mathbb{K}$, which is endowed with a standard basis $\{S_{\widehat{w}}\:|\:\widehat{w}\in \widehat{W}\}$. For each $\mu \models n,$ we denote by $\widehat{H}^{\mu}$ the $\mathbb{K}$-subalgebra of $\widehat{H}_{n}$ generated by the elements $\{S_{\widehat{w}}\:|\:\widehat{w}\in \widehat{\mathfrak{S}^{\mu}}\}$, where $\widehat{\mathfrak{S}^{\mu}} :=\mathbb{Z}^{n}\rtimes \mathfrak{S}^{\mu}$ is the subgroup of $\widehat{W},$ which is exactly the stabilizer of $\chi_{1}^{\mu}$ under the action of $\widehat{W}.$ The algebra $\widehat{H}^{\mu}$ is naturally isomorphic to $\widehat{\mathcal{H}}_{\mu_{1}}\otimes \cdots \otimes\widehat{\mathcal{H}}_{\mu_{r}}.$

The following lemma can easily be proved by a direct calculation.
\begin{lemma}
Let $\mu \models n.$ There exists an algebra isomorphism \[\phi_{\mu}: \widehat{H}^{\mu}\overset{\sim}{\longrightarrow} E_{\chi_{1}^{\mu}}\widehat{H}_{r,n}^{\mathrm{aff}}E_{\chi_{1}^{\mu}}, \] which is defined by $\phi_{\mu}(S_{\widehat{w}})=E_{\chi_{1}^{\mu}}T_{\widehat{w}}E_{\chi_{1}^{\mu}}$ for any $\widehat{w}\in \widehat{\mathfrak{S}^{\mu}}.$
\end{lemma}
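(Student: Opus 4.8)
The plan is to verify that the linear map $\phi_\mu$ sending the standard basis element $S_{\widehat{w}}$ to $E_{\chi_1^\mu}T_{\widehat{w}}E_{\chi_1^\mu}$ is a bijection onto the corner algebra $E_{\chi_1^\mu}\widehat{H}_{r,n}^{\mathrm{aff}}E_{\chi_1^\mu}$ and is multiplicative; since that corner is a subalgebra with unit $E_{\chi_1^\mu}$, these two facts give the isomorphism. Everything is driven by two ``collapse'' identities for $E_{\chi_1^\mu}$. First, the commutation relations \eqref{rel-def-Heckealg3}--\eqref{rel-def-Heckealg4} show that conjugation by $T_{\widehat{w}}$ permutes the $t_j$ by $\sigma(\widehat{w})$, whence $T_{\widehat{w}}E_\chi=E_{\widehat{w}(\chi)}T_{\widehat{w}}$ for all $\widehat{w}\in\widehat{W}$; as $\widehat{\mathfrak{S}^\mu}$ is by construction the stabiliser of $\chi_1^\mu$, this specialises to $T_{\widehat{w}}E_{\chi_1^\mu}=E_{\chi_1^\mu}T_{\widehat{w}}$ for $\widehat{w}\in\widehat{\mathfrak{S}^\mu}$, and in particular $\phi_\mu(S_{\widehat{w}})=E_{\chi_1^\mu}T_{\widehat{w}}$. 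Secondly, from $t_iE_{\chi_1^\mu}=\chi_1^\mu(t_i)E_{\chi_1^\mu}$ one computes that $e_iE_{\chi_1^\mu}=E_{\chi_1^\mu}$ when $\chi_1^\mu(t_i)=\chi_1^\mu(t_{i+1})$ (i.e. $i,i+1$ lie in the same block of $\mu$, equivalently $s_i\in\mathfrak{S}^\mu$) and $e_iE_{\chi_1^\mu}=0$ otherwise. The within-block case is what turns the Yokonuma quadratic relation \eqref{rel-def-Heckealg8} into the ordinary Hecke relation on the corner, while the cross-block case will be used to discard unwanted terms.

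I would first dispose of bijectivity. By Lemma \ref{another-basis-cui} the elements $E_\chi T_{\widehat{w}}$ ($\chi\in\mathrm{Irr}(\mathcal{T})$, $\widehat{w}\in\widehat{W}$) form a basis. Sandwiching one between two copies of $E_{\chi_1^\mu}$ and using orthogonality $E_{\chi_1^\mu}E_\chi=\delta_{\chi,\chi_1^\mu}E_{\chi_1^\mu}$ together with the first collapse gives $E_{\chi_1^\mu}(E_\chi T_{\widehat{w}})E_{\chi_1^\mu}=\delta_{\chi,\chi_1^\mu}\,E_{\chi_1^\mu}T_{\widehat{w}}$ when $\widehat{w}\in\widehat{\mathfrak{S}^\mu}$ and $0$ otherwise. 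Hence $E_{\chi_1^\mu}\widehat{H}_{r,n}^{\mathrm{aff}}E_{\chi_1^\mu}$ is spanned by $\{E_{\chi_1^\mu}T_{\widehat{w}}\mid\widehat{w}\in\widehat{\mathfrak{S}^\mu}\}$, and these are linearly independent as a subfamily of the basis of Lemma \ref{another-basis-cui}; so they are a basis of the corner. This basis is exactly the image under $\phi_\mu$ of the basis $\{S_{\widehat{w}}\mid\widehat{w}\in\widehat{\mathfrak{S}^\mu}\}$ of $\widehat{H}^\mu$, and therefore $\phi_\mu$ is a linear isomorphism.

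For multiplicativity I would pass through the isomorphism $\Phi$ of Theorem \ref{iwahori-matsumoto-pre} and the decomposition $\widehat{H}^\mu\cong\widehat{\mathcal{H}}_{\mu_1}\otimes\cdots\otimes\widehat{\mathcal{H}}_{\mu_r}$, taking as algebra generators of $\widehat{H}^\mu$ the within-block Hecke generators $g_i$ ($s_i\in\mathfrak{S}^\mu$) together with the translations $X_j$. The second collapse shows $E_{\chi_1^\mu}$ commutes with every such $g_i$ (since $\chi_1^\mu$ is $s_i$-invariant, by \eqref{rel-def-Y4}) and with every $X_j$ (by \eqref{xyyx}); the quadratic relation \eqref{rel-def-Y6} becomes $(E_{\chi_1^\mu}g_i)^2=E_{\chi_1^\mu}+(q-q^{-1})E_{\chi_1^\mu}g_i$ via $\bar e_iE_{\chi_1^\mu}=E_{\chi_1^\mu}$; and the braid relations \eqref{rel-def-Y2}, the commutativity \eqref{xyyx}, the defining relation $X_{i+1}=g_iX_ig_i$ of \eqref{X2n}, the commutations \eqref{giXj}, and the across-block relations \eqref{rel-def-Y1} all persist verbatim on the corner because they carry no idempotent factor. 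This shows that the assignment on generators extends to an algebra homomorphism into $E_{\chi_1^\mu}\widehat{H}_{r,n}^{\mathrm{aff}}E_{\chi_1^\mu}$, so $\phi_\mu$ will be multiplicative once it is identified with this homomorphism.

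The main obstacle is precisely that last identification: $S_{\widehat{w}}$ is an \emph{Iwahori--Matsumoto} element, whereas the generators $X_j$ are \emph{Bernstein} elements, and for a translation $\widehat{w}=X^\lambda$ one has $\Phi(T_{\widehat{w}})\neq X^\lambda$ in $\widehat{Y}_{r,n}$ in general (indeed $\widehat{\mathfrak{S}^\mu}$ has no within-block descents acting on a pure translation, so no naive length induction along block generators is available). What rescues the argument is the cross-block collapse: the discrepancy between $\Phi(T_{\widehat{w}})$ and the corresponding Bernstein monomial is a combination of terms each carrying a factor $\bar e_i$ with $i,i+1$ in \emph{different} blocks, and every such term is annihilated by $E_{\chi_1^\mu}$. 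Thus $E_{\chi_1^\mu}\Phi(T_{\widehat{w}})$ equals $E_{\chi_1^\mu}$ times the Bernstein monomial, which is exactly $\phi_\mu(S_{\widehat{w}})$ as computed through the generator presentation. Carrying this out carefully, by induction on $\ell(\widehat{w})$ while tracking how the Iwahori--Matsumoto product expands the translation part and checking that only the within-block terms survive, is the only place where real computation is needed; the case $n=2$, $\mu=(1,1)$, where $\Phi(T_{X_1})=X_1+(q-q^{-1})\bar e_1 g_1X_1$ and $\bar e_1E_{\chi_1^\mu}=0$, already exhibits the entire mechanism.
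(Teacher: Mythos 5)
The paper offers no argument for this lemma beyond the sentence that it ``can easily be proved by a direct calculation,'' so there is no written proof to compare against; judged on its own, your strategy is the right one and its first half is complete. Writing $E:=E_{\chi_{1}^{\mu}}$, the relations $T_{\widehat{w}}E_{\chi}=E_{\widehat{w}(\chi)}T_{\widehat{w}}$ and the orthogonality of the idempotents do give $ET_{\widehat{w}}E=ET_{\widehat{w}}$ for $\widehat{w}\in\widehat{\mathfrak{S}^{\mu}}$ and $ET_{\widehat{w}}E=0$ otherwise, and combined with Lemma \ref{another-basis-cui} this identifies $\{ET_{\widehat{w}}\mid\widehat{w}\in\widehat{\mathfrak{S}^{\mu}}\}$ as a basis of the corner, so $\phi_{\mu}$ is a linear bijection. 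You also correctly isolate the point the paper glosses over: $\widehat{\mathfrak{S}^{\mu}}$ is not a standard parabolic subgroup of $\widehat{W}$, reduced expressions do not restrict to it, and multiplicativity is therefore not the one-line check it is in the finite case treated in [JaPA].

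The gap is in the final identification $\theta(S_{\widehat{w}})=ET_{\widehat{w}}$, which you leave as a sketch, and the sketch as stated is not quite right: the discrepancy between $T_{\widehat{w}}$ and the Bernstein expression of $S_{\widehat{w}}$ is \emph{not} made up solely of cross-block terms killed by $E$. Take $n=3$, $\mu=(2,1)$ and $\widehat{w}=X_{1}$. In $\widehat{H}^{\mu}\cong\widehat{\mathcal{H}}_{2}\otimes\widehat{\mathcal{H}}_{1}$ the translation $X_{1}$ has length $1$ inside its block, so $S_{X_{1}}$ is not the Bernstein monomial $X_{1}$ but $S_{X_{1}}=X_{1}+(q-q^{-1})T_{s_{1}}X_{1}$; on the other side, $EX_{1}=ET_{s_{1}}^{-1}T_{s_{2}}^{-1}T_{\rho}=ET_{X_{1}}-(q-q^{-1})ET_{s_{2}}T_{\rho}$, because $Ee_{1}=E$ is a \emph{within-block} collapse that produces a surviving correction term rather than annihilating it. That unwanted term is cancelled by $(q-q^{-1})ET_{s_{1}}X_{1}=(q-q^{-1})ET_{s_{2}}T_{\rho}$, not discarded. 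The correct mechanism is therefore: cross-block $e_{i}$'s collapse to $0$, within-block $e_{i}$'s collapse to $1$, and after both collapses the verification reduces blockwise to the classical $r=1$ dictionary between the Iwahori--Matsumoto and Bernstein presentations of an extended affine Hecke algebra, where this cancellation is already known. Your induction on length can be organized around that reduction, but as written your proposal would lead you to drop terms that in fact survive and must cancel for a different reason; your test case $\mu=(1,1)$ hides this because every block has size one, so there are no within-block generators at all.
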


For each $\mu \models n,$ let $\text{Mat}_{m_{\mu}}(\widehat{H}^{\mu})$ be the algebra of matrices of size $m_{\mu}$ with coefficients in $\widehat{H}^{\mu}.$ We define a linear map
\begin{equation*}
\Phi_{\mu}: E_{\mu}\widehat{H}_{r,n}^{\mathrm{aff}}\rightarrow \text{Mat}_{m_{\mu}}(\widehat{H}^{\mu})
\end{equation*}
by
\begin{equation}\label{Phi-mu-cui}
\Phi_{\mu}(E_{\chi_{k}^{\mu}}T_{\widehat{w}})=S_{\pi_{k,\mu}^{-1}\widehat{w}\pi_{j,\mu}}M_{k,j},
\end{equation}
where $j\in \{1,\ldots,m_{\mu}\}$ is the unique number such that $\widehat{w}(\chi_{j}^{\mu})=\chi_{k}^{\mu}$ for given $k,$ and $M_{k,j}$ denotes the elementary matrix with $1$ in the position $(k,j).$

We also define a linear map
\begin{equation*}
\Psi_{\mu}: \text{Mat}_{m_{\mu}}(\widehat{H}^{\mu})\rightarrow E_{\mu}\widehat{H}_{r,n}^{\mathrm{aff}}
\end{equation*}
by
\begin{equation}\label{Psi-mu-cui}
\Psi_{\mu}((S_{\widehat{w}_{i,j}})_{1\leq i,j\leq m_{\mu}})=\sum_{1\leq i,j\leq m_{\mu}}E_{\chi_{i}^{\mu}}T_{\pi_{i,\mu}\widehat{w}_{i,j}\pi_{j,\mu}^{-1}}E_{\chi_{j}^{\mu}}
\end{equation}
for $\widehat{w}_{i,j}\in \widehat{\mathfrak{S}^{\mu}}.$

We define the linear maps $\Phi_{r, n} :=\bigoplus_{\mu\in \mathcal{C}_{r,n}}\Phi_{\mu}$ and $\Psi_{r, n} :=\bigoplus_{\mu\in \mathcal{C}_{r,n}}\Psi_{\mu}.$ The following theorem can be proved in exactly the same way as in [JaPA, Theorem 3.1], and we skip the details.
\begin{theorem}\label{isomorphsim-theorem1-cui}
For $\mu \models n,$ the linear map $\Phi_{\mu}$ is an isomorphism of algebras with the inverse map $\Psi_{\mu}.$ Accordingly, $\Phi_{r, n}$ and $\Psi_{r, n}$ establish an isomorphism of algebras between $\widehat{H}_{r,n}^{\mathrm{aff}}$ and $\bigoplus_{\mu\in \mathcal{C}_{r,n}}\emph{Mat}_{m_{\mu}}(\widehat{H}^{\mu}).$
\end{theorem}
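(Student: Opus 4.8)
The plan is to realize $\Phi_\mu$ and $\Psi_\mu$ as the two directions of a matrix-unit (Peirce) decomposition of the block $E_\mu\widehat{H}_{r,n}^{\mathrm{aff}}$, reducing the entire algebra structure to the corner $E_{\chi_1^\mu}\widehat{H}_{r,n}^{\mathrm{aff}}E_{\chi_1^\mu}$, which is already identified with $\widehat{H}^\mu$ by the preceding lemma defining $\phi_\mu$. Two standing facts are used throughout. First, conjugation by the $T$'s permutes the characters: from the relations of Definition \ref{Definition 2-4} one gets
\[
  T_{\widehat{w}}E_\chi=E_{\widehat{w}(\chi)}T_{\widehat{w}}\qquad(\widehat{w}\in\widehat{W},\ \chi\in\mathrm{Irr}(\mathcal{T})).
\]
Second, a character evaluates each $e_i$ to a scalar: $E_\chi e_i=E_\chi$ if $\chi(t_i)=\chi(t_{i+1})$ and $E_\chi e_i=0$ otherwise. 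The first identity forces the index $j$ in \eqref{Phi-mu-cui} determined by $\widehat{w}(\chi_j^\mu)=\chi_k^\mu$, since $E_{\chi_k^\mu}T_{\widehat{w}}=T_{\widehat{w}}E_{\widehat{w}^{-1}(\chi_k^\mu)}$; the second is what makes the corner an affine Hecke algebra, because within the Young subgroup $\mathfrak{S}^\mu$ consecutive indices carry equal values of $\chi_1^\mu$, so the quadratic relation \eqref{rel-def-Heckealg8} collapses to the Hecke one there.

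Granting these, bijectivity and mutual inverseness are pure bookkeeping. By Lemma \ref{another-basis-cui} and the displayed basis of $E_\mu\widehat{H}_{r,n}^{\mathrm{aff}}$, formula \eqref{Phi-mu-cui} carries the $\mathbb{K}$-basis $\{E_{\chi_k^\mu}T_{\widehat{w}}\}$ bijectively onto the basis $\{S_{\widehat{v}}M_{k,j}\}$ of $\mathrm{Mat}_{m_\mu}(\widehat{H}^\mu)$, so $\Phi_\mu$ is a linear isomorphism; using the commutation identity one checks $E_{\chi_i^\mu}T_{\pi_{i,\mu}\widehat{v}\pi_{j,\mu}^{-1}}E_{\chi_j^\mu}=E_{\chi_i^\mu}T_{\pi_{i,\mu}\widehat{v}\pi_{j,\mu}^{-1}}$ for $\widehat{v}\in\widehat{\mathfrak{S}^\mu}$, which identifies \eqref{Psi-mu-cui} with the inverse of the basis bijection. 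Evaluating $\Psi_\mu\circ\Phi_\mu$ and $\Phi_\mu\circ\Psi_\mu$ on basis elements then returns the identity, once the relation $\widehat{w}(\chi_j^\mu)=\chi_k^\mu$ is used to collapse the intervening idempotent. Hence the only substantive point is multiplicativity of $\Psi_\mu$ (equivalently $\Phi_\mu$).

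For multiplicativity I would pass to matrix units. The idempotents $E_{\chi_1^\mu},\dots,E_{\chi_{m_\mu}^\mu}$ are orthogonal with sum $E_\mu$, and $T_{\pi_{k,\mu}}E_{\chi_1^\mu}T_{\pi_{k,\mu}}^{-1}=E_{\chi_k^\mu}$ by the commutation identity, so $\{T_{\pi_{i,\mu}}E_{\chi_1^\mu}T_{\pi_{j,\mu}}^{-1}\}$ is a complete system of matrix units whose corner is $E_{\chi_1^\mu}\widehat{H}_{r,n}^{\mathrm{aff}}E_{\chi_1^\mu}\cong\widehat{H}^\mu$ via $\phi_\mu$. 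Expanding the product $\Psi_\mu(S_{\widehat{a}}M_{i,j})\,\Psi_\mu(S_{\widehat{b}}M_{k,l})$, the commutation identity forces it to vanish unless $j=k$, matching $M_{i,j}M_{k,l}=\delta_{jk}M_{i,l}$; for $j=k$ it equals $E_{\chi_i^\mu}T_{\pi_{i,\mu}\widehat{a}\pi_{j,\mu}^{-1}}T_{\pi_{j,\mu}\widehat{b}\pi_{l,\mu}^{-1}}$, while the prescribed value is $\Psi_\mu\big((S_{\widehat{a}}S_{\widehat{b}})M_{i,l}\big)$ with $S_{\widehat{a}}S_{\widehat{b}}$ computed in $\widehat{H}^\mu$. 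Since $\phi_\mu$ is an algebra homomorphism, the corner identity $E_{\chi_1^\mu}T_{\widehat{a}}T_{\widehat{b}}=\sum_{\widehat{c}}m_{\widehat{c}}E_{\chi_1^\mu}T_{\widehat{c}}$, where $S_{\widehat{a}}S_{\widehat{b}}=\sum_{\widehat{c}}m_{\widehat{c}}S_{\widehat{c}}$, is available, and the two sides will agree once it is conjugated by $T_{\pi_{i,\mu}}$ on the left and $T_{\pi_{l,\mu}}^{-1}$ on the right.

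The main obstacle is exactly this last conjugation: one must justify $T_{\pi_{i,\mu}\widehat{v}\pi_{j,\mu}^{-1}}=T_{\pi_{i,\mu}}T_{\widehat{v}}T_{\pi_{j,\mu}}^{-1}$ for $\widehat{v}\in\widehat{\mathfrak{S}^\mu}$, that is, the length additivity $\ell(\pi_{i,\mu}\widehat{v})=\ell(\pi_{i,\mu})+\ell(\widehat{v})$ guaranteed by the minimality of the distinguished coset representatives $\pi_{k,\mu}$, so that no extra terms from the quadratic relation intervene. This is the length and reduced-word bookkeeping performed in [JaPA, Theorem 3.1]; the relevant length-compatibility is precisely the content of Lemma \ref{lemma-juyumaya-gene}. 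Granting it, $\Phi_\mu$ and $\Psi_\mu$ are mutually inverse algebra isomorphisms. Finally, since the $E_\mu$ for $\mu\in\mathcal{C}_{r,n}$ are pairwise orthogonal central idempotents realizing the decomposition \eqref{direct-sum-cui} of $\widehat{H}_{r,n}^{\mathrm{aff}}$ into two-sided ideals, taking the direct sum over $\mu\models n$ of the block isomorphisms $\Phi_\mu$ yields the asserted isomorphism between $\widehat{H}_{r,n}^{\mathrm{aff}}$ and $\bigoplus_{\mu\in\mathcal{C}_{r,n}}\mathrm{Mat}_{m_\mu}(\widehat{H}^\mu)$.
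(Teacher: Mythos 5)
Your overall strategy is the same one the paper relies on: the paper gives no details at all for this theorem, simply asserting that it ``can be proved in exactly the same way as in [JaPA, Theorem 3.1],'' and your Peirce/matrix-unit reduction to the corner $E_{\chi_1^\mu}\widehat{H}_{r,n}^{\mathrm{aff}}E_{\chi_1^\mu}\cong\widehat{H}^\mu$, together with the commutation rule $T_{\widehat{w}}E_\chi=E_{\widehat{w}(\chi)}T_{\widehat{w}}$ and the basis bookkeeping from Lemma \ref{another-basis-cui}, is a faithful expansion of that argument. The linear-bijectivity and mutual-inverseness parts of your write-up are correct.

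However, the justification you give at the point you yourself flag as the crux is wrong. You claim that $T_{\pi_{i,\mu}\widehat{v}\pi_{j,\mu}^{-1}}=T_{\pi_{i,\mu}}T_{\widehat{v}}T_{\pi_{j,\mu}}^{-1}$ follows from length additivity $\ell(\pi_{i,\mu}\widehat{v})=\ell(\pi_{i,\mu})+\ell(\widehat{v})$ ``guaranteed by the minimality of the distinguished coset representatives.'' Minimality of the $\pi_{k,\mu}$ in $\mathfrak{S}_n/\mathfrak{S}^\mu$ controls lengths only against the finite part; it says nothing about $\widehat{v}\in\widehat{\mathfrak{S}^\mu}=\mathbb{Z}^n\rtimes\mathfrak{S}^\mu$ with a nontrivial translation part, and length additivity genuinely fails there. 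For example, with $n=2$, $\mu=(1,1)$, $\pi_{2,\mu}=s_1$ and $\widehat{v}=X_1=\rho s_0\in\widehat{\mathfrak{S}^\mu}$, one has $\ell(s_1)+\ell(X_1)=2$ while $s_1X_1=\rho$ has length $0$; correspondingly $T_{s_1}T_{X_1}=T_\rho+(q-q^{-1})e_1T_\rho T_{s_0}\neq T_{s_1X_1}$. What actually saves the computation -- and what [JaPA] proves -- is that every such correction term carries a factor $e_i$ which, after being pushed to the left using Lemma \ref{lemma-juyumaya-gene}, is annihilated by the idempotent: $E_{\chi_k^\mu}e_i=0$ whenever $\chi_k^\mu(t_i)\neq\chi_k^\mu(t_{i+1})$, which is exactly the case for the reflections at which length fails to add (in the example, $E_{\chi_2^\mu}e_1=0$). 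So the identity you need is the idempotent-dressed one, $E_{\chi_i^\mu}T_{\pi_{i,\mu}\widehat{v}\pi_{j,\mu}^{-1}}=E_{\chi_i^\mu}T_{\pi_{i,\mu}}T_{\widehat{v}}T_{\pi_{j,\mu}}^{-1}$, and its proof is by induction on length with the $E_\chi e_i=0$ vanishing, not by reduced-word concatenation. With that replacement your argument goes through and coincides with the intended proof.
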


Combining Theorems \ref{iwahori-matsumoto-pre} and \ref{isomorphsim-theorem1-cui}, we immediately get the following result, which was previously proved in [C1, Theorem 5.1] and also [PA, Theorem 3.1].
\begin{theorem}\label{isomorphsim-theorem11-cuiiuc}
There is a canonical isomorphism between the affine Yokonuma-Hecke algebra $\widehat{Y}_{r,n}$ and $\bigoplus_{\mu\in \mathcal{C}_{r,n}}\emph{Mat}_{m_{\mu}}(\widehat{H}^{\mu}).$
\end{theorem}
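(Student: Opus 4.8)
The plan is to obtain the desired isomorphism by composing the two isomorphisms already established in this section, since the final statement is a formal consequence of them. By Theorem \ref{iwahori-matsumoto-pre} (specialized over $\mathbb{K}$), the map $\Psi$ is an isomorphism of algebras $\widehat{Y}_{r,n}\xrightarrow{\sim}\widehat{H}_{r,n}^{\mathrm{aff}}$, with inverse $\Phi$. By Theorem \ref{isomorphsim-theorem1-cui}, the map $\Phi_{r,n}$ is an isomorphism of algebras $\widehat{H}_{r,n}^{\mathrm{aff}}\xrightarrow{\sim}\bigoplus_{\mu\in\mathcal{C}_{r,n}}\mathrm{Mat}_{m_{\mu}}(\widehat{H}^{\mu})$, with inverse $\Psi_{r,n}$. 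Thus I would simply set the required map to be the composite $\Phi_{r,n}\circ\Psi$, whose inverse is $\Phi\circ\Psi_{r,n}$.

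The only point worth recording explicitly is that both factors are genuine algebra isomorphisms, not merely bijective linear maps, so that their composite is automatically an algebra isomorphism and no defining relations need to be re-checked. For $\Psi$ this is exactly the content of the proof of Theorem \ref{iwahori-matsumoto-pre}, where it is verified that $\Psi$ respects the relations \eqref{rel-def-Y1}--\eqref{rel-def-Y10} and that $\Phi\circ\Psi=\mathrm{Id}$, $\Psi\circ\Phi=\mathrm{Id}$; for $\Phi_{r,n}$ it is Theorem \ref{isomorphsim-theorem1-cui}. I would also note that, although Theorem \ref{iwahori-matsumoto-pre} is stated over $\mathcal{R}$, the map $\Psi$ base-changes to an isomorphism over the specialization field $\mathbb{K}$, which is the setting in which $\Phi_{r,n}$ is defined, so the composition is meaningful.

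There is essentially no real obstacle here: the statement is a corollary of the two preceding theorems, and the argument is a one-line composition. The mildest subtlety is the adjective \emph{canonical}: I would point out that the composite depends on no further data than the choices already built into $\Phi_{r,n}$ (namely the distinguished coset representatives $\pi_{k,\mu}$ used in \eqref{Phi-mu-cui}), and is otherwise determined by the explicit generator assignments of $\Psi$, which justifies the terminology. Since this is immediate, I expect to state the composite, invoke Theorems \ref{iwahori-matsumoto-pre} and \ref{isomorphsim-theorem1-cui}, and conclude.
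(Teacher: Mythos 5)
Your proposal is correct and coincides with the paper's own argument: the theorem is stated there as an immediate consequence of combining Theorem \ref{iwahori-matsumoto-pre} with Theorem \ref{isomorphsim-theorem1-cui}, i.e.\ precisely the composition $\Phi_{r,n}\circ\Psi$ you describe. Your additional remarks on base change to $\mathbb{K}$ and on canonicity are sensible but not needed beyond what the paper already records.
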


\section{A third presentation}

Let $\zeta=e^{2\pi i/r}$ and let $A$ be the square matrix of degree $r$ whose $ij$-entry is equal to $a_{ij}=\zeta^{j(i-1)}$ for $1\leq i, j\leq r;$ i.e., $A$ is the usual Vandermonde matrix. Let $\Delta=\mathrm{det} A$ is the Vandermonde determinant, that is, $\Delta=\prod_{1\leq j< i\leq r}(\zeta^{i}-\zeta^{j}).$ We can write the inverse of $A$ as $A^{-1}=\Delta^{-1}B,$ where $B=(b_{ij}(\zeta))$ is the adjoint matrix of $A$.

For each $1\leq i\leq r,$ we define a polynomial $F_{i}(X)\in \mathbb{Z}[\zeta][X]$ by
\[F_{i}(X) :=\sum_{1\leq j\leq r}b_{ij}(\zeta)X^{j-1}.\]
Let $\mathfrak{R}=\mathbb{Z}[q,q^{-1},\zeta, \Delta^{-1}],$ where $q$ is an indeterminate. We first give the definition of an $\mathfrak{R}$-associative algebra $\widehat{\mathcal{C}}_{r,n}^{\mathrm{aff}}.$

\begin{definition}\label{Definition 3-1}
We define an $\mathfrak{R}$-associative algebra $\widehat{\mathcal{C}}_{r,n}^{\mathrm{aff}},$ which is generated by the elements $w_{1},\ldots,w_{n},$ $h_{s_{0}},\ldots,h_{s_{n-1}},$ $h_{\rho}^{\pm 1}$ with the following relations:
\begin{align}
w_i^r&=1 \quad \qquad\qquad\qquad\qquad\mbox{for all $1\leq i\leq n$;}\label{athid-rel-def-Heckealg1}\\[0.1em]
w_iw_j&=w_jw_i \qquad\qquad\qquad\quad\hspace{1.4mm}  \mbox{for all $1\leq i, j\leq n$;}\label{athid-rel-def-Heckealg2}\\[0.1em]
h_{\rho}w_j&=w_{j-1}h_{\rho} \quad \quad\qquad\qquad\hspace{1.5mm}\mbox{for all $1\leq j\leq n$;}\label{athid-rel-def-Heckealg3}\\[0.1em]
h_{\rho}h_{s_{\overline{i}}}&=h_{s_{\overline{i-1}}}h_{\rho}  \quad \quad\qquad\qquad\hspace{1.5mm}\mbox{for all $0\leq i\leq n-1 $;}\label{athid-rel-def-Heckealg4}\\[0.1em]
h_{s_{i}}h_{s_{j}}&=h_{s_{j}}h_{s_{i}}\quad \quad\qquad\qquad\hspace{3.7mm}  \text{if $i-j\not\equiv \pm 1$ (mod $n$)};\label{athid-rel-def-Heckealg5}\\[0.1em]
h_{s_{\overline{i}}}h_{s_{\overline{i+1}}}h_{s_{\overline{i}}}&=h_{s_{\overline{i+1}}}h_{s_{\overline{i}}}h_{s_{\overline{i+1}}}\quad \quad\qquad \hspace{0.3mm}  \text{if $0\leq i\leq n-1$ and $n\geq 3$};\label{athid-rel-def-Heckealg6}\\[0.1em]
h_{s_{i}}^{2}&=1+(q-q^{-1})h_{s_{i}} \quad \hspace{6mm}\mbox{for all $0\leq i\leq n-1$;}\label{athid-rel-def-Heckealg7}\\[0.1em]
h_{s_{i}}w_i&=w_{i+1}h_{s_{i}}-\Delta^{-2}\sum_{c_{1}< c_{2}}(\zeta^{c_2}-\zeta^{c_{1}})(q-q^{-1})F_{c_1}(w_{i})F_{c_2}(w_{i+1}) \hspace{2.5mm}\mbox{for $1\leq i\leq n-1$;}\label{athid-rel-def-Heckealg8}\\[0.1em]
h_{s_{i}}w_{i+1}&=w_{i}h_{s_{i}}+\Delta^{-2}\sum_{c_{1}< c_{2}}(\zeta^{c_2}-\zeta^{c_{1}})(q-q^{-1})F_{c_1}(w_{i})F_{c_2}(w_{i+1}) \hspace{2.5mm}\mbox{for $1\leq i\leq n-1$;}\label{athid-rel-def-Heckealg9}\\[0.1em]
h_{s_{i}}w_{l}&=w_{l}h_{s_{i}} \quad \qquad\qquad\qquad\hspace{1.5mm}\mbox{for all $l\neq i, i+1$ and $1\leq i\leq n-1$;}\label{athid-rel-def-Heckealg10}\\[0.1em]
h_{\rho}h_{\rho}^{-1}&=h_{\rho}^{-1}h_{\rho}=1,\label{athid-rel-def-Heckealg11-com}
\end{align}
where $w_{0} :=w_{n}$ and in the expressions above, the sum is taken over all $1\leq c_{1}, c_{2}\leq r$ such that $c_{1}< c_{2}.$
\end{definition}

Let $R=\mathbb{Z}[\frac{1}{r}][q,q^{-1},\zeta, \Delta^{-1}].$ We extend the algebras $\widehat{Y}_{r,n},$ $\widehat{H}_{r,n}^{\mathrm{aff}}$ and $\widehat{\mathcal{C}}_{r,n}^{\mathrm{aff}}$ from $\mathcal{R}$ and $\mathfrak{R}$ to $R$, respectively. We shall denote the extension algebras by the same notations such that $\widehat{Y}_{r,n},$ $\widehat{H}_{r,n}^{\mathrm{aff}}$ and $\widehat{\mathcal{C}}_{r,n}^{\mathrm{aff}}$ are all defined on $R$ in the rest of this section.

We now state the main result of this section.
\begin{theorem}\label{iwahori-matsumoto-pre-cuimaintheorem}
We have an $R$-algebra isomorphism $\phi :\widehat{H}_{r,n}^{\mathrm{aff}}\rightarrow \widehat{\mathcal{C}}_{r,n}^{\mathrm{aff}}$ given as follows$:$

\noindent for $1\leq j\leq n$,
\[\phi(t_{j})=w_{j},\]
\noindent for $0\leq i\leq n-1$,
\[\phi(T_{s_{i}})=h_{s_{i}}-\Delta^{-2}(q-q^{-1})\sum_{c_{1}< c_{2}}F_{c_1}(w_{i})F_{c_2}(w_{i+1}),\]
\noindent and
\[\phi(T_{\rho}^{\pm1})=h_{\rho}^{\pm1}.\]
Moreover, its inverse $\psi :\widehat{\mathcal{C}}_{r,n}^{\mathrm{aff}}\rightarrow \widehat{H}_{r,n}^{\mathrm{aff}}$ is defined as follows$:$

\noindent for $1\leq j\leq n$,
\[\psi(w_{j})=t_{j},\]
\noindent for $0\leq i\leq n-1$,
\[\psi(h_{s_{i}})=T_{s_{i}}+\Delta^{-2}(q-q^{-1})\sum_{c_{1}< c_{2}}F_{c_1}(t_{i})F_{c_2}(t_{i+1}),\]
\noindent and
\[\psi(h_{\rho}^{\pm1})=T_{\rho}^{\pm1}.\]
\end{theorem}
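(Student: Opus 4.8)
The plan is to extend $\phi$ and $\psi$ from the generators to algebra homomorphisms of the corresponding free algebras, and then to check that $\phi$ preserves the defining relations \eqref{rel-def-Heckealg1}--\eqref{rel-def-Heckealg8}, \eqref{rel-def-Heckealg4-com} of $\widehat{H}_{r,n}^{\mathrm{aff}}$ and that $\psi$ preserves the defining relations \eqref{athid-rel-def-Heckealg1}--\eqref{athid-rel-def-Heckealg10}, \eqref{athid-rel-def-Heckealg11-com} of $\widehat{\mathcal{C}}_{r,n}^{\mathrm{aff}}$. Once both are genuine homomorphisms, the identities $\psi\circ\phi=\mathrm{Id}$ and $\phi\circ\psi=\mathrm{Id}$ hold already on generators, since the correction terms cancel: for example $\psi(\phi(T_{s_i}))=\psi(h_{s_i})-\Delta^{-2}(q-q^{-1})\sum_{c_1<c_2}F_{c_1}(t_i)F_{c_2}(t_{i+1})=T_{s_i}$, and the maps on $t_j,w_j,T_\rho^{\pm1},h_\rho^{\pm1}$ are literally mutually inverse. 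Thus the whole content is the verification of the two sets of relations.

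\textbf{The idempotent calculus.} The crucial preliminary tool is the decomposition attached to the polynomials $F_c$. From $A^{-1}A=I$ one reads off $F_c(\zeta^m)=\Delta\,\delta_{cm}$, so that, setting $\varepsilon^{(i)}_c:=\Delta^{-1}F_c(w_i)$, the family $\{\varepsilon^{(i)}_c\}_{1\le c\le r}$ is a complete set of orthogonal idempotents with $w_i\varepsilon^{(i)}_c=\zeta^c\varepsilon^{(i)}_c$; the same holds with $w_i$ replaced by $t_i$ on the $\widehat{H}$-side. A short computation gives $\phi(e_i)=\tfrac1r\sum_s w_i^sw_{i+1}^{-s}=\sum_c\varepsilon^{(i)}_c\varepsilon^{(i+1)}_c$. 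Writing $P_i:=\sum_{c_1<c_2}\varepsilon^{(i)}_{c_1}\varepsilon^{(i+1)}_{c_2}$ and $Q_i:=\sum_{c_1>c_2}\varepsilon^{(i)}_{c_1}\varepsilon^{(i+1)}_{c_2}$, one obtains an orthogonal splitting $1=\phi(e_i)+P_i+Q_i$, and $\phi(T_{s_i})=h_{s_i}-(q-q^{-1})P_i$. This reformulation turns every verification into eigenvalue bookkeeping in the $\varepsilon^{(i)}_c$.

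\textbf{The formal and the short relations.} Then $\phi$ preserves \eqref{rel-def-Heckealg1}, \eqref{rel-def-Heckealg2}, \eqref{rel-def-Heckealg4}, \eqref{rel-def-Heckealg5}, \eqref{rel-def-Heckealg6} and \eqref{rel-def-Heckealg4-com} essentially formally, since $h_{s_i}$ and the correction $P_i$ involve only $w_i,w_{i+1}$ and the needed interactions are governed by \eqref{athid-rel-def-Heckealg2}--\eqref{athid-rel-def-Heckealg5}, \eqref{athid-rel-def-Heckealg10} and \eqref{athid-rel-def-Heckealg11-com}. The first substantive point is the case $j=i,i+1$ of the cross relation \eqref{rel-def-Heckealg3}: expanding $\phi(T_{s_i})w_i$ with \eqref{athid-rel-def-Heckealg8} and using $w_i\varepsilon^{(i)}_{c_1}=\zeta^{c_1}\varepsilon^{(i)}_{c_1}$, the twisted term of \eqref{athid-rel-def-Heckealg8} is exactly $(q-q^{-1})\sum_{c_1<c_2}(\zeta^{c_2}-\zeta^{c_1})\varepsilon^{(i)}_{c_1}\varepsilon^{(i+1)}_{c_2}$ and cancels, giving $\phi(T_{s_i})w_i=w_{i+1}\phi(T_{s_i})$. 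This forces the swapping rule $\phi(T_{s_i})\varepsilon^{(i)}_c=\varepsilon^{(i+1)}_c\phi(T_{s_i})$, hence $\phi(T_{s_i})$ interchanges $P_i\leftrightarrow Q_i$ and commutes with $\phi(e_i)$, and moreover $h_{s_i}$ commutes with $\phi(e_i)$. The quadratic relation \eqref{rel-def-Heckealg8} is then clean: on the block $\phi(e_i)$ one has $\phi(T_{s_i})\phi(e_i)=h_{s_i}\phi(e_i)$, whose square is $(1+(q-q^{-1})h_{s_i})\phi(e_i)$ by \eqref{athid-rel-def-Heckealg7}, while on the block $P_i+Q_i$ the swapping rule gives $\phi(T_{s_i})^2(P_i+Q_i)=P_i+Q_i$; adding the two blocks yields precisely $1+(q-q^{-1})\phi(e_i)\phi(T_{s_i})$. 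The verification that $\psi$ preserves \eqref{athid-rel-def-Heckealg7}--\eqref{athid-rel-def-Heckealg10} is entirely symmetric, using instead $T_{s_i}^2=1+(q-q^{-1})e_iT_{s_i}$ and the eigenvalues of the $t_i$.

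\textbf{The main obstacle: the braid relation.} The genuinely hard step is the braid relation \eqref{rel-def-Heckealg7} (and its mirror \eqref{athid-rel-def-Heckealg6} for $\psi$), because the factors $\phi(T_{s_i})=h_{s_i}-(q-q^{-1})P_i$ and $\phi(T_{s_{i+1}})=h_{s_{i+1}}-(q-q^{-1})P_{i+1}$ involve the \emph{overlapping} variable $w_{i+1}$, so the corrections $P_i$ and $P_{i+1}$ do not commute past one another and the bookkeeping becomes genuinely three-index in $w_i,w_{i+1},w_{i+2}$. I would expand $\phi(T_{s_i})\phi(T_{s_{i+1}})\phi(T_{s_i})$ and $\phi(T_{s_{i+1}})\phi(T_{s_i})\phi(T_{s_{i+1}})$ by moving all the $h$'s to one side using \eqref{athid-rel-def-Heckealg6} together with the swapping rules $h_{s_i}\varepsilon^{(i)}_c=\varepsilon^{(i+1)}_c h_{s_i}+(q-q^{-1})(\cdots)$ derived above, reducing both products to $h_{s_i}h_{s_{i+1}}h_{s_i}$ times a correction supported on the idempotents $\varepsilon^{(i)}_{c_1}\varepsilon^{(i+1)}_{c_2}\varepsilon^{(i+2)}_{c_3}$, and then match the two correction terms entry by entry in the $\varepsilon$-basis. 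Since \eqref{athid-rel-def-Heckealg6}--\eqref{athid-rel-def-Heckealg10} are stated uniformly for $0\le i\le n-1$ with $w_0:=w_n$, the cases involving $s_0$ need no separate treatment. Once \eqref{rel-def-Heckealg7} and \eqref{athid-rel-def-Heckealg6} are settled, both $\phi$ and $\psi$ are algebra homomorphisms, the generator-level computation shows they are mutually inverse, and the isomorphism follows.
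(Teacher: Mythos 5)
Your proposal takes a genuinely different route from the paper. The paper never verifies a braid or quadratic relation among the $T_{s_i}$ directly: it quotes the known finite-type isomorphism between the Yokonuma--Hecke algebra and the modified Ariki--Koike algebra ([ER, Theorem 7], restated as Proposition \ref{er-isomorphism7}) and applies it to the three subalgebras $\mathcal{H}_{r,n}$, $\mathcal{H}_{r,n}^{1}$, $\mathcal{H}_{r,n}^{2}$ obtained by deleting the node $s_{0}$, $s_{n-1}$ or $s_{1}$ from the affine Dynkin cycle (Propositions \ref{ERer-isomorphism8} and \ref{ERer-iucaddisomorphism8} are the cyclic relabelings of Proposition \ref{er-isomorphism7}). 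Any relation of Definition \ref{Definition 2-4} not involving $T_{\rho}$ mentions at most two nodes of the cycle, hence lies in at least one of these three finite-type subalgebras, where the known isomorphism applies; the only relations checked by hand are \eqref{rel-def-Heckealg5} and \eqref{athid-rel-def-Heckealg4}, which are immediate because $\phi(T_{\rho})=h_{\rho}$ and conjugation by $h_{\rho}$ carries $F_{c}(w_{j})$ to $F_{c}(w_{j-1})$. Your idempotent calculus with $\varepsilon_{c}^{(i)}=\Delta^{-1}F_{c}(w_{i})$ is correctly set up, and your verifications of the cross relations and of the quadratic relation \eqref{rel-def-Heckealg8} are sound; what your route buys is a self-contained argument independent of [ER], at the price of redoing the computation that is the actual content of that theorem.

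That computation is precisely the step you do not carry out. For the braid relation \eqref{rel-def-Heckealg7} you only describe a strategy (``match the two correction terms entry by entry in the $\varepsilon$-basis''); this three-index bookkeeping in $\varepsilon_{c_{1}}^{(i)}\varepsilon_{c_{2}}^{(i+1)}\varepsilon_{c_{3}}^{(i+2)}$ is the heart of the Sawada--Shoji/Hu--Stoll/Espinoza--Ryom-Hansen isomorphism theorems and is genuinely delicate, so as written this is a gap: either complete the expansion, or do what the paper does and import the finite-type case from [ER, Theorem 7], transporting it around the cycle. A second, smaller inaccuracy: your claim that \eqref{athid-rel-def-Heckealg6}--\eqref{athid-rel-def-Heckealg10} ``are stated uniformly for $0\leq i\leq n-1$'' is false. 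The relations \eqref{athid-rel-def-Heckealg8}--\eqref{athid-rel-def-Heckealg10} are imposed only for $1\leq i\leq n-1$ (and likewise \eqref{rel-def-Heckealg3} only for $1\leq i\leq n-1$), so the commutation of $h_{s_{0}}$ with $w_{n}$ and $w_{1}$ --- which your swapping rule needs at $i=0$, hence also the quadratic relation \eqref{rel-def-Heckealg8} at $i=0$ and the two affine braid relations --- is not a defining relation of $\widehat{\mathcal{C}}_{r,n}^{\mathrm{aff}}$. It must first be derived, e.g.\ by conjugating the $i=1$ relations by $h_{\rho}$ using \eqref{athid-rel-def-Heckealg3} and \eqref{athid-rel-def-Heckealg4}; the analogous remark applies to $T_{s_{0}}$ on the $\widehat{H}_{r,n}^{\mathrm{aff}}$ side when checking $\psi$. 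This is fixable, but it is exactly the ``separate treatment'' you assert is unnecessary.
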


We denote by $\mathcal{H}_{r,n}$ the subalgebra of $\widehat{H}_{r,n}^{\mathrm{aff}}$ generated by the elements $t_{1},\ldots, t_{n},$ $T_{s_{1}},\ldots,$ $T_{s_{n-1}},$ which is canonically isomorphic to the Yokonuma-Hecke algebra defined in [ChPA, Section 2.1], and we shall not distinguish between them.

We denote by $\mathcal{C}_{r,n}$ the subalgebra of $\widehat{\mathcal{C}}_{r,n}^{\mathrm{aff}}$ generated by the elements $w_{1},\ldots, w_{n},$ $h_{s_{1}},\ldots,$ $h_{s_{n-1}},$ which is canonically isomorphic to the modified Ariki-Koike algebra defined in [S, Section 3.6] with $u_{i}=\zeta^{i}$ for $1\leq i\leq r$, and we shall not distinguish between them.

The following proposition has been proved in [ER, Theorem 7].
\begin{proposition}\label{er-isomorphism7}
There is a canonical isomorphism between $\mathcal{H}_{r,n}$ and $\mathcal{C}_{r,n}$, which is explicitly described by $\varphi :\mathcal{H}_{r,n}\rightarrow \mathcal{C}_{r,n}$ and $\chi :\mathcal{C}_{r,n}\rightarrow \mathcal{H}_{r,n}$, where $\varphi$ is defined as follows$:$

\noindent for $1\leq j\leq n$,
\[\varphi(t_{j})=w_{j},\]
\noindent for $1\leq i\leq n-1$,
\[\varphi(T_{s_{i}})=h_{s_{i}}-\Delta^{-2}(q-q^{-1})\sum_{c_{1}< c_{2}}F_{c_1}(w_{i})F_{c_2}(w_{i+1}),\]
\noindent and its inverse $\chi$ is defined as follows$:$

\noindent for $1\leq j\leq n$,
\[\chi(w_{j})=t_{j},\]
\noindent for $1\leq i\leq n-1$,
\[\chi(h_{s_{i}})=T_{s_{i}}+\Delta^{-2}(q-q^{-1})\sum_{c_{1}< c_{2}}F_{c_1}(t_{i})F_{c_2}(t_{i+1}).\]
\end{proposition}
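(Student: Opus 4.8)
The plan is to treat the statement as a presentation-theoretic verification. Both algebras are finitely presented: $\mathcal{H}_{r,n}$ is the Yokonuma--Hecke algebra given by the relations \eqref{rel-def-Heckealg1}--\eqref{rel-def-Heckealg3}, the braid relations among the $T_{s_i}$, and the quadratic relation \eqref{rel-def-Heckealg8}, all restricted to $1\le i\le n-1$; and $\mathcal{C}_{r,n}$ is the modified Ariki--Koike algebra given by \eqref{athid-rel-def-Heckealg1}, \eqref{athid-rel-def-Heckealg2}, \eqref{athid-rel-def-Heckealg5}--\eqref{athid-rel-def-Heckealg10}, again for $1\le i\le n-1$. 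It therefore suffices to: (i) check that $\varphi$ sends the defining relations of $\mathcal{H}_{r,n}$ to valid identities in $\mathcal{C}_{r,n}$, so that $\varphi$ extends to an algebra homomorphism; (ii) check symmetrically that $\chi$ extends to an algebra homomorphism; and (iii) verify that the two are mutually inverse. Step (iii) is immediate: since $\chi(F_c(w_i))=F_c(t_i)$, we get
\[
\chi(\varphi(T_{s_i}))=\Big(T_{s_i}+\Delta^{-2}(q-q^{-1})\!\sum_{c_1<c_2}\!F_{c_1}(t_i)F_{c_2}(t_{i+1})\Big)-\Delta^{-2}(q-q^{-1})\!\sum_{c_1<c_2}\!F_{c_1}(t_i)F_{c_2}(t_{i+1})=T_{s_i},
\]
and likewise $\chi\circ\varphi$ and $\varphi\circ\chi$ fix all the $t_j,w_j$; being algebra homomorphisms agreeing with the identity on generators, they are mutually inverse. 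So everything rests on (i) and (ii).

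The computational engine is a single lemma about the interpolation polynomials $F_c$. Since $A^{-1}=\Delta^{-1}B$ is the inverse Vandermonde matrix in the roots $\zeta^1,\dots,\zeta^r$, the elements $P^{(i)}_c:=\Delta^{-1}F_c(w_i)$ are precisely the primitive orthogonal idempotents of $R[w_i]/(w_i^r-1)$ attached to the eigenvalue $\zeta^c$; concretely $w_iF_c(w_i)=\zeta^c F_c(w_i)$, $\ \Delta^{-2}F_c(w_i)F_{c'}(w_i)=\delta_{c,c'}\Delta^{-1}F_c(w_i)$, and $\sum_c\Delta^{-1}F_c(w_i)=1$. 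Under this dictionary the correction term in $\varphi(T_{s_i})$ is $(q-q^{-1})\sum_{c_1<c_2}P^{(i)}_{c_1}P^{(i+1)}_{c_2}$, while $\varphi(e_i)=\frac1r\sum_s w_i^sw_{i+1}^{-s}=\sum_c P^{(i)}_cP^{(i+1)}_c$ is the projector onto the diagonal $w_i=w_{i+1}$. With these at hand the clean commutation \eqref{rel-def-Heckealg3} is verified directly: writing $\varphi(T_{s_i})=h_{s_i}-C_i$ and using \eqref{athid-rel-def-Heckealg8} one computes $w_{i+1}C_i-C_iw_i=\Delta^{-2}(q-q^{-1})\sum(\zeta^{c_2}-\zeta^{c_1})F_{c_1}(w_i)F_{c_2}(w_{i+1})$, which is exactly the correction appearing in \eqref{athid-rel-def-Heckealg8}, giving $\varphi(T_{s_i})w_i=w_{i+1}\varphi(T_{s_i})$; an identical computation with \eqref{athid-rel-def-Heckealg9} yields $\varphi(T_{s_i})w_{i+1}=w_i\varphi(T_{s_i})$. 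Thus $\varphi(T_{s_i})$ conjugates the commutative subalgebra by the transposition $s_i$, as required. The relations $w_i^r=1$, commutativity, and far-commutation transfer trivially.

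The remaining relations are where the real work lies. For the quadratic relation $\varphi(T_{s_i})^2=1+(q-q^{-1})\varphi(e_i)\varphi(T_{s_i})$ I would expand $(h_{s_i}-C_i)^2$ using $h_{s_i}^2=1+(q-q^{-1})h_{s_i}$ from \eqref{athid-rel-def-Heckealg7}, the commutation of $h_{s_i}$ with the idempotents established above, and the orthogonality relations; the off-diagonal idempotent blocks cancel in pairs so that only the diagonal projector $\varphi(e_i)$ survives in the linear term. The genuinely delicate step, which I expect to be the main obstacle, is the braid relation $\varphi(T_{s_i})\varphi(T_{s_{i+1}})\varphi(T_{s_i})=\varphi(T_{s_{i+1}})\varphi(T_{s_i})\varphi(T_{s_{i+1}})$: although the $h_{s_j}$ already satisfy the braid relation \eqref{athid-rel-def-Heckealg6}, the corrections $C_i,C_{i+1}$ involve three overlapping idempotent families $P^{(i)},P^{(i+1)},P^{(i+2)}$ which do not commute past the $h$'s. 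My plan is to insert the resolution of identity $1=\sum_{a,b,c}P^{(i)}_aP^{(i+1)}_bP^{(i+2)}_c$ and compute both sides block by block: on each triple-eigenvalue block the idempotents act as scalars, the $h$'s behave like the generators of a rank-two Hecke algebra, and the desired equality collapses to the ordinary type-$A$ braid relation combined with the eigenvalue bookkeeping in $(a,b,c)$.

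Finally, step (ii) is the mirror image of (i): since $\chi(h_{s_i})=T_{s_i}+\Delta^{-2}(q-q^{-1})\sum_{c_1<c_2}F_{c_1}(t_i)F_{c_2}(t_{i+1})$ adds back exactly the correction, and the idempotent lemma holds verbatim for the $t_i$, verifying that $\chi$ preserves \eqref{athid-rel-def-Heckealg7}--\eqref{athid-rel-def-Heckealg9} and the braid relation \eqref{athid-rel-def-Heckealg6} is the same computation read backwards. Combining (i), (ii), and the trivial step (iii) yields that $\varphi$ and $\chi$ are mutually inverse algebra isomorphisms, establishing the proposition.
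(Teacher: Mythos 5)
Your overall framework is sound, and it is worth noting that the paper itself does not prove this proposition at all: it simply cites [ER, Theorem 7], so any complete argument here would be supplying what the paper outsources. Your strategy (the interpolation-idempotent lemma, relation-checking in both directions, and the trivial mutual-inverse step) is essentially the strategy of the cited source, and the parts you carried out are correct: writing $P^{(i)}_c=\Delta^{-1}F_c(w_i)$, the identities $w_iF_c(w_i)=\zeta^cF_c(w_i)$, orthogonality, completeness, and $\varphi(e_i)=\sum_cP^{(i)}_cP^{(i+1)}_c$ all hold; your verification of $\varphi(T_{s_i})w_i=w_{i+1}\varphi(T_{s_i})$ via \eqref{athid-rel-def-Heckealg8} is right; and the quadratic relation does close up the way you predict (writing $G_i=h_{s_i}-C_i$, one gets $C_i^2=(q-q^{-1})C_i$, $G_iC_i=\widetilde{C}_iG_i$ with $\widetilde{C}_i$ the opposite off-diagonal sum, and $C_i+\widetilde{C}_i=(q-q^{-1})(1-\varphi(e_i))$, from which $G_i^2=1+(q-q^{-1})\varphi(e_i)G_i$ follows).

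The genuine gap is the braid relation, in both directions (i) and (ii) — and this is the crux of the whole theorem, since everything else is routine. You only give a plan, and the mechanism you describe for it is not correct as stated: the idempotents $P^{(i)}_a$ do \emph{not} act as scalars on invariant blocks, because they do not commute with the $h_{s_j}$'s. The blocks $E_{abc}=P^{(i)}_aP^{(i+1)}_bP^{(i+2)}_c$ are permuted by the $h$'s only up to lower-order correction terms; concretely, the exchange rule one actually has is
\begin{equation*}
h_{s_i}E_{abc}=E_{bac}h_{s_i}+(q-q^{-1})\bigl([a<b]\,E_{abc}-[b<a]\,E_{bac}\bigr),
\end{equation*}
where $[\cdot]$ denotes the indicator of the stated inequality. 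Consequently, right-multiplying $G_iG_{i+1}G_i-G_{i+1}G_iG_{i+1}$ by $E_{abc}$ and pushing idempotents to the left produces, on each side, a sum over permutations $\sigma$ of terms $E_{\sigma(a,b,c)}$ times words in $h_{s_i},h_{s_{i+1}}$ with coefficients depending on the relative orders of $a,b,c$ at the intermediate stages — and these intermediate orderings differ between the two sides, so the required equality rests on nontrivial cancellations that must be exhibited, not on "the ordinary type-$A$ braid relation plus bookkeeping." The same unfinished computation is needed for $\chi$ in step (ii) (the braid relation \eqref{athid-rel-def-Heckealg6} for $\chi(h_{s_i})=T_{s_i}+C_i'$ requires the analogous cancellation in $\mathcal{H}_{r,n}$). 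Until this case analysis is actually carried through (for triples with $a,b,c$ distinct, two equal, and all equal), the proof is incomplete at precisely the step where the theorem has content.
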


We then give the definition of two algebras as follows.
\begin{definition}\label{Definitionisomor-3-4}
We denote by $\mathcal{H}_{r,n}^{1}$ the subalgebra of $\widehat{H}_{r,n}^{\mathrm{aff}}$ generated by the elements $t_{1},\ldots, t_{n},$ $T_{s_{0}},\ldots,$ $T_{s_{n-2}},$ whose explicit presentation is as follows:
\begin{align}
t_i^r&=1 \quad \qquad\qquad\qquad\qquad\mbox{for all $1\leq i\leq n$;}\label{rel-def-Heckealgadd1}\\[0.1em]
t_it_j&=t_jt_i \qquad\qquad\qquad\qquad\hspace{0.4mm}  \mbox{for all $1\leq i, j\leq n$;}\label{rel-def-Heckealgadd2}\\[0.1em]
T_{s_{0}}t_1&=t_{n}T_{s_{0}} \quad \quad\qquad\qquad\hspace{1mm}\label{rel-def-Heckealgadd3}\\[0.1em]
T_{s_{0}}t_n&=t_{1}T_{s_{0}} \quad \quad\qquad\qquad\hspace{1mm}\label{rel-def-Heckealgaddadd3}\\[0.1em]
T_{s_{0}}t_{k}&=t_{k}T_{s_{0}} \quad \qquad\qquad\qquad\hspace{1.5mm}\mbox{for all $2\leq k\leq n-1$;}\label{rel-def-Heckealgaddaddadd3}\\[0.1em]
T_{s_{i}}t_j&=t_{s_i(j)}T_{s_{i}} \quad \quad\qquad\qquad\hspace{1.2mm}\mbox{for all $1\leq i\leq n-2$ and $1\leq j\leq n$;}\label{rel-def-Heckealgassadd3}\\[0.1em]
T_{s_{i}}T_{s_{j}}&=T_{s_{j}}T_{s_{i}}\quad \quad\qquad\qquad\hspace{3.95mm}  \text{if $|i-j |> 1$};\label{rel-def-Heckealgadd6}\\[0.1em]
T_{s_{i}}T_{s_{i+1}}T_{s_{i}}&=T_{s_{i+1}}T_{s_{i}}T_{s_{i+1}}\quad \quad\qquad \hspace{0.55mm}  \text{if $0\leq i\leq n-3$ and $n\geq 3$};\label{rel-def-Heckealgadd7}\\[0.1em]
T_{s_{i}}^{2}&=1+(q-q^{-1})e_{i}T_{s_{i}} \quad \hspace{3.65mm}\mbox{for all $0\leq i\leq n-2$,}\label{rel-def-Heckealgadd8}
\end{align}
where $t_{0} :=t_{n}$ and for each $0\leq i\leq n-2$,
$$e_{i} :=\frac{1}{r}\sum\limits_{s=0}^{r-1}t_{i}^{s}t_{i+1}^{-s}.$$
\end{definition}

\begin{definition}\label{Definitionisomorphism-3-5}
We denote by $\mathcal{C}_{r,n}^{1}$ the subalgebra of $\widehat{\mathcal{C}}_{r,n}^{\mathrm{aff}}$ generated by the elements $w_{1},\ldots, w_{n},$ $h_{s_{0}},\ldots,$ $h_{s_{n-2}},$ whose explicit presentation is as follows:
\begin{align}
w_i^r&=1 \quad \qquad\qquad\qquad\qquad\mbox{for all $1\leq i\leq n$;}\label{athid-rel-def-Heckealgass1}\\[0.1em]
w_iw_j&=w_jw_i \qquad\qquad\qquad\quad\hspace{1.4mm}  \mbox{for all $1\leq i, j\leq n$;}\label{athid-rel-def-Heckealgass2}\\[0.1em]
h_{s_{i}}h_{s_{j}}&=h_{s_{j}}h_{s_{i}}\quad \quad\qquad\qquad\hspace{3.7mm}  \text{if $|i-j |> 1$};\label{athid-rel-def-Heckealgass5}\\[0.1em]
h_{s_{i}}h_{s_{i+1}}h_{s_{i}}&=h_{s_{i+1}}h_{s_{i}}h_{s_{i+1}}\quad \quad\qquad \hspace{0.3mm}  \text{if $0\leq i\leq n-3$ and $n\geq 3$};\label{athid-rel-def-Heckealgass6}\\[0.1em]
h_{s_{i}}^{2}&=1+(q-q^{-1})h_{s_{i}} \quad \hspace{6mm}\mbox{for all $0\leq i\leq n-2$;}\label{athid-rel-def-Heckealgass7}\\[0.1em]
h_{s_{i}}w_i&=w_{i+1}h_{s_{i}}-\Delta^{-2}\sum_{c_{1}< c_{2}}(\zeta^{c_2}-\zeta^{c_{1}})(q-q^{-1})F_{c_1}(w_{i})F_{c_2}(w_{i+1}) \hspace{2.5mm}\mbox{for $0\leq i\leq n-2$;}\label{athid-rel-def-Heckealgass8}\\[0.1em]
h_{s_{i}}w_{i+1}&=w_{i}h_{s_{i}}+\Delta^{-2}\sum_{c_{1}< c_{2}}(\zeta^{c_2}-\zeta^{c_{1}})(q-q^{-1})F_{c_1}(w_{i})F_{c_2}(w_{i+1}) \hspace{2.5mm}\mbox{for $0\leq i\leq n-2$;}\label{athid-rel-def-Heckealgass9}\\[0.1em]
h_{s_{i}}w_{l}&=w_{l}h_{s_{i}} \quad \qquad\qquad\qquad\hspace{1.5mm}\mbox{for all $l\not\equiv i, i+1$ (mod $n$) and $0\leq i\leq n-2$,}\label{athid-rel-def-Heckealgass10}
\end{align}
where $w_{0} :=w_{n}$ and in the expressions above, the sum is taken over all $1\leq c_{1}, c_{2}\leq r$ such that $c_{1}< c_{2}.$
\end{definition}

By using Proposition \ref{er-isomorphism7}, we can easily get the following result.
\begin{proposition}\label{ERer-isomorphism8}
There is a canonical isomorphism between $\mathcal{H}_{r,n}^{1}$ and $\mathcal{C}_{r,n}^{1}$, which is explicitly described by $\phi_{1} :\mathcal{H}_{r,n}^{1}\rightarrow \mathcal{C}_{r,n}^{1}$ and $\psi_{1} :\mathcal{C}_{r,n}^{1}\rightarrow \mathcal{H}_{r,n}^{1}$, where $\phi_{1}$ is defined as follows$:$

\noindent for $1\leq j\leq n$,
\[\phi_{1}(t_{j})=w_{j},\]
\noindent for $0\leq i\leq n-2$,
\[\phi_{1}(T_{s_{i}})=h_{s_{i}}-\Delta^{-2}(q-q^{-1})\sum_{c_{1}< c_{2}}F_{c_1}(w_{i})F_{c_2}(w_{i+1}),\]
\noindent and its inverse $\psi_{1}$ is defined as follows$:$

\noindent for $1\leq j\leq n$,
\[\psi_{1}(w_{j})=t_{j},\]
\noindent for $0\leq i\leq n-2$,
\[\psi_{1}(h_{s_{i}})=T_{s_{i}}+\Delta^{-2}(q-q^{-1})\sum_{c_{1}< c_{2}}F_{c_1}(t_{i})F_{c_2}(t_{i+1}).\]
\end{proposition}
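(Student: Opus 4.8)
The plan is to deduce the result from Proposition \ref{er-isomorphism7} by means of a cyclic relabeling of the generators. The key observation is that the presentation of $\mathcal{H}_{r,n}^{1}$ in Definition \ref{Definitionisomor-3-4} is obtained from the Yokonuma-Hecke presentation of $\mathcal{H}_{r,n}$ simply by shifting every index by one modulo $n$: the generators $T_{s_{0}},T_{s_{1}},\ldots,T_{s_{n-2}}$ take over the roles of $T_{s_{1}},T_{s_{2}},\ldots,T_{s_{n-1}}$, with $T_{s_{0}}$ interchanging $t_{1}$ and $t_{n}$ just as $T_{s_{1}}$ interchanges $t_{1}$ and $t_{2}$. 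Accordingly I would introduce the $R$-linear maps $\alpha:\mathcal{H}_{r,n}^{1}\rightarrow \mathcal{H}_{r,n}$ determined on generators by $\alpha(t_{j})=t_{j+1}$ (with $t_{n+1}:=t_{1}$) and $\alpha(T_{s_{i}})=T_{s_{i+1}}$, and $\beta:\mathcal{C}_{r,n}^{1}\rightarrow \mathcal{C}_{r,n}$ determined by $\beta(w_{j})=w_{j+1}$ (with $w_{n+1}:=w_{1}$) and $\beta(h_{s_{i}})=h_{s_{i+1}}$, together with their evident inverses $t_{j}\mapsto t_{j-1}$, $T_{s_{i}}\mapsto T_{s_{i-1}}$ and $w_{j}\mapsto w_{j-1}$, $h_{s_{i}}\mapsto h_{s_{i-1}}$.

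First I would verify that $\alpha$ and $\beta$ are well-defined isomorphisms by checking that the index shift carries each defining relation of Definition \ref{Definitionisomor-3-4} to one of $\mathcal{H}_{r,n}$, and each relation of Definition \ref{Definitionisomorphism-3-5} to one of $\mathcal{C}_{r,n}$. At the level of the underlying symmetric group this shift is nothing but conjugation by the $n$-cycle $(1\,2\,\cdots\,n)$, which sends the transposition $s_{0}=(n,1)$ to $s_{1}=(1,2)$ and $s_{i}=(i,i+1)$ to $s_{i+1}$; so almost every relation matches automatically. The relations requiring a moment's care are the wrap-around ones \eqref{rel-def-Heckealgadd3}--\eqref{rel-def-Heckealgaddaddadd3} and their $\mathcal{C}$-analogues \eqref{athid-rel-def-Heckealgass8}--\eqref{athid-rel-def-Heckealgass10} at $i=0$. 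Here one uses $t_{0}=t_{n}$ and $w_{0}=w_{n}$; for instance $\alpha(e_{0})=\tfrac{1}{r}\sum_{s}t_{1}^{s}t_{2}^{-s}=e_{1}$, so the quadratic relation $T_{s_{0}}^{2}=1+(q-q^{-1})e_{0}T_{s_{0}}$ is sent to $T_{s_{1}}^{2}=1+(q-q^{-1})e_{1}T_{s_{1}}$, and the polynomial correction $F_{c_{1}}(w_{0})F_{c_{2}}(w_{1})$ is sent to $F_{c_{1}}(w_{1})F_{c_{2}}(w_{2})$. Since both relabelings preserve all relations and admit two-sided inverses of the same shape, they are isomorphisms of $R$-algebras.

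It then remains only to observe that the maps in the statement factor through the Espinoza--Ryom-Hansen isomorphism: comparing formulas gives
\[\phi_{1}=\beta^{-1}\circ\varphi\circ\alpha\qquad\text{and}\qquad \psi_{1}=\alpha^{-1}\circ\chi\circ\beta.\]
Indeed, for $0\leq i\leq n-2$ the element $T_{s_{i}}$ is carried by $\alpha$ to $T_{s_{i+1}}$, then by $\varphi$ to $h_{s_{i+1}}-\Delta^{-2}(q-q^{-1})\sum_{c_{1}<c_{2}}F_{c_1}(w_{i+1})F_{c_2}(w_{i+2})$, and finally by $\beta^{-1}$ back to $h_{s_{i}}-\Delta^{-2}(q-q^{-1})\sum_{c_{1}<c_{2}}F_{c_1}(w_{i})F_{c_2}(w_{i+1})$, which is exactly $\phi_{1}(T_{s_{i}})$; here one uses that each $F_{c}$ is a polynomial with scalar coefficients and so is intertwined by the index shift. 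The computation on the $t_{j}$ and on $\psi_{1}$ is identical and even simpler. Since $\varphi$ and $\chi$ are mutually inverse isomorphisms by Proposition \ref{er-isomorphism7} and $\alpha,\beta$ are isomorphisms, the compositions $\phi_{1}$ and $\psi_{1}$ are mutually inverse isomorphisms, as claimed.

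The only genuine content is the relation-by-relation verification that the cyclic shift is an isomorphism of the two presentations; everything after that is formal. I expect that verification --- in particular matching the $i=0$ relations against the $i=1$ relations and confirming that the Vandermonde polynomial terms transform correctly --- to be the main (though entirely routine) obstacle, and it is precisely this step that allows one to quote Proposition \ref{er-isomorphism7} verbatim instead of repeating the Espinoza--Ryom-Hansen computation.
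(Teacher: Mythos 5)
Your proposal is correct and follows exactly the route the paper intends: the paper's entire proof is the remark that the result follows from Proposition \ref{er-isomorphism7}, and the cyclic index-shift isomorphisms $\alpha$ and $\beta$ you construct are precisely the implicit relabeling that makes that deduction work. Your version simply makes explicit the verification (including the wrap-around relations at $i=0$ and the behaviour of $e_{0}$ and the $F_{c}$ terms under the shift) that the paper leaves to the reader.
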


We further define the following two algebras.
\begin{definition}\label{Definitionisomoraddass-3-4}
We denote by $\mathcal{H}_{r,n}^{2}$ the subalgebra of $\widehat{H}_{r,n}^{\mathrm{aff}}$ generated by the elements $t_{1},\ldots, t_{n},$ $T_{s_{0}},T_{s_{2}}\ldots,$ $T_{s_{n-1}},$ whose explicit presentation is as follows:
\begin{align}
t_i^r&=1 \quad \qquad\qquad\qquad\qquad\mbox{for all $1\leq i\leq n$;}\label{rel-def-Heckealgaddcui1}\\[0.1em]
t_it_j&=t_jt_i \qquad\qquad\qquad\qquad\hspace{0.4mm}  \mbox{for all $1\leq i, j\leq n$;}\label{rel-def-Heckealgaddcui2}\\[0.1em]
T_{s_{0}}t_1&=t_{n}T_{s_{0}} \quad \quad\qquad\qquad\hspace{1mm}\label{rel-def-Heckealgaddcui3}\\[0.1em]
T_{s_{0}}t_n&=t_{1}T_{s_{0}} \quad \quad\qquad\qquad\hspace{1mm}\label{rel-def-Heckealgaddaddcui3}\\[0.1em]
T_{s_{0}}t_{k}&=t_{k}T_{s_{0}} \quad \qquad\qquad\qquad\hspace{1.5mm}\mbox{for all $2\leq k\leq n-1$;}\label{rel-def-Heckealgaddaddaddcui3}\\[0.1em]
T_{s_{i}}t_j&=t_{s_i(j)}T_{s_{i}} \quad \quad\qquad\qquad\hspace{1.2mm}\mbox{for all $2\leq i\leq n-1$ and $1\leq j\leq n$;}\label{rel-def-Heckealgassaddcui3}\\[0.1em]
T_{s_{i}}T_{s_{j}}&=T_{s_{j}}T_{s_{i}}\quad \quad\qquad\qquad\hspace{3.95mm}  \text{if $|i-j |> 1$ and $2\leq i, j\leq n-1$};\label{rel-def-Heckealgaddcui6}\\[0.1em]
T_{s_{i}}T_{s_{i+1}}T_{s_{i}}&=T_{s_{i+1}}T_{s_{i}}T_{s_{i+1}}\quad \quad\qquad \hspace{0.55mm}  \text{if $2\leq i\leq n-2$ and $n\geq 3$};\label{rel-def-Heckealgaddcui7}\\[0.1em]
T_{s_{0}}T_{s_{k}}&=T_{s_{k}}T_{s_{0}}\quad \quad\qquad\qquad\hspace{3.7mm}  \text{for all $2\leq k\leq n-2$};\label{reladd-def-Heckealgaddcuiass6}\\[0.1em]
T_{s_{0}}T_{s_{n-1}}T_{s_{0}}&=T_{s_{n-1}}T_{s_{0}}T_{s_{n-1}}\quad \quad\qquad \hspace{0.55mm}\label{relassadd-def-Heckealgaddcui7}\\[0.1em]
T_{s_{i}}^{2}&=1+(q-q^{-1})e_{i}T_{s_{i}} \quad \hspace{3.8mm}\mbox{for all $i=0$ and $2\leq i\leq n-1$,}\label{rel-def-Heckealgaddcui8}
\end{align}
where $t_{0} :=t_{n}$ and for each $i=0$ and $2\leq i\leq n-1$,
$$e_{i} :=\frac{1}{r}\sum\limits_{s=0}^{r-1}t_{i}^{s}t_{i+1}^{-s}.$$
\end{definition}

\begin{definition}\label{Definitionisomorphismaddcui-3-5}
We denote by $\mathcal{C}_{r,n}^{2}$ the subalgebra of $\widehat{\mathcal{C}}_{r,n}^{\mathrm{aff}}$ generated by the elements $w_{1},\ldots, w_{n},$ $h_{s_{0}}, h_{s_{2}}\ldots,$ $h_{s_{n-1}},$ whose explicit presentation is as follows:
\begin{align}
w_i^r&=1 \quad \qquad\qquad\qquad\qquad\mbox{for all $1\leq i\leq n$;}\label{athid-rel-def-Heckealgasscui1}\\[0.1em]
w_iw_j&=w_jw_i \qquad\qquad\qquad\quad\hspace{1.4mm}  \mbox{for all $1\leq i, j\leq n$;}\label{athid-rel-def-Heckealgasscui2}\\[0.1em]
h_{s_{i}}h_{s_{j}}&=h_{s_{j}}h_{s_{i}}\quad \quad\qquad\qquad\hspace{3.9mm}  \text{if $|i-j |> 1$ and $2\leq i, j\leq n-1$};\label{athid-rel-def-Heckealgasscui5}\\[0.1em]
h_{s_{i}}h_{s_{i+1}}h_{s_{i}}&=h_{s_{i+1}}h_{s_{i}}h_{s_{i+1}}\quad \quad\qquad \hspace{0.55mm}  \text{if $2\leq i\leq n-2$ and $n\geq 3$};\label{athid-rel-def-Heckealgasscui6}\\[0.1em]
h_{s_{0}}h_{s_{k}}&=h_{s_{k}}h_{s_{0}}\quad \quad\qquad\qquad\hspace{3.7mm}  \text{for all $2\leq k\leq n-2$};\label{athidaddcui-def-Heckealgaddcuiass6}\\[0.1em]
h_{s_{0}}h_{s_{n-1}}h_{s_{0}}&=h_{s_{n-1}}h_{s_{0}}h_{s_{n-1}}\quad \quad\qquad \hspace{0.55mm}\label{athidaddcui-def-Heckealgaddcui7}\\[0.1em]
h_{s_{i}}^{2}&=1+(q-q^{-1})h_{s_{i}} \quad \hspace{7mm}\mbox{for all $i=0$ or $2\leq i\leq n-1$;}\label{athid-rel-def-Heckealgasscuiiuc7}\\[0.1em]
h_{s_{i}}w_i=w_{i+1}h_{s_{i}}&-\Delta^{-2}\sum_{c_{1}< c_{2}}(\zeta^{c_2}-\zeta^{c_{1}})(q-q^{-1})F_{c_1}(w_{i})F_{c_2}(w_{i+1}) \hspace{2.5mm}\mbox{for $i=0$ or $2\leq i\leq n-1$;}\label{athid-rel-def-Heckealgassiuc8}\\[0.1em]
h_{s_{i}}w_{i+1}=w_{i}h_{s_{i}}&+\Delta^{-2}\sum_{c_{1}< c_{2}}(\zeta^{c_2}-\zeta^{c_{1}})(q-q^{-1})F_{c_1}(w_{i})F_{c_2}(w_{i+1}) \hspace{2.5mm}\mbox{for $i=0$ or $2\leq i\leq n-1$;}\label{athid-rel-def-Heckealgassiuc9}\\[0.1em]
h_{s_{i}}w_{l}&=w_{l}h_{s_{i}} \quad \qquad\qquad\qquad\hspace{2mm}\mbox{for all $l\not\equiv i, i+1$ (mod $n$) and $i=0$ or $2\leq i\leq n-1$,}\label{athid-rel-def-Heckealgassiuc10}
\end{align}
where $w_{0} :=w_{n}$ and in the expressions above, the sum is taken over all $1\leq c_{1}, c_{2}\leq r$ such that $c_{1}< c_{2}.$
\end{definition}

By using Proposition \ref{er-isomorphism7} again, we can easily get the following result.
\begin{proposition}\label{ERer-iucaddisomorphism8}
There is a canonical isomorphism between $\mathcal{H}_{r,n}^{2}$ and $\mathcal{C}_{r,n}^{2}$, which is explicitly described by $\phi_{2} :\mathcal{H}_{r,n}^{2}\rightarrow \mathcal{C}_{r,n}^{2}$ and $\psi_{2} :\mathcal{C}_{r,n}^{2}\rightarrow \mathcal{H}_{r,n}^{2}$, where $\phi_{2}$ is defined as follows$:$

\noindent for $1\leq j\leq n$,
\[\phi_{2}(t_{j})=w_{j},\]
\noindent for $i=0$ or $2\leq i\leq n-1$,
\[\phi_{2}(T_{s_{i}})=h_{s_{i}}-\Delta^{-2}(q-q^{-1})\sum_{c_{1}< c_{2}}F_{c_1}(w_{i})F_{c_2}(w_{i+1}),\]
\noindent and its inverse $\psi_{2}$ is defined as follows$:$

\noindent for $1\leq j\leq n$,
\[\psi_{2}(w_{j})=t_{j},\]
\noindent for $i=0$ or $2\leq i\leq n-1$,
\[\psi_{2}(h_{s_{i}})=T_{s_{i}}+\Delta^{-2}(q-q^{-1})\sum_{c_{1}< c_{2}}F_{c_1}(t_{i})F_{c_2}(t_{i+1}).\]
\end{proposition}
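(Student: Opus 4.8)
The plan is to realize $\mathcal{H}_{r,n}^{2}$ and $\mathcal{C}_{r,n}^{2}$ as inner conjugates of the Yokonuma-Hecke algebra $\mathcal{H}_{r,n}$ and the modified Ariki-Koike algebra $\mathcal{C}_{r,n}$, and then to transport the Espinoza--Ryom-Hansen isomorphism of Proposition \ref{er-isomorphism7} across these conjugations, so that no relation of $\mathcal{H}_{r,n}^{2}$ or $\mathcal{C}_{r,n}^{2}$ has to be checked by hand. Inside $\widehat{H}_{r,n}^{\mathrm{aff}}$ let $c$ denote the inner automorphism $x\mapsto T_{\rho}^{-1}xT_{\rho}$, and inside $\widehat{\mathcal{C}}_{r,n}^{\mathrm{aff}}$ let $d$ denote $y\mapsto h_{\rho}^{-1}yh_{\rho}$; both are algebra automorphisms since $T_{\rho}$ and $h_{\rho}$ are invertible. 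From relations \eqref{rel-def-Heckealg4} and \eqref{rel-def-Heckealg5} one reads off $c(t_{j})=t_{j+1}$ and $c(T_{s_{i}})=T_{s_{i+1}}$ with indices taken modulo $n$ (so $t_{n+1}=t_{1}$ and $T_{s_{n-1}}\mapsto T_{s_{0}}$), and relations \eqref{athid-rel-def-Heckealg3} and \eqref{athid-rel-def-Heckealg4} give the analogous $d(w_{j})=w_{j+1}$ and $d(h_{s_{i}})=h_{s_{i+1}}$.

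First I would observe that $c$ carries the generating set $\{t_{1},\ldots,t_{n},T_{s_{1}},\ldots,T_{s_{n-1}}\}$ of $\mathcal{H}_{r,n}$ bijectively onto $\{t_{1},\ldots,t_{n},T_{s_{2}},\ldots,T_{s_{n-1}},T_{s_{0}}\}$, which is exactly the generating set of $\mathcal{H}_{r,n}^{2}$ from Definition \ref{Definitionisomoraddass-3-4}; since $c$ is an automorphism of the ambient algebra it restricts to an algebra isomorphism $c\colon \mathcal{H}_{r,n}\overset{\sim}{\to}\mathcal{H}_{r,n}^{2}$. In the same way $d$ restricts to an isomorphism $d\colon \mathcal{C}_{r,n}\overset{\sim}{\to}\mathcal{C}_{r,n}^{2}$. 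Composing with the isomorphism $\varphi\colon \mathcal{H}_{r,n}\to\mathcal{C}_{r,n}$ of Proposition \ref{er-isomorphism7} then produces an algebra isomorphism
\[
d\circ\varphi\circ c^{-1}\colon \mathcal{H}_{r,n}^{2}\longrightarrow \mathcal{C}_{r,n}^{2},
\]
whose inverse is $c\circ\chi\circ d^{-1}$.

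It then remains only to verify that $d\circ\varphi\circ c^{-1}$ coincides with the map $\phi_{2}$ in the statement, which is a short computation on generators. For $t_{j}$ one has $c^{-1}(t_{j})=t_{j-1}\mapsto w_{j-1}\mapsto w_{j}$, so $\phi_{2}(t_{j})=w_{j}$. For a generator $T_{s_{i}}$ with $i=0$ or $2\leq i\leq n-1$, the map $c^{-1}$ sends it to $T_{s_{i-1}}$ (with $T_{s_{0}}\mapsto T_{s_{n-1}}$), then $\varphi$ gives $h_{s_{i-1}}-\Delta^{-2}(q-q^{-1})\sum_{c_{1}<c_{2}}F_{c_{1}}(w_{i-1})F_{c_{2}}(w_{i})$, and finally $d$ shifts every index up by one, yielding precisely $h_{s_{i}}-\Delta^{-2}(q-q^{-1})\sum_{c_{1}<c_{2}}F_{c_{1}}(w_{i})F_{c_{2}}(w_{i+1})$. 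The analogous check applied to $c\circ\chi\circ d^{-1}$ reproduces the stated formula for $\psi_{2}$, so these are the desired mutually inverse maps.

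The only point deserving care is the index arithmetic modulo $n$, and specifically the behaviour of the generator indexed by $s_{0}$: here the wrap-around identities $w_{0}=w_{n}$ and $w_{n+1}=w_{1}$ must be invoked, so that the term $F_{c_{1}}(w_{0})F_{c_{2}}(w_{1})=F_{c_{1}}(w_{n})F_{c_{2}}(w_{1})$ appearing in $\phi_{2}(T_{s_{0}})$ is correctly matched by $d\big(\varphi(T_{s_{n-1}})\big)$. I expect this bookkeeping to be the main (and essentially the only) obstacle; it is routine rather than a genuine difficulty, since the isomorphism property itself is inherited for free from Proposition \ref{er-isomorphism7} through the automorphisms $c$ and $d$.
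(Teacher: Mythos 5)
Your proposal is correct, and the index bookkeeping (including the wrap-around $w_{0}=w_{n}$ at $s_{0}$) checks out. The paper offers no argument beyond ``by using Proposition \ref{er-isomorphism7} again,'' and your conjugation by $T_{\rho}$ and $h_{\rho}$ --- which shifts the generating sets of $\mathcal{H}_{r,n}$ and $\mathcal{C}_{r,n}$ onto those of $\mathcal{H}_{r,n}^{2}$ and $\mathcal{C}_{r,n}^{2}$ and transports the Espinoza--Ryom-Hansen isomorphism for free --- is precisely the natural way to make that one-line justification rigorous, so this is essentially the paper's (implicit) approach with the details supplied.
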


{\it Proof of Theorem \ref{iwahori-matsumoto-pre-cuimaintheorem}} Combining Propositions \ref{er-isomorphism7}, \ref{ERer-isomorphism8} and \ref{ERer-iucaddisomorphism8}, we can see that $\phi$ preserves the relations involving the generators $t_{1},\ldots, t_{n}$ and $T_{s_{0}},T_{s_{1}}\ldots, T_{s_{n-1}}$ in Definition \ref{Definition 2-4}, and $\psi$ preserves the relations involving the generators $w_{1},\ldots, w_{n}$ and $h_{s_{0}},h_{s_{1}}\ldots, h_{s_{n-1}}$ in Definition \ref{Definition 3-1}, respectively.

By definition, it is obvious that $\phi$ preserves the relations involving the generators $t_{1},\ldots, t_{n}$ and $T_{\rho}^{\pm 1}$ in Definition \ref{Definition 2-4}, and $\psi$ preserves the relations involving the generators $w_{1},\ldots, w_{n}$ and $h_{\rho}^{\pm 1}$ in Definition \ref{Definition 3-1}, respectively. Finally, it suffices to verify that $\phi$ preserves the relation (\ref{rel-def-Heckealg5}) and $\psi$ preserves the relation (\ref{athid-rel-def-Heckealg4}), which follows easily from their definitions. $\hfill{} \Box$\\

Recall that $\widehat{W}$ is the extended affine Weyl group of type $A$ with generators $\rho$ and $s_{i}$ ($0\leq i\leq n-1$). For each $\widehat{w}\in \widehat{W}$, let $\widehat{w}=$ $\rho^{k}s_{i_1}\cdots s_{i_{r}}$ be a reduced expression of $\widehat{w}.$ From the relations \eqref{athid-rel-def-Heckealg4}-\eqref{athid-rel-def-Heckealg7}, we get that $h_{\widehat{w}} :=h_{\rho}^{k}h_{s_{i_1}}\cdots h_{s_{i_{r}}}$ is independent of the choice of the reduced expression of $\widehat{w},$ that is, it is well-defined.

By Theorem \ref{iwahori-matsumoto-pre-cuimaintheorem} and Proposition \ref{coxeter-lemma-iso-pbwnbasis-4}, we can easily get the following result.
\begin{proposition}\label{cui-lemma-iso-pbwnbasisicu}
$\widehat{\mathcal{C}}_{r,n}^{\mathrm{aff}}$ has an $R$-basis consisting of the following elements$:$
\begin{align}\label{cui-lemma-iso-pbwnbasis-8888}
\big\{t_{1}^{\alpha_{1}}\cdots t_{n}^{\alpha_{n}}h_{\widehat{w}}\:|\:0\leq\alpha_{1},\ldots,\alpha_{n}\leq r-1,~\widehat{w}\in \widehat{W}\big\}.
\end{align}
\end{proposition}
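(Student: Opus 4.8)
The plan is to transport the $R$-basis of $\widehat{H}_{r,n}^{\mathrm{aff}}$ furnished by Proposition \ref{coxeter-lemma-iso-pbwnbasis-4} across the isomorphism $\phi$ of Theorem \ref{iwahori-matsumoto-pre-cuimaintheorem} and then to correct it by a triangular change of basis. Since $\phi$ is an $R$-algebra isomorphism, it sends the basis $\{t_{1}^{\beta_{1}}\cdots t_{n}^{\beta_{n}}T_{\widehat{w}}\}$ of $\widehat{H}_{r,n}^{\mathrm{aff}}$ to an $R$-basis of $\widehat{\mathcal{C}}_{r,n}^{\mathrm{aff}}$; because $\phi(t_{j})=w_{j}$, this image is precisely $\{w^{\beta}\phi(T_{\widehat{w}})\}$, where I abbreviate $w^{\beta}:=w_{1}^{\beta_{1}}\cdots w_{n}^{\beta_{n}}$. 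It therefore remains only to compare $\phi(T_{\widehat{w}})$ with $h_{\widehat{w}}$.

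The heart of the argument is the claim that $\phi(T_{\widehat{w}})$ equals $h_{\widehat{w}}$ modulo terms of strictly smaller length. Set $P_{i}:=\Delta^{-2}(q-q^{-1})\sum_{c_{1}<c_{2}}F_{c_{1}}(w_{i})F_{c_{2}}(w_{i+1})$, which lies in the commutative subalgebra $\Omega$ generated by $w_{1},\ldots,w_{n}$, so that $\phi(T_{s_{i}})=h_{s_{i}}-P_{i}$ and $\phi(T_{\rho}^{\pm 1})=h_{\rho}^{\pm 1}$. For a reduced expression $\widehat{w}=\rho^{k}s_{i_{1}}\cdots s_{i_{\ell}}$ I would expand the product $\phi(T_{\widehat{w}})=h_{\rho}^{k}(h_{s_{i_{1}}}-P_{i_{1}})\cdots(h_{s_{i_{\ell}}}-P_{i_{\ell}})$. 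The leading term is $h_{\widehat{w}}$; every other term carries at least one factor $P_{i_{j}}\in\Omega$, and is hence missing the corresponding $h_{s_{i_{j}}}$. Pushing all such polynomial factors to the left through the surviving $h$'s by means of \eqref{athid-rel-def-Heckealg3} and \eqref{athid-rel-def-Heckealg8}--\eqref{athid-rel-def-Heckealg10} only permutes the $w$-variables and, each time it passes an $h_{s}$ through the relevant relation, either preserves that $h_{s}$ or deletes it; reducing the remaining (possibly non-reduced) $h$-word by \eqref{athid-rel-def-Heckealg7} cannot raise its length. Since at least one $h_{s_{i_{j}}}$ was removed at the outset, every resulting term has the shape $p_{\widehat{w}'}(w)\,h_{\widehat{w}'}$ with $p_{\widehat{w}'}(w)\in\Omega$ and $\ell(\widehat{w}')<\ell(\widehat{w})$, yielding
\[
\phi(T_{\widehat{w}})=h_{\widehat{w}}+\sum_{\ell(\widehat{w}')<\ell(\widehat{w})}p_{\widehat{w}'}(w)\,h_{\widehat{w}'}.
\]

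Granting this triangularity, the proposition follows formally. After reducing exponents modulo $r$ using \eqref{athid-rel-def-Heckealg1}, each $w^{\beta}\phi(T_{\widehat{w}})$ is an $\Omega$-combination of the elements $w^{\alpha}h_{\widehat{w}'}$ with $\ell(\widehat{w}')\le\ell(\widehat{w})$, with coefficient $1$ at the diagonal index $(\beta,\widehat{w})$; thus the passage between the known basis $\{w^{\beta}\phi(T_{\widehat{w}})\}$ and the proposed family $\{w^{\alpha}h_{\widehat{w}}\}$ is unitriangular with respect to the length filtration of $\widehat{W}$ and involves only finite sums. Spanning of $\{w^{\alpha}h_{\widehat{w}}\}$ is then immediate, and linear independence follows by descending induction on $\ell(\widehat{w})$ from the linear independence of $\{w^{\beta}\phi(T_{\widehat{w}})\}$; together these show that $\{w_{1}^{\alpha_{1}}\cdots w_{n}^{\alpha_{n}}h_{\widehat{w}}\}$ is an $R$-basis of $\widehat{\mathcal{C}}_{r,n}^{\mathrm{aff}}$.

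I expect the only real obstacle to be the bookkeeping in the middle step: one must verify carefully that commuting each correction polynomial $P_{i_{j}}$ leftward through the surviving $h$-factors never increases the $h$-length and always produces coefficients inside $\Omega$ with the asserted leading term. This depends on the precise form of relations \eqref{athid-rel-def-Heckealg8}--\eqref{athid-rel-def-Heckealg10} together with \eqref{athid-rel-def-Heckealg7}, and is the single place where a genuine, if routine, computation is needed.
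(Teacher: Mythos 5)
Your argument is correct and is essentially the paper's own proof: both rest on a unitriangular change of basis between $\{t_{1}^{\alpha_{1}}\cdots t_{n}^{\alpha_{n}}T_{\widehat{w}}\}$ and $\{w_{1}^{\alpha_{1}}\cdots w_{n}^{\alpha_{n}}h_{\widehat{w}}\}$ obtained from Theorem \ref{iwahori-matsumoto-pre-cuimaintheorem} together with the relations \eqref{athid-rel-def-Heckealg3} and \eqref{athid-rel-def-Heckealg8}--\eqref{athid-rel-def-Heckealg10}, combined with Proposition \ref{coxeter-lemma-iso-pbwnbasis-4}. The only cosmetic difference is direction: you expand $\phi(T_{\widehat{w}})$ as $h_{\widehat{w}}$ plus length-lower terms, whereas the paper applies $\psi$ to write $t^{\alpha}h_{\widehat{w}}=t^{\alpha}T_{\widehat{w}}+\sum_{\widehat{y}\prec\widehat{w}}t^{\beta}T_{\widehat{y}}$ and reads off independence from the known $T$-basis.
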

\begin{proof}
From the relations \eqref{athid-rel-def-Heckealg3} and \eqref{athid-rel-def-Heckealg8}-\eqref{athid-rel-def-Heckealg10}, we can see that $\widehat{\mathcal{C}}_{r,n}^{\mathrm{aff}}$ is generated over $R$ by the elements of the form $t_{1}^{\alpha_{1}}\cdots t_{n}^{\alpha_{n}}h_{\widehat{w}}$ with $0\leq\alpha_{i}\leq r-1$ and $\widehat{w}\in \widehat{W}$. Thus, it suffices to prove that these elements are linearly independent over $R.$ By \eqref{athid-rel-def-Heckealg3} and \eqref{athid-rel-def-Heckealg8}-\eqref{athid-rel-def-Heckealg10} again and the morphism $\psi$ defined in Theorem \ref{iwahori-matsumoto-pre-cuimaintheorem}, we can get that
\begin{align}\label{bruhatorder-relations}
t_{1}^{\alpha_{1}}\cdots t_{n}^{\alpha_{n}}h_{\widehat{w}}=t_{1}^{\alpha_{1}}\cdots t_{n}^{\alpha_{n}}T_{\widehat{w}}+\sum_{\substack{\widehat{y}\prec\widehat{w}\\0\leq\beta_{i}\leq r-1}}t_{1}^{\beta_{1}}\cdots t_{n}^{\beta_{n}}T_{\widehat{y}},
\end{align}
where $\prec$ is the Bruhat order on $\widehat{W}.$ By Proposition \ref{coxeter-lemma-iso-pbwnbasis-4}, the set $\{t_{1}^{\alpha_{1}}\cdots t_{n}^{\alpha_{n}}T_{\widehat{w}}\:|\:0\leq\alpha_{i}\leq r-1\text{ and }\widehat{w}\in \widehat{W}\}$ is linearly independent. By \eqref{bruhatorder-relations}, we then get the desired result.
\end{proof}

Let $\widehat{\mathcal{H}}_{n}^{\mathrm{aff}}$ be the $R$-subalgebra of $\widehat{\mathcal{C}}_{r,n}^{\mathrm{aff}}$ generated by the elements $h_{s_{0}},\ldots,h_{s_{n-1}},$ and $h_{\rho}^{\pm 1},$ which is canonically isomorphic to the extended affine Hecke algebra of type $A,$ and we shall not distinguish between them. Thus, from Theorems \ref{iwahori-matsumoto-pre} and \ref{iwahori-matsumoto-pre-cuimaintheorem} we immediately get the following result.
\begin{corollary}
The extended affine Hecke algebra $\widehat{\mathcal{H}}_{n}^{\mathrm{aff}}$ of type $A$ is a subalgebra of the affine Yokonuma-Hecke algebra $\widehat{Y}_{r,n}.$
\end{corollary}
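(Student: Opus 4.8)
The plan is to realize $\widehat{\mathcal{H}}_{n}^{\mathrm{aff}}$ as the image of a subalgebra of $\widehat{\mathcal{C}}_{r,n}^{\mathrm{aff}}$ under the composite of the two isomorphisms already at our disposal, so that no new computation is needed beyond what Theorems \ref{iwahori-matsumoto-pre} and \ref{iwahori-matsumoto-pre-cuimaintheorem} provide. First I would form the $R$-algebra homomorphism
\[
\Phi\circ\psi:\ \widehat{\mathcal{C}}_{r,n}^{\mathrm{aff}}\ \overset{\sim}{\longrightarrow}\ \widehat{Y}_{r,n},
\]
where $\psi:\widehat{\mathcal{C}}_{r,n}^{\mathrm{aff}}\to\widehat{H}_{r,n}^{\mathrm{aff}}$ is the inverse isomorphism of Theorem \ref{iwahori-matsumoto-pre-cuimaintheorem} and $\Phi:\widehat{H}_{r,n}^{\mathrm{aff}}\to\widehat{Y}_{r,n}$ is the isomorphism of Theorem \ref{iwahori-matsumoto-pre}, both read over the common base ring $R$ after the extension of scalars carried out at the start of this section. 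Being a composite of two $R$-algebra isomorphisms, $\Phi\circ\psi$ is itself an $R$-algebra isomorphism.

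Next I would invoke the defining description of $\widehat{\mathcal{H}}_{n}^{\mathrm{aff}}$: by construction it is the $R$-subalgebra of $\widehat{\mathcal{C}}_{r,n}^{\mathrm{aff}}$ generated by $h_{s_{0}},\ldots,h_{s_{n-1}}$ and $h_{\rho}^{\pm 1}$. Since $\Phi\circ\psi$ is in particular an injective $R$-algebra homomorphism, its restriction to the subalgebra $\widehat{\mathcal{H}}_{n}^{\mathrm{aff}}$ is again an injective homomorphism of $R$-algebras, and therefore carries $\widehat{\mathcal{H}}_{n}^{\mathrm{aff}}$ isomorphically onto the $R$-subalgebra $(\Phi\circ\psi)\big(\widehat{\mathcal{H}}_{n}^{\mathrm{aff}}\big)$ of $\widehat{Y}_{r,n}$. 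This already establishes the corollary, the only formal points being that the two isomorphisms are composable over $R$ and that the restriction of an isomorphism to a subalgebra is an isomorphism onto its image.

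To make the embedding concrete (and to exhibit why the result is a priori surprising), I would record the images of the generators under $\Phi\circ\psi$, using the explicit formulas for $\Phi$ and $\psi$:
\begin{align*}
(\Phi\circ\psi)(h_{s_{i}})&=g_{i}+\Delta^{-2}(q-q^{-1})\sum_{c_{1}<c_{2}}F_{c_{1}}(\bar{t}_{i})F_{c_{2}}(\bar{t}_{i+1})\qquad (1\leq i\leq n-1),\\
(\Phi\circ\psi)(h_{s_{0}})&=X_{1}^{-1}X_{n}(g_{n-1}\cdots g_{2}g_{1}g_{2}\cdots g_{n-1})^{-1}+\Delta^{-2}(q-q^{-1})\sum_{c_{1}<c_{2}}F_{c_{1}}(\bar{t}_{n})F_{c_{2}}(\bar{t}_{1}),\\
(\Phi\circ\psi)(h_{\rho})&=g_{n-1}\cdots g_{1}X_{1},\qquad (\Phi\circ\psi)(h_{\rho}^{-1})=X_{1}^{-1}g_{1}^{-1}\cdots g_{n-1}^{-1},
\end{align*}
so that the embedded copy of $\widehat{\mathcal{H}}_{n}^{\mathrm{aff}}$ is the $R$-subalgebra of $\widehat{Y}_{r,n}$ generated by these elements. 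I do not expect any genuine obstacle here: the entire content is already packaged in the two presentation isomorphisms, and the passage to the subalgebra is purely formal. The one point worth flagging is bookkeeping rather than mathematics, namely that Theorem \ref{iwahori-matsumoto-pre} must be used after extending scalars to $R$ so that it can be composed with Theorem \ref{iwahori-matsumoto-pre-cuimaintheorem}; the correction terms $\Delta^{-2}(q-q^{-1})\sum_{c_{1}<c_{2}}F_{c_{1}}F_{c_{2}}$ appearing above, which ``dress'' the braid generators $g_{i}$ by idempotent-type expressions in the $\bar{t}_{j}$'s, are exactly what allows the undeformed Hecke relations of $\widehat{\mathcal{H}}_{n}^{\mathrm{aff}}$ to be satisfied inside $\widehat{Y}_{r,n}$.
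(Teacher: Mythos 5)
Your proposal is correct and follows essentially the same route as the paper, which deduces the corollary immediately from Theorems \ref{iwahori-matsumoto-pre} and \ref{iwahori-matsumoto-pre-cuimaintheorem} by composing the two isomorphisms (over $R$) and restricting to the subalgebra $\widehat{\mathcal{H}}_{n}^{\mathrm{aff}}\subseteq\widehat{\mathcal{C}}_{r,n}^{\mathrm{aff}}$ generated by $h_{s_{0}},\ldots,h_{s_{n-1}},h_{\rho}^{\pm1}$. Your explicit formulas for the images of the generators under $\Phi\circ\psi$ are a correct and welcome addition, though not present in the paper's one-line proof.
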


\noindent{\bf Acknowledgements.}
The author was partially supported by the National Natural Science Foundation of China (No. 11601273).



\vskip0.4cm

School of Mathematics, Shandong University, Jinan, Shandong 250100, P.R. China.

\emph{E-mail address}: cwdeng@amss.ac.cn

\end{document}